\theoremstyle{plain}
\newtheorem{theorem}{Theorem}[section]
\newtheorem{proposition}[theorem]{Proposition}
\newtheorem{lemma}[theorem]{Lemma}
\newtheorem{definition}[theorem]{Definition}
\newtheorem{corollary}[theorem]{Corollary}
\newtheorem{remark}[theorem]{Remark}
\let\a\alpha
\let\b\beta
\let\e\epsilon
\let\d\delta
\let\ph\phi
\let\e\varepsilon
\let\l\lambda
\let\k\kappa
\let\ph\phi
\def\V{\mathcal V}
\def\O{\mathcal O}
\def\CF{\mathcal F}
\def\D{\Lambda}
\def\sD{\mathcal D}
\def\F{{\mathbb F}}
\def\Fq{{\F_{q}}}
\def\Z{{\mathbb Z}}
\def\Zp{{\Z_{p}}}
\def\Q{{\mathbb Q}}
\def\Qp{{\Q_p}}
\def\O{\mathcal O}
\def\Z{{\mathbf Z}}
\def\Zp{{\mathbf Z}_p}
\def\F{{\mathbf F}}
\def\Fq{{\mathbf F}_q}
\def\Z{{\bf Z}}
\def\Q{{\bf Q}}
\def\p{\pi}
\def\D{\Delta}
\def\a{\alpha}
\def\l{\lambda}
\def\b{\beta}
\def\cD{\mathcal D}
\def\S{\mathbf S}
\begin{document}

\title{{\bf Cartier operators  \\ 
on fields of  positive characteristic $p$  }}
\author{{ Sangtae Jeong}
\\Department of Mathematics, Inha University, Incheon, Korea
402-751}

\footnotetext{{\em Keywords}: Positive characteristic, Cartier operators,  Hasse derivatives, Carlitz linear polynomials, Shift operators,  Digit Cartier basis, Digit derivatives, Carlitz polynomials, Digit shifts, Wronskian \\

{\em Mathematics Subject Classification}  2000: 11S85 (11T06)

\noindent {\rm Email}:  stj@inha.ac.kr }

\bibliographystyle{alpha}

\maketitle

\begin{abstract}
From an analytical perspective, we introduce a sequence of Cartier operators that act on the field of formal Laurent series
in one variable with coefficients in a field of positive characteristic $p.$ In this work, we discover the binomial inversion formula between Hasse derivatives and Cartier operators, implying that Cartier operators can play a prominent role in  various objects of study in function field arithmetic, as suitable substitutes for higher derivatives. For an applicable object, the Wronskian criteria associated with Cartier operators are introduced.
These results stem from a careful study of two types of Cartier operators on the power series ring $\Fq[[T]]$ in one variable $T$ over a finite field $\Fq$ of $q$ elements.  Accordingly, we show that two sequences of Cartier operators are an orthonormal basis of the space of continuous $\Fq$-linear functions on $\Fq[[T]].$ According to the digit principle, every continuous function on $\Fq[[T]]$ is uniquely written in terms of a $q$-adic extension of Cartier operators, with a
closed-form of expansion coefficients for each of the two cases. Moreover, the $p$-adic analogues of Cartier operators are discussed as orthonormal bases for the space of continuous functions on $\Zp.$
\end{abstract}

\tableofcontents

\section{\bf Introduction}
A Cartier operator is of great importance in characteristic-$p$ algebraic geometry, which is a fundamental tool for working with K$\ddot{a}$hler differential forms in this geometry. It also plays a significant role in determining the criterion of algebraicity of power series over the field of rational functions in a field of characteristic $p>0$ as is shown in studies conducted by Christol \cite{Ch} and Sharif and Woodcock \cite{SW}. Current research pertaining to Cartier operators is of an algebraic and arithmetic nature. In this paper, from a purely analytical perspective, we consider two types of Cartier operators (or maps) which act on a non-Archimedean local field of any characteristic, while much emphasis is placed on the power series ring $\Fq[[T]]$ in one variable $T$ over a finite field $\Fq.$

The purpose of this work is to apply two types of Cartier maps to the characterization of continuous functions defined on the integer ring  of a non-Archimedean local field of any characteristic. Accordingly, we first proceed to deduce a fundamental relation of what is known as a binomial inversion formula between Cartier operators and higher derivatives (or Hasse derivatives) on $\Fq[[T]].$ This binomial inversion formula holds over a more general field of formal Laurent series in one variable over a (perfect) field of characteristic $p>0$, which shows that Cartier operators can play a role in various objects of study as a substitute for higher derivatives. For example, in Section 5, we present selected Wronskian criteria associated with two Cartier operators on more general fields, and these are parallel to the Wronskian criteria for the Hasse derivatives  in \cite{Sc} and \cite{GV}.
As compared with the known properties of Hasse derivatives whose $q$-adic extension is referred to as digit derivatives \cite{J1,J3,J4}, we show in Sections 2 and 3 that two sequences of Cartier operators are an orthonormal basis for the closed subspace $LC({\F}_q[[T]],{\F}_q((T)))$ of $\Fq$-linear continuous functions on ${\F}_q[[T]].$
Added to this fact, Conrad's digit principle \cite{Co2} enables us to prove that $q$-adic extensions of two Cartier operators are an orthonormal basis for the entire space, $C({\F}_q[[T]],{\F}_q((T)))$, of continuous functions on ${\F}_q[[T]]$ together with a closed-form expression for expansion coefficients for two respective bases. At the same time, by analogy with the classical case, we provide two orthonormal bases for the space $C(\Zp, \Qp)$ of continuous functions on the ring $\Z_p$ of $p$-adic integers, consisting of $p$-adic extensions of two types of Cartier maps on $\Zp$ with no closed-form formula for expansion coefficients.

\section{Orthonormal bases for $LC(R,K).$}
This section consists of three subsections. The first subsection is a quick review of orthonormal bases of
a certain Banach space over the integer ring of a local field of any characteristic, with  much emphasis on the positive
characteristic. For such a Banach space, we mainly consider $LC(R,K)$ (see notational exposition after Lemma \ref{ba}).
Two types of Cartier operators are the main object of study in the second subsection, forming an orthonormal basis
of the space $LC(R,K)$. In the last subsection, we observe that all known orthonormal bases of $LC(R,K)$ are essentially
equivalent.

\subsection{Known bases for the subspace on $\Fq[[T]]$}

Let $\V$ be a non-Archimedean local field of any characteristic, with an integer ring $\O$ and  a maximal ideal $M.$
In addition, let $\p$ be a uniformizer in $\V$  such that $M=(\p),$ and let $\F:=\O/M$ be
the residue field of order $q,$ and let $|?|_{\pi}$  be the (normalized) absolute value on $\V$ associated with the additive valuation ${v}_{\p}$ on $\V$ such that $ |x|_{\p} = q^{-{v}_{\p}(x)}$ for $x \in \V.$

The following are the cases for $(\V,\O,\p)$ which are of greatest interest in this study:

\indent (1) $ (\F_q((T)),\F_q[[T]], T),$ where $\F_q((T))$ is the field of formal Laurent series in one variable $T$ over a finite field $\Fq$ of $q$ elements, where $q=p^e$ is a power of a prime number $p$ and $\F_q[[T]]$ is the ring of the formal power series in $T$ over  $\Fq.$

\indent (2) $(\Q_p, \Z_p, p),$ where $\Q_p$ is the field of  $p$-adic numbers for a prime number $p$ and $\Zp$ is the ring of  $p$-adic integers.

\begin{definition}
Let $K$ be a non-Archimedean local field, and let $E$ be a $K$-Banach space equipped
with the usual sup-norm. We say that a sequence $\{f_n\}_{n\geq0}$
in $E$ is an orthonormal basis for $E$ if and only if the
following two conditions are satisfied:
\par
{\rm(1)} every $f \in E$ can be expanded uniquely as $f
=\sum_{n\geq0}a_{n}f_n$, with $a_{n} \in K \rightarrow 0$ as $n \rightarrow\infty.$
\par
{\rm(2)} The sup-norm of $f$ is given by $\|f\|
=\mbox{max}\{|a_{n}|\}.$
\end{definition}
For subsequent use we state a simple, yet useful criterion of an orthonormal basis for a $K$-Banach space $E$ which
follows immediately from Serre's criterion \cite[Lemme I]{Se}.
\begin{lemma}\label{ba}
Let $K$ be a non-Archimedean local field with a nontrivial
absolute value and let $E$ be a $K$-Banach space with an orthonormal
basis
 $\{e_{n}\}_{n\geq0}.$ If $f_n \in E$ with
 $\sup_{n\geq0} \| e_{n}-f_{n}\| <1 $
 then $\{f_n \}_{n\geq0}$ is an orthonormal basis of $E.$
\end{lemma}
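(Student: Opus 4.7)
The plan is to recognize the hypothesis as the exact condition needed to apply Serre's criterion after reducing modulo the maximal ideal, or equivalently, to exhibit a norm-preserving bijection $T$ of $E$ sending $e_n$ to $f_n$ and transport the basis property along it.

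First I would introduce the formal operator $T \colon E \to E$ given on basis vectors by $T e_n = f_n$, and written as $T = I + S$ where $S e_n = f_n - e_n$. Since $\{e_n\}$ is an orthonormal basis, every $v = \sum a_n e_n \in E$ has $a_n \to 0$ and the second axiom gives $\|v\| = \max_n |a_n|$. Setting $c := \sup_n \|e_n - f_n\| < 1$, the strong triangle inequality yields $\|Sv\| \le \max_n |a_n|\,\|f_n-e_n\| \le c\,\|v\|$, so $S$ extends to a continuous operator with $\|S\| \le c < 1$, $T$ is well-defined on all of $E$, and the Neumann series $T^{-1} = \sum_{k \ge 0}(-S)^k$ converges in the operator norm. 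Using the ultrametric property of that norm gives $\|T\| \le \max(1,c) = 1$ and $\|T^{-1}\| \le \max_{k\ge 0} \|S\|^k = 1$. The chain $\|v\| \le \|T\|\,\|T^{-1}v\| \le \|T^{-1}v\| \le \|T^{-1}\|\,\|v\| \le \|v\|$ then forces $\|T^{-1}v\| = \|v\|$, i.e.\ $T$ is an isometric automorphism of $E$.

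With this in hand both conclusions are immediate: given any $f \in E$, the (unique) expansion $T^{-1}f = \sum a_n e_n$ with $a_n \to 0$ pushes forward to $f = \sum a_n f_n$, uniqueness of the $a_n$ comes from injectivity of $T$, and the identity $\|f\| = \|T^{-1}f\| = \max_n |a_n|$ is precisely the second orthonormal basis axiom. As a by-product, $\|f_n\| = 1$ follows from applying the ultrametric inequality to $f_n = e_n + (f_n - e_n)$.

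Alternatively one can invoke the cited Serre criterion directly, which is the route flagged in the statement: the hypothesis $\|e_n - f_n\| < 1$ together with $\|e_n\| = 1$ forces $\|f_n\| = 1$ and $\overline{f_n} = \overline{e_n}$ in the residue space of $E$ (unit ball modulo open unit ball). Since $\{\overline{e_n}\}$ is a basis of that residue space over the residue field of $K$, so is $\{\overline{f_n}\}$, and Serre's criterion returns the conclusion with no further work. There is no real obstacle here; the only point to watch is the standard fact that in the ultrametric setting, perturbing a norm-$1$ vector by something strictly smaller leaves its residue class unchanged, which is exactly what makes the hypothesis $\sup_n\|e_n - f_n\| < 1$ the right one.
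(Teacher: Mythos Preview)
Your proposal is correct, and your second route via Serre's criterion is exactly what the paper has in mind: the paper's own proof is nothing more than a pointer to Serre's Lemme I (and to \cite[Lemma 3.2]{Co1} for an alternative), so your reduction-mod-maximal-ideal argument matches it precisely. Your first route, building the isometric automorphism $T=I+S$ via a Neumann series, is a clean self-contained alternative that avoids quoting Serre's lemma and makes the isometry explicit; this is essentially the content of the Conrad reference the paper cites.
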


\begin{proof}
See \cite[Lemma 3.2]{Co1} for an alternative proof.
\end{proof}

In what follows, let $R=\Fq[[T]]$ and $K=\Fq((T))$ and let
$C(R,K)$ denote the $K$-Banach space of all continuous functions $f$ : $R \rightarrow
{K}$ equipped with the sup-norm $\|f\| = \displaystyle \max_{x
\in R}\{|f(x)|\}.$ Unlike the classical case, $C(R,K)$ contains a subspace of continuous
${\F}_q$-linear functions from $R$ to $K$, which is denoted by $LC(R,K).$

We now provide a brief review of three sets of well-known  orthonormal bases for $E$ in the case where
$E=LC(R,K).$ First, the Hasse derivative $\{\cD_{n}\}_ {n\geq 0}$ on $R$ is a sequence of functions
defined by $$\cD_{n}(\sum_{i\geq0} x_{i}T^i) = \sum_{i\geq n} \binom{i}{n}x_{i}T^{i-n}.$$
As is shown in  \cite{V}, $\{\cD_{n}\}_{n\geq0}$ is a continuous
${\F}_q$-linear operator on $R$ and satisfies various properties for higher differentiation rules.
To recover the expansion coefficients let us recall the Carlitz difference operators $\{{\D}^{(n)}\}_{n\geq 0}$ on $LC(R,K),$ which are defined
recursively by
\begin{eqnarray*}\label{do}
({\D}^{(n)}f)(x)= {\D}^{(n-1)}f(Tx)-T^{q^{n-1}}{\D}^{(n-1)}f(x)
 (n \geq 1); {\D}^{0} =id.
\end{eqnarray*}
For simplicity, $\D$ and $\D^{n}$ denote $\D^{(1)}$ and the $n$th iterate of $\D$, respectively.

\begin{theorem}\label{Hre}
(1) $\{\cD_{n}\}_ {n\geq 0}$ is an orthonormal basis for  $LC(R,K)$.

(2) Write $f =\sum_{n=0}^{\infty}b_{n}\cD_{n}\in LC(R,K).$ Then, the coefficients can be
recovered by iterating the Carlitz difference operator ${\D}$:
\[
b_{n} =({\D}^{n}f)(1) =\sum_{i=0}^{n}(-1)^{n-i}f(T^i)\cD_{i}(T^n).
\]
\end{theorem}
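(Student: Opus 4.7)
My plan is to handle the two parts separately, using Serre's criterion (Lemma \ref{ba}) for (1) and an explicit ``lowering'' relation between $\D$ and $\cD_n$ for (2). For (1), I will invoke Lemma \ref{ba} with the reference orthonormal basis of $LC(R,K)$ given by the coordinate projections $c_n: \sum_{i\ge 0} x_i T^i \mapsto x_n$; these are visibly $\Fq$-linear and continuous, and every $f \in LC(R,K)$ admits the uniformly convergent expansion $f = \sum_n f(T^n)\, c_n$ with $f(T^n) \to 0$, so $\{c_n\}_{n \ge 0}$ is an orthonormal basis. Since $\cD_n - c_n$ is again $\Fq$-linear and continuous, its sup-norm reduces to $\sup_i |(\cD_n - c_n)(T^i)|$; using $c_n(T^i) = \delta_{i,n}$ and $\cD_n(T^i) = \binom{i}{n} T^{i-n}$, the difference vanishes when $i \le n$ and lies in $TR$ when $i > n$, so $\|\cD_n - c_n\| \le q^{-1} < 1$ and Lemma \ref{ba} applies.

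For (2), the core identity I will establish is the lowering relation $\D \cD_m = \cD_{m-1}$ for $m \ge 1$ (with $\D \cD_0 = 0$), verified by a direct index-shift computation starting from $\D f(x) = f(Tx) - Tf(x)$ and the definition of $\cD_m$. Iterating gives $\D^n \cD_m = \cD_{m-n}$ if $m \ge n$ and $0$ otherwise; combined with $\cD_k(1) = \delta_{k,0}$ this yields $(\D^n \cD_m)(1) = \delta_{m,n}$. Since $\D$ is bounded on $LC(R,K)$ (in fact $\|\D\| \le 1$ by the ultrametric inequality), it commutes with the uniformly convergent series $f = \sum_m b_m \cD_m$, and evaluating at $1$ gives $(\D^n f)(1) = b_n$.

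For the closed form, I will introduce the shift $Sf(x) := f(Tx)$ and multiplication $Mf := Tf$; both are $\Fq$-linear endomorphisms of $LC(R,K)$ that commute because $T \in K$ is a scalar. Since $\D = S - M$, the binomial theorem gives $\D^n = \sum_{i=0}^n (-1)^{n-i} \binom{n}{i} M^{n-i} S^i$, and evaluating at $x = 1$ yields
\[
(\D^n f)(1) = \sum_{i=0}^n (-1)^{n-i} \binom{n}{i}\, T^{n-i}\, f(T^i) = \sum_{i=0}^n (-1)^{n-i}\, f(T^i)\, \cD_i(T^n),
\]
using $\cD_i(T^n) = \binom{n}{i} T^{n-i}$. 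The only real technical point is the careful index bookkeeping in verifying $\D \cD_m = \cD_{m-1}$ and justifying that $\D^n$ passes through the infinite series, but both are routine given the continuity of $\D$ and the sup-norm structure of $LC(R,K)$.
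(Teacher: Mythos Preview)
Your proof is correct. The paper itself does not give a proof of this theorem, only the reference ``See \cite{J1,J3} or \cite{Sn}'', so there is no in-paper argument to compare against; your approach via Lemma~\ref{ba} for part~(1) and the lowering relation $\D\cD_m=\cD_{m-1}$ for part~(2) is exactly the method used in those cited works and is in line with the paper's own style (cf.\ the second proof of Theorem~\ref{pnrep}, which similarly verifies that the reductions modulo $T$ form the dual basis to $1,T,\ldots,T^{n-1}$, i.e.\ your projections $c_n$).
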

\begin{proof}
See \cite{J1, J3} or \cite{Sn}.
\end{proof}

Secondly, we define the Carlitz $\Fq$-linear polynomial $\{E_n\}_{n\geq0},$
which is given by
$$E_n(x)=e_n(x)/F_n~(n\geq1)~~{\rm and}~~ E_0(x) =x,$$
where  for $n\geq1,$ $$e_{n}(x) =
\prod_{\scriptsize{\begin{array}{l}
                 \a \in {\F}_{q}[T] \nonumber \\
                 \mbox{deg}(\a) < n \nonumber
                   \end{array}}}(x-\a) $$
and
 $$F_{n} =[n]{[n-1]}^q \cdots {[1]}^{q^{n-1}};~ L_{n}
=[n][n-1]\cdots [1] (n>0);~F_0=L_0=1,$$
where $[n] =T^{q^n} -T(n>0).$


\begin{theorem}\label{Wre}
(1) $\{E_n(x)\}_{n\geq0}$ is an orthonormal basis for  $LC({R},K)$.

(2) Write $f =\sum_{n\geq0}a_{n}E_{n}(x)\in LC(R,K)$. Then the coefficients can be
recovered by the formula:
\begin{eqnarray}\label{carcoeff}
a_{n} =({\D}^{(n)}f)(1)=\sum_{i=0}^n C_i f(T^i),
\end{eqnarray}
where \begin{eqnarray}\label{valCn}
C_n=1;~~~~ C_{i}=(-1)^{n-i}\sum_{e \in S_i}T^e ~~(0\leq i< n),
 \end{eqnarray}
where $S_i$ is the set of all sums of distinct elements of $\{1, q \cdots, q^{n-1}\}$ taken $n-i$ at a time.
\end{theorem}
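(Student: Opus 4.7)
The strategy relies on two facts tightly linked to the dense sequence $\{T^i\}_{i\geq 0}$ in $R = \Fq[[T]]$. For Part (1), I would reduce orthonormality to the observation that the array $(E_n(T^i))$ is lower-triangular with unit diagonal, once we prove that $E_n(T^i) = 0$ for $i < n$, $E_n(T^n) = 1$, and $|E_n(T^i)|_\pi \leq 1$ for all $i$. For Part (2), I would identify $\D^{(n)}$ as an almost-diagonal operator in the $E_m$-basis by proving $(\D^{(n)} E_m)(1) = \delta_{m,n}$, so that applying it to $f = \sum_m a_m E_m$ reads off coefficients.

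The three key properties are verified as follows. (a) $E_n(T^i) = 0$ for $i < n$ because $T^i$ is one of the degree-$<n$ factors of $e_n$. (b) $E_n(T^n) = 1$ is the Carlitz identity $e_n(T^n) = F_n$, which I would prove inductively from $e_{n+1}(x) = \prod_{c\in\Fq}(e_n(x) - cF_n) = e_n(x)^q - F_n^{q-1}e_n(x)$. (c) Integrality $|E_n(T^i)|_\pi \leq 1$ is obtained from the recursion $E_{n+1}(x) = (E_n(x)^q - E_n(x))/[n+1]$: if $E_n(x) \in R$ reduces to $c \in \Fq$ mod $M$, the factor $E_n(x) - c$ of $E_n(x)^q - E_n(x) = \prod_{c'\in\Fq}(E_n(x) - c')$ lies in $M$, matching $v_\pi([n+1]) = 1$. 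Using (a)--(c), Part (1) is direct: set $a_0 = f(1)$ and recursively $a_n = f(T^n) - \sum_{i<n} a_i E_i(T^n)$; triangularity yields $|a_n| \leq \max_{0 \leq i \leq n}|f(T^i)| \to 0$ by continuity of $f$. $\Fq$-linearity then upgrades the pointwise identity $f(T^i) = \sum_n a_n E_n(T^i)$ to $f = \sum_n a_n E_n$ globally, with $\|f\| = \sup_n |a_n|$.

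For Part (2), the calculation $\D E_1(x) = x^q = E_0(x)^q$ together with the recursion $E_m(x) = (E_{m-1}(x)^q - E_{m-1}(x))/[m]$ imply $\D E_m = E_{m-1}^q$ for all $m \geq 1$. Iterating the defining relation $\D^{(n)} = (\sigma - T^{q^{n-1}}) \D^{(n-1)}$, where $\sigma f(x) = f(Tx)$, extends this to $\D^{(n)} E_m = E_{m-n}^{q^n}$ for $m \geq n$ and $0$ otherwise, so $(\D^{(n)} E_m)(1) = \delta_{m,n}$ using $E_0(1) = 1$ and $E_k(1) = 0$ for $k \geq 1$. The bound $\|\D^{(n)} f\| \leq \|f\|$, which follows inductively from $|T|<1$, permits interchanging $\D^{(n)}$ with the convergent series, giving $a_n = (\D^{(n)} f)(1)$. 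The closed form follows by unwinding the same iteration as $\D^{(n)} = \prod_{j=0}^{n-1}(\sigma - T^{q^j})$ and expanding in the elementary symmetric polynomials of $\{T, T^q, \ldots, T^{q^{n-1}}\}$; evaluating at $x=1$ reads off the coefficient of $f(T^i)$ as $(-1)^{n-i} e_{n-i}(T, T^q, \ldots, T^{q^{n-1}})$, matching the stated $C_i$. The main obstacle I anticipate is cleanly packaging the two Carlitz identities $e_n(T^n) = F_n$ and $\D E_m = E_{m-1}^q$, which together carry the combinatorial content of the theorem and both reduce to the recursion for $e_{n+1}$ (equivalently for $E_{n+1}$).
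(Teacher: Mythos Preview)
The paper itself does not prove this theorem; it only cites Wagner, Conrad, and the author's earlier work. Your outline is essentially the classical Wagner--Carlitz argument and is correct in spirit, with one genuine slip in Part~(1).

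The gap is your convergence step: you write ``triangularity yields $|a_n|\le \max_{0\le i\le n}|f(T^i)|\to 0$ by continuity of $f$.'' The bound $|a_n|\le \max_{0\le i\le n}|f(T^i)|$ is fine, but that maximum is \emph{non-decreasing} in $n$ and therefore cannot tend to $0$ unless $f\equiv 0$. What is true is only that $|f(T^n)|\to 0$, which is not what you are using. Two clean repairs are available with the ingredients you already have:
\begin{itemize}
\item[(i)] Use Serre's criterion (Lemma~\ref{ba} in the paper). Your properties (a)--(c) show that $E_n:R\to R$ and that, modulo $T$, the functions $\overline{E_n}$ satisfy $\overline{E_n}(T^i)=\delta_{n,i}$; hence they form an $\Fq$-basis of $LC(R,\Fq)$. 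Serre's criterion then gives orthonormality directly, with no separate convergence argument needed. This is exactly how the paper handles the parallel bases $\phi_n$ and $\cD_n$.
\item[(ii)] Alternatively, prove the closed form $a_n=\sum_{i=0}^n C_i f(T^i)$ first (your Part~(2) argument does not actually require Part~(1)), and then read off $a_n\to 0$ from $|C_i|\le |T|^{1+q+\cdots+q^{n-i-1}}$ for $i<n$ together with $|f(T^n)|\to 0$: split the sum at a fixed $N$ with $|f(T^i)|<\varepsilon$ for $i\ge N$, and let the $C_i$-factor kill the finitely many early terms.
\end{itemize}

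Your Part~(2) is correct. The identity $\Delta E_m=E_{m-1}^q$ does follow from the recursion $E_{m}(x)=(E_{m-1}(x)^q-E_{m-1}(x))/[m]$ together with $F_m=[m]F_{m-1}^q$, and the commutativity of $\sigma$ with scalar multiplication by $T^{q^j}$ (which you implicitly use to expand $\prod_{j=0}^{n-1}(\sigma-T^{q^j})$) is immediate since $(\sigma(T^{q^j}f))(x)=T^{q^j}f(Tx)=(T^{q^j}\sigma f)(x)$. The identification of $C_i$ with $(-1)^{n-i}e_{n-i}(T,T^q,\ldots,T^{q^{n-1}})$ is exactly the statement in~(\ref{valCn}).
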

\begin{proof}
See \cite{W2,W1} or \cite{Co2} and \cite{J1, J3}.
\end{proof}
We point out here that the coefficients in (\ref{carcoeff}) are also recovered by the formula
\begin{eqnarray}\label{carcof2}
a_{n} =\sum_{i=0}^{n}\sum_{r=i}^{n}\binom{r}{i}(-T)^{r-i}A_{n,r}f(T^i),
\end{eqnarray}
where $A_{n,1} =(-1)^{n-1}L_{n-1}$ and for $ r>1$,
\begin{eqnarray*}\label{Anr}
A_{n,r} &=& (-1)^{n+r}L_{n-1} \sum_{0 < j_1 \cdots <j_{r-1}
<n}\frac{1}{[j_1] [j_2]\cdots [j_{r-1}]},
\end{eqnarray*}
with the convention that
$A_{0,0}=1$ and $A_{n,0}=1$ for $n>0.$
Indeed the formula in (\ref{carcof2}) follows from the identity ${\D}^{(n)} =\sum_{r=0}^n A_{n,r}\D^r$
in \cite[Proposition 3]{J2} and the formula (2) in Theorem \ref{Hre}.
By comparing two formulas for $a_n$ in (\ref{carcoeff}) and (\ref{carcof2})
the combinatorial sum in (\ref{valCn}), for all $0 \leq i \leq n,$ is given by the formula
$$ \sum_{e \in S_i}T^e = \sum_{r=i}^{n}(-1)^{n-r}\binom{r}{i}T^{r-i}A_{n,r}.$$


Finally, as an $\Fq$-linear operator on $R,$ the shift map $\{ \S^{(n)}\}_{n\geq 0}$ is defined by
\begin{eqnarray*}\label{sn}
\S^{(n)}(\sum_{i \geq0} x_{i}T^i ) &=& \sum_{i\geq n} x_{i}T^{i-n}.
\end{eqnarray*}

\begin{theorem}\label{snrep}
(1) $\{ \S^{(n)} \}_ {n\geq 0}$
is an orthonormal basis for  $LC(R,K).$

(2) Write $f=\sum_{n \geq0}c_n \S^{(n)}(x)\in LC(R,K).$ Then the coefficients $c_n$ are given by
the formula:
\begin{eqnarray*}\label{cnshift}
c_0 &= & f(1); \nonumber \\
c_n&= & f(T^n) -Tf(T^{n-1})~(n\geq1).
\end{eqnarray*}
\end{theorem}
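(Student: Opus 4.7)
The plan is to derive both parts by comparing the shifts $\S^{(n)}$ against the Hasse basis $\{\cD_n\}_{n\geq 0}$ from Theorem \ref{Hre}, invoking the perturbation criterion of Lemma \ref{ba}, and then recovering the coefficients by a short telescoping identity.

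For part (1), I would first observe that for every $x = \sum_{i\geq 0} x_i T^i \in R$,
\[
(\cD_n - \S^{(n)})(x) = \sum_{i \geq n} \Bigl(\binom{i}{n} - 1\Bigr) x_i\, T^{i-n}.
\]
The $i = n$ contribution vanishes since $\binom{n}{n} = 1$, so the image of $\cD_n - \S^{(n)}$ lies in the maximal ideal $TR$. Consequently $|(\cD_n - \S^{(n)})(x)| \leq q^{-1}$ for every $x \in R$, and therefore $\|\cD_n - \S^{(n)}\| \leq q^{-1} < 1$ uniformly in $n$. Since $\{\cD_n\}_{n\geq 0}$ is an orthonormal basis of $LC(R,K)$ by Theorem \ref{Hre}, Lemma \ref{ba} immediately promotes $\{\S^{(n)}\}_{n\geq 0}$ to an orthonormal basis as well.

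For part (2), part (1) tells me that every $f \in LC(R,K)$ admits a unique expansion $f = \sum_{n\geq 0} c_n \S^{(n)}$ with $c_n \in K$ tending to $0$; by the orthonormal basis property the convergence is uniform on $R$, hence pointwise. Noting that $\S^{(n)}(T^k) = T^{k-n}$ if $n \leq k$ and vanishes otherwise, evaluation at $x = T^k$ gives
\[
f(T^k) = \sum_{n=0}^{k} c_n\, T^{k-n}.
\]
Setting $k=0$ yields $c_0 = f(1)$, and for $k \geq 1$ the difference
\[
f(T^k) - T f(T^{k-1}) = \sum_{n=0}^{k} c_n\, T^{k-n} - \sum_{n=0}^{k-1} c_n\, T^{k-n} = c_k
\]
produces the advertised recovery formula.

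There is essentially no genuine obstacle here: the only substantive step is the small observation that the leading $i=n$ term of $\cD_n - \S^{(n)}$ cancels so that the difference lands in $TR$; the remainder of the argument is a direct application of Lemma \ref{ba} together with an elementary telescoping evaluation at the powers $T^k$.
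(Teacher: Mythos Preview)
Your argument is correct in both parts. The paper itself does not prove this theorem in-text but simply cites \cite{J6}; your approach---showing $\|\cD_n - \S^{(n)}\| \leq q^{-1} < 1$ because the $i=n$ term cancels, then invoking Lemma~\ref{ba} against the known orthonormal basis $\{\cD_n\}$ of Theorem~\ref{Hre}, followed by the telescoping evaluation at $T^k$---is precisely the style of argument the paper employs for the analogous results (compare the first proof of Theorem~\ref{car2} and the coefficient recovery in Theorem~\ref{pnrep}). So while there is no in-paper proof to match against, your method is both valid and idiomatic for this setting.
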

\begin{proof}
See \cite{J6}.
\end{proof}

\subsection{Cartier operators on $\Fq[[T]]$}
We begin by introducing the Cartier operator on $\Fq[[T]]$ that is the main object of this study.
\begin{definition}
For $m$ and $r$ integers such that  $0 \leq r <q^m,$ the Cartier operator $\D_{r, m}$ on $\Fq[[T]]$ is defined by
 $$ \D_{r,m}(\sum_{n\geq0} x_nT^n) = \sum_{n\geq0}x_{nq^m+r}T^n.$$
\end{definition}
Observe that $\D_{r, m}$ is a complete generalization of $\D_{r,1},$ which is defined in \cite{AS}.
Note also that $\D_{r,m}$ is  an $\Fq$-linear operator on  $\Fq[[T]]$ and that
it can be defined, for a monomial $T^n(n\geq 0),$ by
\begin{equation}\label{directdef}
\D_{r,m}(T^n) = \left \{ \begin{array}{ll}
    T^l & \mbox{if}~~ n=lq^m+r
\\ 0   & \mbox{otherwise}
 \end{array}
 \right.
\end{equation}
and then it can be extended to $\Fq[[T]]$ by $\Fq$-linearity.


The relevant basic properties of the operators $\D_{r,m}$ are stated as follows.

\begin{lemma}\label{basic}
For  $x , y \in \Fq[[T]],$
\begin{itemize}
\item[] {\rm (1)} $ x =\sum_{r=0}^{q^m-1}T^r\D_{r,m}^{q^m}(x).$
\item[] {\rm (2)} $ \D_{r,m}(x^{q^m} y)=x\D_{r,m}(y).$
\item[] {\rm (3)} For all integers $s\geq1$ such that $r+s<q^m,$
    $$\D_{r,m}(x)=\D_{r+s,m}(T^sx).$$
\end{itemize}
\end{lemma}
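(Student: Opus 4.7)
My plan is to verify each of the three identities by exploiting the action of $\D_{r,m}$ on monomials (equation (\ref{directdef})), reducing everything to a combinatorial check on exponents modulo $q^m$. Throughout, continuity/$\Fq$-linearity lets me extend from monomials to general power series without trouble.

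For part (1), I first interpret $\D_{r,m}^{q^m}(x)$ as the $q^m$-th power of the element $\D_{r,m}(x)$ in $\Fq[[T]]$ (not iterated composition). The key observation is that every integer $n\geq 0$ has a unique representation $n=lq^m+r$ with $0\leq r<q^m$, so any $x=\sum_{n\geq0} x_nT^n$ decomposes as
\[
x=\sum_{r=0}^{q^m-1} T^r\sum_{l\geq 0} x_{lq^m+r}T^{lq^m}.
\]
Since the coefficients $x_n$ lie in $\Fq$, they satisfy $x_n^{q^m}=x_n$, and because $t\mapsto t^{q^m}$ is an additive (Frobenius) endomorphism of $\Fq[[T]]$, we have $(\D_{r,m}(x))^{q^m}=\sum_l x_{lq^m+r}T^{lq^m}$, which matches the inner sum. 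Summing over $r$ yields the identity.

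For part (2), both sides are additive in $x$ (using $(x_1+x_2)^{q^m}=x_1^{q^m}+x_2^{q^m}$) and in $y$, and both are continuous in the $T$-adic topology, so by density I reduce to $x=T^a$, $y=T^b$. Writing $b=lq^m+r'$ with $0\leq r'<q^m$, I get $x^{q^m}y=T^{aq^m+b}=T^{(a+l)q^m+r'}$, and applying (\ref{directdef}) gives $T^{a+l}$ precisely when $r'=r$, which is exactly $T^a\D_{r,m}(T^b)$.

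For part (3), the strategy is again to test on $x=T^n$, write $n=lq^m+r'$ with $0\leq r'<q^m$, and compare residues of $n$ and $n+s$ modulo $q^m$. If $r'=r$, then the hypothesis $r+s<q^m$ ensures no carry occurs, so $n+s=lq^m+(r+s)$ and both sides equal $T^l$. If $r'\neq r$, the left side is $0$, and a short case analysis on whether $r'+s<q^m$ or $r'+s\geq q^m$ shows that the residue of $n+s$ modulo $q^m$ is never $r+s$, so the right side also vanishes. This case split involving the possible carry when adding $s$ is the one genuinely fiddly point, but the hypothesis $r+s<q^m$ is exactly what is needed to rule out the bad case when $r'=r$.
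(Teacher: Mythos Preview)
Your proof is correct and proceeds along the same elementary lines as the paper. Part (1) is identical to the paper's argument. For (2), the paper expands $x^{q^m}y$ as a double sum in general coefficients and applies $\D_{r,m}$ directly, whereas you reduce to monomials via biadditivity and continuity; for (3), the paper obtains the general case by iterating the special case $\D_{r,m}(x)=\D_{r+1,m}(Tx)$, while you give a one-shot case analysis on $T^n$. These are minor variations in execution rather than genuinely different approaches, and your monomial reduction is arguably a bit cleaner.
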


\begin{proof}
The proofs of parts (1) and (2) follow from those of the case $\D_{r,1}$ in \cite[Lemma 12.2.2]{AS}. For completeness, we include a proof here.
For (1), we have
\begin{eqnarray*}
 x &=& \sum_{n\geq 0}x_nT^n =\sum_{r=0}^{q^m -1}\sum_{n\geq0}x_{nq^m +r}T^{nq^m +r}\\
 &=& \sum_{r=0}^{q^m -1}T^r\left(  \sum_{n\geq0}x_{nq^m +r}T^{n} \right)^{q^m}= \sum_{r=0}^{q^m -1}T^r\D_{r,m}^{q^m}(x).
\end{eqnarray*}
For (2), we have
\begin{eqnarray*}
x^{q^m}y &=& (\sum_{i\geq0}x_iT^i)^{q^m} \sum_{j\geq0} y_jT^j=\sum_{i\geq0}x_iT^{iq^m} \sum_{j\geq0}^{\infty} y_jT^j= \sum_{n\geq0} T^{n}\left(\sum_{\substack{i,j\geq0 \\iq^m +j=n}}x_{i}y_j\right).
\end{eqnarray*}
Hence,
\begin{eqnarray*}
\D_{r,m}(x^{q^m} y) &=&\sum_{n\geq0}T^{n}\left( \sum_{\substack{i,j\geq0 \\iq^m +j=nq^m +r}}x_iy_{j}\right)
=\sum_{n\geq0} T^{n}\left( \sum_{0\leq i \leq n}x_iy_{(n-i)q^m +r}\right) \\
&=&\left( \sum_{i\geq0} x_iT^{i}\right) \left( \sum_{n \geq i}y_{(n-i)q^m +r}T^{n-i}\right) = x\D_{r,m}(y).
\end{eqnarray*}

Part (3) follows from the repeated application of the identity $\D_{r,m}(x) =\D_{r+1,m}(Tx),$ which is obtained
from (\ref{directdef}).
\end{proof}

The following result shows that $\D_{r,m}$ satisfies the product formula.
\begin{lemma}\label{prodf}
For  $x , y \in \Fq[[T]],$
$$\D_{r,m}(xy) = \sum_{i+j=r}\D_{i,m}(x) \D_{j,m}(y) + T\sum_{i+j=q^m+ r}\D_{i,m}(x) \D_{j,m}(y).$$
In particular,
$$\D_{q^m -1,m}(xy) = \sum_{i+j=q^m -1}\D_{i,m}(x) \D_{j,m}(y).$$
\end{lemma}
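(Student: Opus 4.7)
The plan is to reduce the product formula to a direct computation via the decomposition in Lemma \ref{basic}(1) combined with the ``Frobenius pullback'' identity in Lemma \ref{basic}(2). Concretely, I would first write both $x$ and $y$ in their $q^m$-digit expansions
\[
x = \sum_{i=0}^{q^m-1} T^i (\D_{i,m}(x))^{q^m}, \qquad y = \sum_{j=0}^{q^m-1} T^j (\D_{j,m}(y))^{q^m},
\]
multiply out to obtain
\[
xy = \sum_{i=0}^{q^m-1}\sum_{j=0}^{q^m-1} T^{i+j} \bigl(\D_{i,m}(x)\D_{j,m}(y)\bigr)^{q^m},
\]
and then apply $\D_{r,m}$ term-by-term, using linearity.

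Next, invoking Lemma \ref{basic}(2) with the $q^m$-th power factor $\D_{i,m}(x)\D_{j,m}(y)$ pulled outside, each summand becomes
\[
\D_{r,m}\!\left(T^{i+j}\bigl(\D_{i,m}(x)\D_{j,m}(y)\bigr)^{q^m}\right) = \D_{i,m}(x)\D_{j,m}(y)\cdot \D_{r,m}(T^{i+j}).
\]
The problem therefore reduces to evaluating $\D_{r,m}(T^{i+j})$ for $0\le i,j\le q^m-1$, i.e.\ for $0\le i+j\le 2q^m-2$. By the monomial definition (\ref{directdef}), $\D_{r,m}(T^{i+j})$ is nonzero only in two cases: either $i+j=r$ (so that $i+j = 0\cdot q^m + r$ and the value is $1$), or $i+j = r+q^m$ (so that $i+j = 1\cdot q^m + r$ and the value is $T$). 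Collecting contributions from these two index ranges yields exactly
\[
\D_{r,m}(xy) = \sum_{i+j=r}\D_{i,m}(x)\D_{j,m}(y) + T\sum_{i+j=q^m+r}\D_{i,m}(x)\D_{j,m}(y),
\]
which is the asserted formula.

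For the ``In particular'' statement with $r = q^m-1$, the second sum requires $i+j = 2q^m-1$, but since $i,j\le q^m-1$ we have $i+j\le 2q^m-2$, so that index set is empty and the $T$-term vanishes, leaving only the first sum.

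I do not anticipate a real obstacle here; the argument is essentially a formal manipulation once one combines (1) and (2). The only point requiring some care is the bookkeeping of which pairs $(i,j)$ with $0\le i,j<q^m$ contribute to $\D_{r,m}(T^{i+j})$ — specifically, recognizing that $i+j$ ranges up to $2q^m-2$ and therefore splits cleanly into the ``low'' range $i+j<q^m$ (yielding $1$) and the ``high'' range $q^m\le i+j\le 2q^m-2$ (yielding $T$). No convergence issues arise because all manipulations take place inside the formal power series ring $\Fq[[T]]$.
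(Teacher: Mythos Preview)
Your proposal is correct and follows essentially the same approach as the paper: both start from the decomposition of $x$ and $y$ given by Lemma~\ref{basic}(1), multiply out, and then read off $\D_{r,m}(xy)$. The paper phrases the last step as invoking the \emph{uniqueness} of the representation $z=\sum_{r} T^r \D_{r,m}^{q^m}(z)$, whereas you apply $\D_{r,m}$ directly via Lemma~\ref{basic}(2) and (\ref{directdef}); these are equivalent bookkeeping devices, and your version has the merit of being fully spelled out.
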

\begin{proof}
This follows from the identity in Lemma \ref{basic} (1) which also provides the uniqueness of such a representation
of any element in $\Fq[[T]],$
in terms of $\D_{r,m}^{q^m}.$ This is left to the reader to verify.
\end{proof}

\begin{lemma}\label{continu}
For $0 \leq r <q^m,$ $\D_{r,m}$ is continuous on  $\Fq[[T]].$
\end{lemma}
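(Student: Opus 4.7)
The plan is to exploit the $\Fq$-linearity of $\D_{r,m}$ (already implicit from the definition and the fact that it acts coefficient-wise): since the map is additive, continuity on all of $\Fq[[T]]$ reduces to continuity at the origin with respect to the $T$-adic topology. So the task is to produce a quantitative modulus of continuity, i.e.\ a bound on $v_T(\D_{r,m}(x))$ in terms of $v_T(x)$.

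Concretely, I would write $x = \sum_{n\geq 0} x_n T^n$, assume $v_T(x) \geq N$ (so that $x_n = 0$ for $n < N$), and inspect the definition
$$ \D_{r,m}(x) = \sum_{n\geq 0} x_{nq^m + r}\, T^n. $$
The coefficient of $T^n$ on the right vanishes whenever $nq^m + r < N$, i.e.\ whenever $n < (N-r)/q^m$. For $N > r$ this gives the estimate
$$ v_T\bigl(\D_{r,m}(x)\bigr) \ \geq\ \left\lceil \frac{N-r}{q^m} \right\rceil, $$
whose right-hand side tends to infinity as $N \to \infty$. Equivalently, $|\D_{r,m}(x)|_T \leq q^{-\lceil (N-r)/q^m\rceil}$, so that any neighborhood of $0$ in the target can be hit by pulling back through a sufficiently deep neighborhood of $0$ in the source.

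There is no real obstacle in this argument; the only thing to be careful about is the index bookkeeping around the inequality $nq^m + r < N$, in particular that $r$ is a \emph{fixed} residue with $0 \leq r < q^m$ and so only shifts $N$ by a bounded amount. Once the valuation estimate is in hand, continuity at $0$ (hence everywhere, by $\Fq$-linearity) is immediate, and no further machinery beyond Lemma \ref{basic} is required.
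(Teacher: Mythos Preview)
Your argument is correct and follows essentially the same strategy as the paper: reduce to continuity at $0$ via $\Fq$-linearity, then establish a lower bound on $v_T(\D_{r,m}(x))$ in terms of $v_T(x)$. The only minor tactical difference is that the paper obtains its bound $v(\D_{r,m}(x)) \geq \lfloor v(x)/q^m \rfloor$ by factoring $x = T^{v(x)} y$ and invoking Lemma~\ref{basic}(2) to pull out $T^{\lfloor v(x)/q^m\rfloor}$, whereas you read the vanishing of low-order coefficients directly from the defining formula; your bound $\lceil (N-r)/q^m\rceil$ is in fact never worse and sometimes sharper by one.
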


\begin{proof}
Because $\D_{r,m}$ is linear, it suffices to show that it is continuous at $x=0,$ by checking
\begin{eqnarray}\label{valpsi}
v(\D_{r,m}(x)) \geq [\frac{v(x)}{q^m}],
\end{eqnarray}
where $[a]$ is the greatest integer number $\leq a.$
Setting $n=v(x)$, write $x=T^ny$ with $(T,y)=1$ and
$n=lq^m +s$ with $0\leq s <q^m$ and $l\geq 0.$
Lemma \ref{basic} (2)  gives
\begin{eqnarray*} \label{valdrm}
\D_{r,m}(x)=\D_{r,m}(T^ny) =T^{[n/q^m]}\D_{r,m}(T^{s}y).
\end{eqnarray*}
Because $v((T^{s}y))\geq 0$, the preceding equality yields the desired inequality in (\ref{valpsi}).
\end{proof}


It is now  of great interest to find explicit expansions of $\D_{r,m}$ and its $q$th powers, in terms of Hasse derivatives.

\begin{theorem}\label{power1}
For  $t, m$ nonnegative integers and  $0 \leq r <q^m,$
$$ \D_{r,m}^{q^t} = \sum_{n=0}^{\infty}C_{r,n}^{(t)}\sD_{n},$$
where $C_{r,n}^{(t)}=(-1)^{n-r}\binom{s}{r}T^{s-r}(T^{q^m}-T^{q^t})^l$
if $n=lq^m +s$  with  $0 \leq s <q^m $ and $ l\geq 0.$
\end{theorem}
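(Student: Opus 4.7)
Both sides define continuous $\Fq$-linear operators on $R=\Fq[[T]]$, so it is enough to test equality on the topological basis $\{T^N\}_{N\geq 0}$. For the right-hand side, since $\|\sD_n\|=1$ on $LC(R,K)$ and $v_T\bigl((T^{q^m}-T^{q^t})^l\bigr)\geq l\min(q^m,q^t)$ (with every $l\geq 1$ term vanishing outright when $t=m$), the coefficients $C_{r,n}^{(t)}$ tend to $0$, and the series converges in sup-norm. For the left-hand side, \eqref{directdef} together with $a^{q^t}=a$ for $a\in\Fq$ yields
$$\D_{r,m}^{q^t}(T^N)=T^{Lq^t}\text{ if }N=Lq^m+r,\text{ and }0\text{ otherwise.}$$

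I would then evaluate the right-hand side at $T^N$. Write $N=Mq^m+r'$ and each index $n=lq^m+s$ with $0\leq r',s<q^m$. Because $q^m=p^{em}$, Lucas' theorem in base $p$ gives $\binom{N}{n}\equiv\binom{M}{l}\binom{r'}{s}\pmod{p}$. Substituting and factoring out common $T$-powers, the double sum separates as
$$\sum_n C_{r,n}^{(t)}\binom{N}{n}T^{N-n}=T^{r'-r}\left(\sum_{s=0}^{q^m-1}(-1)^{s-r}\binom{s}{r}\binom{r'}{s}\right)\sum_l(-1)^{lq^m}\binom{M}{l}T^{(M-l)q^m}(T^{q^m}-T^{q^t})^l.$$
The absorption identity $\binom{s}{r}\binom{r'}{s}=\binom{r'}{r}\binom{r'-r}{s-r}$ collapses the $s$-sum to $\binom{r'}{r}(1-1)^{r'-r}=\delta_{r,r'}$. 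Hence the right-hand side vanishes whenever $r\neq r'$, matching the left-hand side in that case.

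For the diagonal case $r'=r$ (so $L=M$), absorbing $(-1)^{lq^m}$ into the base turns the $l$-sum into the binomial expansion of
$$\bigl(T^{q^m}+(-1)^{q^m}(T^{q^m}-T^{q^t})\bigr)^M.$$
The key observation is that $1+(-1)^{q^m}\equiv 0\pmod{p}$ in every characteristic---trivially when $p=2$, and because $q^m$ is odd when $p$ is odd---so the bracket collapses to $-(-1)^{q^m}T^{q^t}=T^{q^t}$, and the $l$-sum evaluates to $T^{Mq^t}=T^{Lq^t}$, completing the match. The only spot requiring care is this uniform sign check across characteristics; everything else reduces to Lucas' theorem and a standard binomial inversion.
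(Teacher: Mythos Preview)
Your argument is correct. It differs from the paper's proof in orientation rather than in substance: the paper invokes the coefficient formula from Theorem~\ref{Hre}(2), namely
\[
C_{r,n}^{(t)}=\sum_{i=0}^{n}(-1)^{n-i}\binom{n}{i}T^{n-i}\D_{r,m}^{q^t}(T^i),
\]
and then simplifies this sum (fixing $n$, summing over $i$) via Lucas and the binomial theorem to reach the closed form. You instead take the closed form as given and verify the operator identity by evaluating both sides at $T^N$ (fixing $N$, summing over $n$), using the very same two tools. The two computations are essentially dual to each other.

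What your route buys is self-containment: you do not need to know in advance that $\{\cD_n\}$ is an orthonormal basis for $LC(R,K)$ with the Carlitz-difference coefficient formula; you only need continuity, $\Fq$-linearity, and the convergence check you supplied. The paper's route, by contrast, \emph{derives} the coefficients rather than merely checking them, so it would still produce the answer even if the closed form were not already on the table. Both are valid and of comparable length.
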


\begin{proof}
By the formula for coefficients in Theorem \ref{Hre}, we have
$$ C_{r,n}^{(t)} =\sum_{i=0}^{n}(-1)^{n-i}\binom{n}{i}T^{n-i}\D_{r,m}^{q^t}(T^i).$$
Writing $n=lq^m +s,$ $i=l'q^m +s'$ with  $0 \leq s,s' <q^m $ and $l, l' \geq 0,$ we obtain
\begin{eqnarray*}
 C_{r,n}^{(t)} &=& \sum_{l'=0}^{l}\sum_{s'=0}^{s}(-1)^{n-l'q^m -s'}T^{s-s'}T^{(l-l')q^m}\binom{lq^m +s}{l'q^m +s'}\D_{r,m}^{q^t}(T^{l'q^m +s'}) \\
 &=& \sum_{l'=0}^{l}(-1)^{n-l'q^m -r}T^{s-r}T^{(l-l')q^m}\binom{lq^m +s}{l'q^m +r}T^{q^t l'}.
\end{eqnarray*}
Because $\binom{lq^m +s}{l'q^m +r} =\binom{l}{l'}\binom{s}{r}$ in $\Fq$ from the Lucas congruence \cite{Lu}, we have
\begin{eqnarray*}
C_{r,n}^{(t)} &=& (-1)^{n-r}\binom{s}{r} T^{s-r}T^{lq^m}  \sum_{l'=0}^{l}(-1)^{l'}\binom{l}{l'}(T^{q^t-q^m})^{l'} \\
&=& (-1)^{n-r}\binom{s}{r} T^{s-r}(T^{q^m})^l (1-T^{q^t -q^m})^{l} \\
& =& (-1)^{n-r} \binom{s}{r}T^{s-r}(T^{q^m}-T^{q^t})^{l}.
\end{eqnarray*}
The proof is complete.
\end{proof}

From Theorem \ref{power1}, we select the most important case for which $t=m,$ which gives that
the right $q$th powers of $\D_{r,m}$ have a finite expansion in terms of the Hasse derivatives.

\begin{theorem}\label{singleout}
For $0 \leq r <q^m,$

$$ \D_{r,m}^{q^m} = \sum_{n=r}^{q^m -1}\binom{n}{r}(-T)^{n-r}\sD_{n}.$$
In particular,
$$ \D_{q^m -1,m}^{q^m} = \sD_{q^m-1}.$$

\end{theorem}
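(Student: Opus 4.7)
The plan is to derive Theorem \ref{singleout} as a direct specialization of Theorem \ref{power1} at $t=m$, so essentially no new computation is needed. I would proceed as follows.

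First, I would set $t=m$ in the general formula
$$\D_{r,m}^{q^t} = \sum_{n=0}^{\infty}C_{r,n}^{(t)}\sD_{n}, \qquad C_{r,n}^{(t)}=(-1)^{n-r}\binom{s}{r}T^{s-r}(T^{q^m}-T^{q^t})^l,$$
where $n=lq^m+s$ with $0\le s<q^m$. The point is that the factor $(T^{q^m}-T^{q^t})^{l}$ becomes $0^{l}$, which vanishes for every $l\ge1$ (using the standard convention $0^0=1$). Therefore only the indices $n$ with $l=0$, i.e.\ $0\le n=s<q^m$, contribute to the sum, and the infinite expansion collapses to a finite one supported on $\{0,1,\ldots,q^m-1\}$.

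Next, for such $n$ the coefficient simplifies to $C_{r,n}^{(m)}=(-1)^{n-r}\binom{n}{r}T^{n-r}$, and since $\binom{n}{r}=0$ whenever $n<r$, the surviving range is $r\le n\le q^m-1$. This yields
$$\D_{r,m}^{q^m}=\sum_{n=r}^{q^m-1}\binom{n}{r}(-T)^{n-r}\sD_{n},$$
which is the first claim. For the particular case $r=q^m-1$, only the single index $n=q^m-1$ remains, with $\binom{q^m-1}{q^m-1}=1$ and $(-T)^{0}=1$, giving $\D_{q^m-1,m}^{q^m}=\sD_{q^m-1}$ at once.

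There is no real obstacle here; the only thing worth flagging is the convention $0^{0}=1$ needed to retain the $l=0$ contribution after substituting $t=m$. If one prefers to avoid that convention, an alternative is to re-run the calculation in the proof of Theorem \ref{power1} with $t=m$ ab initio: the inner sum $\sum_{l'=0}^{l}(-1)^{l'}\binom{l}{l'}(T^{q^t-q^m})^{l'}$ becomes $\sum_{l'=0}^{l}(-1)^{l'}\binom{l}{l'}$, which is $0$ for $l\ge1$ and $1$ for $l=0$, leading to the same conclusion without any boundary convention.
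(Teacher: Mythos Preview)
Your proposal is correct and follows exactly the same route as the paper: specialize Theorem \ref{power1} at $t=m$, observe that the factor $(T^{q^m}-T^{q^t})^l$ kills all terms with $l\ge 1$, and read off the finite sum, with the second assertion immediate from the first. Your discussion of the $0^0$ convention (and the alternative of re-running the inner sum at $t=m$) is a welcome clarification, but otherwise nothing differs from the paper's argument.
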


\begin{proof}
From Theorem \ref{power1},  $C_{r,n}^{(m)}$ vanishes only if $l \not =0$ in the expression of $n=lq^m +s.$ Hence,
$C_{r,n}^{(m)} =(-1)^{n-r}\binom{n}{r}T^{n-r}$ for $r \leq n< q^m.$
The  second assertion follows immediately from the first assertion.
\end{proof}

It is also of interest to find the inversion formula to the identity in Theorem \ref{singleout}.
This indicates that the resulting formula provides an alternative way of calculating ${\sD}_{n}(a)$ in terms of Cartier operators.

\begin{theorem}\label{inv}
For $0 \leq n <q^m,$
$$ {\sD}_{n}  = \sum_{r=n}^{q^m -1}\binom{r}{n}T^{r-n}\D_{r,m}^{q^m}.$$

\end{theorem}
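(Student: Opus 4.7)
The plan is to prove the identity directly by evaluating both sides on an arbitrary monomial $T^N$ and then extending to all of $\Fq[[T]]$ by $\Fq$-linearity and continuity (continuity of each $\D_{r,m}$, and therefore of the finite sum on the right, is Lemma \ref{continu}, while continuity of $\sD_n$ is standard; see \cite{V}). This is attractive because both operators $\sD_n$ and $\D_{r,m}^{q^m}$ have very clean actions on monomials, which makes the verification essentially bookkeeping with the Lucas congruence.

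The concrete steps are as follows. Fix $N\geq 0$ and write $N=Lq^m+s$ with $0\leq s<q^m$ and $L\geq 0$. On the left, $\sD_n(T^N)=\binom{N}{n}T^{N-n}$; since $0\leq n<q^m$, Lucas' congruence collapses $\binom{Lq^m+s}{n}$ to $\binom{s}{n}$ in $\Fq$, so $\sD_n(T^N)=\binom{s}{n}T^{Lq^m+s-n}$. On the right, the defining formula (\ref{directdef}) gives $\D_{r,m}(T^N)=T^L$ when $r=s$ and $0$ otherwise, so $\D_{r,m}^{q^m}(T^N)=T^{Lq^m}\cdot \mathbf{1}_{r=s}$, and the whole sum collapses to a single term, yielding $\binom{s}{n}T^{s-n}\cdot T^{Lq^m}$ (this being $0$ when $s<n$, which matches $\binom{s}{n}=0$). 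The two expressions coincide, so the identity holds on monomials and therefore everywhere.

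An equivalent approach, which I would run as a sanity check, is formal binomial inversion of Theorem \ref{singleout}. Plugging the expansion $\D_{r,m}^{q^m}=\sum_{k=r}^{q^m-1}\binom{k}{r}(-T)^{k-r}\sD_k$ into the proposed right-hand side and swapping the two finite sums leaves one to verify the scalar identity
\[
\sum_{r=n}^{k}\binom{r}{n}\binom{k}{r}(-1)^{k-r}=\delta_{n,k}\qquad(n\leq k<q^m),
\]
which follows from the rearrangement $\binom{r}{n}\binom{k}{r}=\binom{k}{n}\binom{k-n}{r-n}$ together with the standard alternating-sum relation $\sum_{j=0}^{N}\binom{N}{j}(-1)^{N-j}=\delta_{N,0}$.

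I do not expect a real obstacle: the only substantive input is Lucas' congruence to strip the $Lq^m$ piece from $\binom{N}{n}$, which is exactly what makes the truncation at $r=q^m-1$ in Theorem \ref{singleout} (and hence here) work. The mildest care is needed at the boundary cases $s<n$ or $L=0$, where one must observe that the vanishing of $\binom{s}{n}$ on both sides keeps the formal equality intact.
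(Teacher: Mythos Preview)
Your proposal is correct. Your sanity-check approach via binomial inversion is precisely the paper's own argument: the paper phrases it as verifying that the upper-triangular transition matrix $C(-T)$ coming from Theorem~\ref{singleout} has inverse $C(T)$, and the $(n,r)$-entry computation of $C(T)C(-T)$ reduces to exactly the alternating-sum identity $\sum_{r=n}^{k}\binom{r}{n}\binom{k}{r}(-1)^{k-r}=\delta_{n,k}$ that you wrote down. Your primary approach, on the other hand, is a genuinely different route: by evaluating both sides on $T^N=T^{Lq^m+s}$ and using Lucas to collapse $\binom{N}{n}$ to $\binom{s}{n}$, you establish the identity directly without invoking Theorem~\ref{singleout} at all. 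This makes the result self-contained rather than a formal inverse of the preceding theorem, and it exposes concretely why the sum truncates at $q^m-1$ (only the single term $r=s$ survives). The trade-off is minor: the paper's matrix argument packages the two theorems as a single inversion pair, while your direct check costs one extra appeal to Lucas but avoids any dependency on Theorem~\ref{singleout}.
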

\begin{proof}
Let $C(-T)$ be the $q^m$ by $q^m$ transition matrix from $ \{ {\sD}_{n} \}_{ 0\leq  n \leq q^m -1}$
into $ \{ \D_{r,m}^{q^m} \}_{ 0\leq  r \leq q^m -1}.$
From Theorem \ref{singleout}, $C(-T)$ is a matrix  whose $(n,r)$ entry is $C_{n,r}^{(m)}=\binom{n}{r}(-T)^{n-r}.$ In addition, it is an upper triangular matrix whose diagonal entries are all 1; thus, it is invertible.
We claim that the inverse of $C(-T)$ is $C(T)$ whose $(n,r)$ entry is $B_{n,r}^{(m)}=\binom{n}{r}T^{n-r},$ equivalently $C(T)C(-T)$ is the identity  matrix.
For this, we compute the $(n,r)$ entry, denoted $M_{n,r},$ of  $C(T)C(-T)$ as follows:
\begin{eqnarray*}
M_{n,r} &=& \sum_{k=0}^{q^m -1}B_{n,k}^{(m)}C_{k,r}^{(m)} \\
&=& T^{n-r}\sum_{k=0}^{q^m -1}\binom{n}{k}\binom{k}{r}(-1)^{n-k} \\
& =& \binom{n}{r}(-T)^{n-r}\sum_{k=r}^{n}(-1)^{k-r}\binom{n-r}{k-r}\\
& =& \binom{n}{r}(-T)^{n-r}(1-1)^{n-r} =\delta_{n,r},
\end{eqnarray*}
where $\delta_{n,r}$ is the Kronecker delta symbol, which is used extensively
from this point onwards.
Then the result follows.
\end{proof}

In Theorem \ref{gmain} we show that Theorems \ref{singleout} and \ref{inv} hold
over a field of Laurent series with coefficients in a more general field of characteristic $p>0.$
We are now ready to introduce two sets  of Cartier operators on $R=\Fq[[T]].$

\begin{definition}\label{ConR}
Let $n$ be an integer with $q^{k-1} \leq n <q^k,$ or $(n,k)=(0,0).$

(1) We define  a sequence of Cartier operators $\{\phi_{n}(x)\}_{n\geq 0}$ on $R$ given by
\begin{eqnarray*}\label{phin}
\phi_n(x)=\D_{n,k}^{q^k}(x)=\sum_{i\geq0}x_{iq^k +n}T^{iq^k}.
\end{eqnarray*}

(2) We define  a sequence of Cartier operators $\{\psi_{n}(x)\}_{n\geq 0}$ on $R$ given by
\begin{eqnarray*}\label{psin}
\psi_n(x)=\D_{n,k}(x)=\sum_{i\geq0}x_{iq^k +n}T^{i}.
\end{eqnarray*}
\end{definition}
From the definitions of both $\phi_n$ and $\psi_n~(n\geq1)$, it follows that they implicitly involve the positive integer $k,$ which is uniquely determined by $k = [\log_{q}n] +1,$; that is, the number of $q$-adic digits of $n.$
In what follows, the letter $k$ is not appended to the notation of the two Cartier operators.
Later on, it is shown that it is even easier to use $\phi_n$ than  $\psi_n.$ However,
the latter is dealt with by way of the former and the two sets of Cartier operators  $\phi_n$  and  $\psi_n$
play the same role in compositions of their $q$-adic extensions.
It is now worth noting that for $q^{k-1} \leq n < q^k$ and $x,y \in R,$
\begin{eqnarray}
\phi_n &=& \psi_n^{q^k}, \nonumber \\
\phi_{n}(x^{q^k}y) &=& x^{q^k}\phi_{n}(y), \\ \label{id1R}
\phi_{q^k-1}(x) &=& \cD_{q^k -1}(x), \\ \label{id2R}
\phi_{n}(x) &=& \phi_{q^k-1}(T^{q^k-1-n}x), \\ \label{id3R}
\psi_n(x^{q^k}y)&=& x\psi_n(y).\label{id4R}
\end{eqnarray}

For $m =lq^k +m_0$ with $0 \leq m_0 <q^k$ and $l\geq 0,$ we have
\begin{equation}\label{pntm2}
 \phi_n(T^m)=T^{m-m_0}\delta_{n,m_0}~~ {\rm and}~~ \psi_n(T^m)=T^{[m/q^k]}\delta_{n,m_0}.
\end{equation}

For $n$, a positive integer written  in $q$-adic form as
\begin{eqnarray*}\label{nqadic}
n =\sum_{i=0}^{k-1}n_iq^i ~(0 \leq n_i<q ~{\rm and} ~n_{k-1} \not =0 ),
\end{eqnarray*}
we define $q(n)$ and $n_- $, respectively, as
\begin{equation}\label{q(n)}
q(n) =n_{k-1}q^{k-1}  ~{\rm and} ~ n_- = n-q(n).
\end{equation}

As one of the main results, we state the following theorem.
\begin{theorem}\label{pnrep}
Let $f: R \rightarrow K$ be an $\Fq$-linear continuous function.
Set
\begin{eqnarray}\label{cn}
c_0 &= & f(1),\nonumber \\
c_n&= & f(T^n) -T^{q(n)}f(T^{n_-})~(n\geq1),
\end{eqnarray}
where $q(n)$ and $n_-$ are defined in (\ref{q(n)}).
Then, $\{ \ph_n \}_ {n\geq 0}$
is an orthonormal basis for  $LC(R,K).$
That is, $\sum_{n=0}^{\infty}c_n \ph_n(x)$ converges uniformly to $f(x).$
\end{theorem}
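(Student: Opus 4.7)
The plan is to establish the orthonormal basis property and the coefficient formula as two largely independent steps.

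For the orthonormal basis claim, I would invoke Serre's criterion (Lemma \ref{ba}) using the Hasse derivatives $\{\cD_n\}$ from Theorem \ref{Hre} as the benchmark basis. By Theorem \ref{singleout}, for $n$ with $q^{k-1}\leq n<q^k$ one has
$$\phi_n - \cD_n \;=\; \sum_{l=n+1}^{q^k-1}\binom{l}{n}(-T)^{l-n}\cD_l.$$
Since each binomial coefficient $\binom{l}{n}$ lies in $\Fq$ and $|T|^{l-n}\leq q^{-1}$ for $l>n$, orthonormality of $\{\cD_l\}$ yields $\|\phi_n-\cD_n\|\leq q^{-1}<1$ uniformly in $n$. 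Lemma \ref{ba} then gives that $\{\phi_n\}_{n\geq 0}$ is an orthonormal basis of $LC(R,K)$; in particular every $f\in LC(R,K)$ admits a unique expansion $f=\sum_n c_n\phi_n$ with $c_n\to 0$.

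The remaining task is to verify that the explicit $c_n$ defined in (\ref{cn}) produce this expansion. I would do this by evaluating both sides at each monomial $T^m$. By (\ref{pntm2}), $\phi_n(T^m)$ vanishes except when $n=0$ or $n=N_j:=\sum_{i=0}^{i_j}m_iq^i$, where $i_1<\cdots<i_r$ are the positions of the nonzero digits in the $q$-adic expansion of $m$; in the latter case $\phi_{N_j}(T^m)=T^{m-N_j}$. The critical observation is that $q(N_j)=m_{i_j}q^{i_j}$ and $(N_j)_-=N_{j-1}$, so that
$$c_{N_j}\,\phi_{N_j}(T^m)\;=\;f(T^{N_j})\,T^{m-N_j}\;-\;f(T^{N_{j-1}})\,T^{m-N_{j-1}}.$$
The sum over $j=1,\ldots,r$ telescopes to $f(T^m)-f(1)T^m$, and adding the $n=0$ term $c_0\phi_0(T^m)=f(1)T^m$ recovers $f(T^m)$ exactly. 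By $\Fq$-linearity and continuity, the identity $\sum c_n\phi_n(x)=f(x)$ extends from the dense $\Fq$-span of $\{T^m:m\geq 0\}$ to all of $R$. Convergence of the series is ensured by the estimate $|c_n|\leq\max(|f(T^n)|,\,|T|^{q(n)}\|f\|)\to 0$, where the first term vanishes by continuity of $f$ at $0$ and the second by $q(n)\geq q^{k-1}\to\infty$.

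The main technical hurdle is the telescoping bookkeeping: one must correctly match the ``drop the leading $q$-adic digit'' operation $n\mapsto n_-$ applied at $n=N_j$ with the preceding partial-digit sum $N_{j-1}$ of $m$, and simultaneously recognize that the only $n$'s contributing to $\phi_n(T^m)$ are those whose digits form an initial segment of $m$'s digits. Once this indexing is in place, the collapse is automatic and the closed form for $c_n$ falls out.
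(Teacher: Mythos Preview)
Your argument is correct, and the coefficient-verification half (the telescoping on $T^m$) is essentially identical to the paper's first proof. The difference lies in how you establish orthonormality of $\{\phi_n\}$.

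The paper gives two arguments for that part. Its first proof does not separate the two tasks: it shows directly that $\sum c_n\phi_n$ converges to $f$ (via the same telescoping you use), and then extracts the norm identity $\|f\|=\max|c_n|$ from the formula for $c_n$. Its second proof applies Serre's Lemme~I in the raw form: it checks that $\phi_n(R)\subset R$ and that the reductions $\overline{\phi_n}$ are the dual basis to $1,T,\dots,T^{n-1}$ in $(\F_q[T]/T^n)^*$. Neither proof passes through a comparison with $\{\cD_n\}$.

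Your route---invoking Theorem~\ref{singleout} to bound $\|\phi_n-\cD_n\|\le q^{-1}$ and then applying Lemma~\ref{ba}---is a third option. It is efficient precisely because Theorem~\ref{singleout} has already done the work, and it mirrors exactly the strategy the paper itself adopts later for $\{\psi_n\}$ in Theorem~\ref{car2}. The trade-off is that your proof of orthonormality depends on Theorem~\ref{Hre} (that $\{\cD_n\}$ is already known to be an orthonormal basis), whereas the paper's dual-basis argument for $\{\phi_n\}$ is self-contained and could in principle serve as an independent starting point. Both are valid; yours is shorter given what is already on the table.
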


\begin{proof}
We provide two proofs.
The first proof is based on standard arguments in non-Archimedean analysis, for example, \cite[Theorem, p. 183]{Ro}.

Because $f(x)$ is continuous at $x=0,$ it is obvious from (\ref{cn}) that
$c_n$ is a null sequence in $K$ because, as $n \rightarrow \infty,$ $q(n)\rightarrow \infty.$
As $R$ is compact, the series $\sum_{n=0}^{\infty}c_n \ph_n(x)$ converges uniformly to
the continuous function $f(x).$ Here we need to show that the two continuous functions are equal on $R.$
By $\Fq$-linearity and
continuity, it now suffices to check that they agree on all monomials $T^m(m\geq0).$
For this, write a positive integer $m=\sum_{j=0}^{k}m_{i_j}q^{i_j}$ in $q$-adic form such that
$m_{i_j} \not =0$ for all $0 \leq j \leq k$ and $ 0 \leq i_{0} < i_{1}< \cdots < i_{k}.$

Using (\ref{pntm2}) we calculate
\begin{eqnarray*}
 \sum_{n\geq0}c_n \ph_n(T^m) &=&  \sum_{n=0}^{m}c_n \phi_n(T^m)\\
&=& f(1)T^m +c_{m_{i_0}q^{i_0}}T^{m-m_{i_0}q^{i_0}} +c_{m_{i_0}q^{i_0} +m_{i_1}q^{i_1}}T^{m-m_{i_0}q^{i_0}-m_{i_1}q^{i_1}}
 + \cdots  \\
& &
  \cdots + c_{ m_{i_0}q^{i_0} + m_{i_1}q^{i_1}+\cdot+ m_{i_{k-1}}q^{i_{k-1}}}T^{m_{i_k}q^{i_k}}  +c_m.
\end{eqnarray*}
 By the formula in (\ref{cn}), the right hand side of the preceding equality
 equals
 $$ f(1)T^m +(f(T^{m_{i_0}q^{i_0}})-T^{m_{i_0}q^{i_0}}f(1))T^{m-m_{i_0}q^{i_0}}
 $$
 $$+(f(T^{m_{i_0q^{i_0} +m_{i_1}q^{i_1}}})-T^{q(m_{i_0}q^{i_0}+{m_{i_1}q^{i_1}})}f(T^{m_{i_0}q^{i_0}}))T^{m-m_{i_0}q^{i_0}-m_{i_1}q^{i_1}} + \cdots $$
 $$
 + \cdots + (f(T^{\sum_{j=0}^{k-1}m_{i_j}q^{i_j}})-T^{m_{i_{k-1}}q^{i_{k-1}}}f(T^{{\sum_{i=0}^{k-2}m_{i_j}q^{i_j}}})) T^{m_{i_k}q^{i_k}}  + f(T^m) -T^{q(m)}f(T^{m_-}). $$
It follows that the sum above becomes $f(T^m)$, because it is a telescoping sum, by (\ref{q(n)}).

In the usual way, from  (\ref{cn}), we deduce that $\|f\|=\mbox{max}\{|c_{n}|\}.$
For an alternative proof, we use \cite[Lemme I]{Se}, as was done with the Hasse derivatives in \cite{Co2} and \cite{J1,J6}.
In this lemma, a necessary and sufficient
condition for $\{ \ph_n \}_ {n\geq 0}$ to be an orthonormal basis for $LC(R,K)$
is that
\par
(1) $\ph_n $ maps $R$ into itself;
\par
(2) the reduced functions modulo $T$, denoted
$\overline{\ph_n },$ form  a basis for $LC(R,{\F}_q)$ as an ${\F}_q$-vector space.

Since Part (1) is trivial, we only check Part (2) by showing that for any integer $n>0$,
the reduced functions $\overline{\ph_0 }, \overline{\ph_1 } \cdots, \overline{\ph_{n-1} } $
are linearly independent in the $\Fq$-dual space $({\F}_{q}[T]/T^{n})^*.$ In fact, using (\ref{pntm2}), these functions  are
the dual basis to $1, T, \cdots, T^{n-1}.$

\end{proof}

The $q$th power maps have an explicit expansion in terms of the Cartier operators $\phi_n.$

\begin{corollary}\label{qmap}
For any integer $m\geq 0,$
$$x^{q^m} = x+ \sum_{n\geq1} (T^{nq^m}-T^{q( n)+q^mn_-})\phi_{n}(x).$$
\end{corollary}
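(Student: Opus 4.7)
The plan is to recognize the corollary as a direct application of Theorem \ref{pnrep} to the map $f(x) = x^{q^m}$, then simply compute the resulting expansion coefficients in closed form.

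First I would verify that $f(x) = x^{q^m}$ belongs to $LC(R, K)$. Continuity is immediate (the $q^m$-power map is polynomial on each truncation), and $\Fq$-linearity follows because for $a \in \Fq$ we have $a^{q^m} = a$ by Frobenius, so $(ax+by)^{q^m} = a x^{q^m} + b y^{q^m}$. Thus $f$ admits an expansion $f = \sum_{n \geq 0} c_n \phi_n$ in the orthonormal basis of Theorem \ref{pnrep}.

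Next I would compute the coefficients using the formula \eqref{cn}. For $n = 0$, $c_0 = f(1) = 1$. For $n \geq 1$,
\[
c_n = f(T^n) - T^{q(n)} f(T^{n_-}) = T^{n q^m} - T^{q(n) + q^m n_-}.
\]
This already matches the coefficient pattern claimed in the corollary.

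The final step is to identify $\phi_0(x)$ with $x$, so that the $n=0$ term in the sum collapses to $x$. This is a direct check from Definition \ref{ConR}: the index pair $(n, k) = (0,0)$ gives $\phi_0 = \D_{0,0}^{q^0} = \D_{0,0}$, and the defining formula $\D_{0,0}(\sum_{i} x_i T^i) = \sum_{i} x_i T^i$ shows $\phi_0 = \mathrm{id}$. Substituting everything into the expansion yields the stated identity.

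There is no real obstacle here — essentially all the work was done in Theorem \ref{pnrep}, and what remains is the bookkeeping of specialising $f$ to the Frobenius power and recognising $\phi_0$ as the identity. The only thing to be slightly careful about is that convergence of the series is automatic from Theorem \ref{pnrep}, since $T^{q(n)} \to 0$ forces $|c_n|_\pi = |T^{n q^m} - T^{q(n) + q^m n_-}|_\pi \to 0$ as $n \to \infty$.
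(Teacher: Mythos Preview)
Your proposal is correct and follows exactly the paper's approach: the paper's proof consists of the single sentence ``The proof is immediate from Theorem \ref{pnrep},'' and you have simply written out the coefficient computation that this entails. Your verification that $\phi_0$ is the identity and that $f(x)=x^{q^m}$ lies in $LC(R,K)$ makes the argument fully explicit.
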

\begin{proof}
The proof is immediate from Theorem \ref{pnrep}.
\end{proof}
The expansions in Corollary \ref{qmap} can be compared with Voloch's expansions of the $q$th power maps in terms of the Hasse derivatives $\cD_n$ (see \cite{V}):
$$x^{q^m} = \sum_{n\geq0}(T^{q^m} -T)^n\cD_{n}(x).$$

\begin{corollary}\label{qmdelta}
For $q^{k-1} \leq r <q^k \leq q^m$ or $(r,k)=(0,0)$,
$$\D_{r,m}^{q^m}(x)= \phi_r(x)-\sum_{\substack{k \leq i \leq m-1 \\ 1 \leq j \leq q-1 }}T^{jq^i}\phi_{jq^i +r}(x).$$
\end{corollary}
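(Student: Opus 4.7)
The plan is to apply Theorem~\ref{pnrep} to the function $f(x) := \D_{r,m}^{q^m}(x)$, which is an $\Fq$-linear continuous map from $R$ into $R \subset K$ (linearity is immediate, continuity follows from Lemma~\ref{continu} together with the continuity of the $q^m$-power map on $R$). Once we know $f \in LC(R,K)$, Theorem~\ref{pnrep} gives the expansion $f = \sum_{n\geq 0} c_n \phi_n$ with coefficients determined by (\ref{cn}), and the content of the corollary is exactly the computation of those $c_n$.

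First I would record the key evaluation on monomials: for $n \geq 0$,
\[
f(T^n) \;=\; \bigl(\D_{r,m}(T^n)\bigr)^{q^m} \;=\;
\begin{cases} T^{\ell q^m} & \text{if } n = \ell q^m + r \text{ with } \ell \geq 0,\\ 0 & \text{otherwise.}\end{cases}
\]
This immediately yields $c_0 = f(1) = \delta_{r,0}$, which matches the $\phi_r$ term when $(r,k)=(0,0)$.

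Next I would determine $c_n$ for $n \geq 1$ by a case analysis on the $q$-adic structure of $n$ relative to the form $\ell q^m + r$. The three cases to organize are: (i) $n = r$, giving $c_r = 1 - T^{q(r)}\cdot 0 = 1$, which produces the single term $\phi_r(x)$; (ii) $n = \ell q^m + r$ with $\ell \geq 1$, where writing $\ell$ in $q$-adic form shows that $n_-$ again has the form $\ell' q^m + r$ with $\ell' \geq 0$, and a short calculation using $q(n) + \ell' q^m = \ell q^m$ forces $c_n = T^{\ell q^m} - T^{\ell q^m} = 0$; and (iii) $n$ not of the form $\ell q^m + r$, where $f(T^n)=0$ and one checks that $n_-$ is of the form $\ell q^m + r$ only if $\ell = 0$, i.e., only if $n_- = r$. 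The latter means exactly that $n = cq^j + r$ with $1 \leq c \leq q-1$ and $j \geq k$, while $n \notin \{\ell q^m + r\}$ forces $j \leq m-1$. In this sub-case $c_n = -T^{q(n)} f(T^r) = -T^{cq^j}$.

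The bookkeeping in case (iii) --- showing that once $n_- = \ell q^m + r$ with $\ell \geq 1$, the top digit of $n$ automatically sits at position $\geq m$, so that $n$ itself lies in the form $\ell' q^m + r$ and hence falls into case (ii) rather than contributing --- is the one step that requires real care, since it is what prevents extraneous terms from appearing in the sum. Once this cancellation is verified, assembling the nonzero coefficients from (i) and (iii) yields exactly
\[
\D_{r,m}^{q^m}(x) \;=\; \phi_r(x) \;-\; \sum_{\substack{k \leq j \leq m-1 \\ 1 \leq c \leq q-1}} T^{cq^j}\phi_{cq^j + r}(x),
\]
as claimed. The $(r,k)=(0,0)$ case runs identically with $\phi_0(x) = x$ and the sum beginning at $j=0$.
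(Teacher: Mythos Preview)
Your proof is correct and follows essentially the same approach as the paper: apply Theorem~\ref{pnrep} to $f=\D_{r,m}^{q^m}$ and compute the coefficients $c_n$ from formula~(\ref{cn}) via a case analysis on the $q$-adic shape of $n$. The only cosmetic difference is that the paper organizes the cases by whether $n\geq q^m$ or $n<q^m$, whereas you organize them by whether $n\equiv r\pmod{q^m}$; the resulting computations and the identification of the nonvanishing coefficients as $c_r=1$ and $c_{cq^j+r}=-T^{cq^j}$ for $k\le j\le m-1$, $1\le c\le q-1$, are identical.
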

\begin{proof}
By Theorem \ref{pnrep}, writing $\D_{r,m}^{q^m}(x)=\sum_{n\geq0}  B_{r,n}^{(m)}\phi_{n}(x),$  we have $B_{r,0}^{(m)}=0$
and  for $n\geq1,$
\begin{eqnarray}\label{Bmrn}
B_{r, n}^{(m)} =\D_{r,m}^{q^m}(T^n)-T^{q(n)}\D_{r,m}^{q^m}(T^{n_-}).
\end{eqnarray}
If $n\geq q^m,$ then
writing $n=lq^m +s$ with $l\geq 1$ and $0 \leq s <q^m$ we have
$$ B_{r, n}^{(m)} =T^{n-s}\d_{r,s}-T^{q(n)+n_- -s}\d_{r,s}=0.$$
For $r > n,$ $ B_{r, n}^{(m)}$ also vanishes; thus, we may assume that $ r \leq n <q^m.$
For this case, we have
$$ B_{r, n}^{(m)} =\d_{r,n}-T^{q(n)}\d_{r,n_-}.$$
From this relation we deduce that
$B_{r, n}^{(m)}=1$ if $n=r$,  $-T^{q(n)}$ if $n>r$, and  $n_- =  r,$ 0 if $ n>r$ and $n_- = r.$
This case shows that $n$ is of the  form $n =jq^{i}+ r$ with $k \leq i <m$ and $1\leq j \leq q-1.$
The proof of the case where $r=0=k$ follows in the similar way.
\end{proof}

The following corollary can be deduced in a similar fashion as Corollary \ref{qmdelta}.

\begin{corollary}\label{qmdelta2}
For $q^{k-1} \leq r <q^k \leq q^m$ or $(r,k)=(0,0)$,
$$\D_{r,m}(x)= \phi_r(x)+\sum_{j>0}(T^j -T^{jq^m +j_-})\phi_{jq^m+r}(x) -\sum_{\substack{k \leq i \leq m-1 \\ 1 \leq j \leq q-1 }}T^{jq^i}\phi_{jq^i +r }(x).$$
\end{corollary}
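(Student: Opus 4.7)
The plan is to apply Theorem \ref{pnrep} directly to the operator $\D_{r,m}$, mirroring the proof of Corollary \ref{qmdelta}. The $\Fq$-linearity is built into the definition of $\D_{r,m}$, and Lemma \ref{continu} supplies continuity, so Theorem \ref{pnrep} produces an expansion $\D_{r,m}(x) = \sum_{n \ge 0} B_{r,n}\phi_n(x)$ with coefficients
\[
B_{r,0} = \D_{r,m}(1), \qquad B_{r,n} = \D_{r,m}(T^n) - T^{q(n)}\D_{r,m}(T^{n_-}) \quad (n \ge 1).
\]
Formula (\ref{pntm2}) then collapses each $\D_{r,m}(T^a)$ to a single monomial (or zero), and the whole proof reduces to evaluating these $B_{r,n}$ and collecting the nonzero ones.

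Step one handles the range $n < q^m$. Here both $\D_{r,m}(T^n) = \delta_{r,n}$ and $\D_{r,m}(T^{n_-}) = \delta_{r,n_-}$, so $B_{r,n} = \delta_{r,n} - T^{q(n)}\delta_{r,n_-}$. The term $\delta_{r,n}$ equals $1$ precisely at $n = r$, producing the leading $\phi_r(x)$; the term $-T^{q(n)}\delta_{r,n_-}$ is nonzero precisely when $n_- = r$ and $n \ne r$. Reading off $q$-adic expansions, this forces $n = jq^i + r$ with $1 \le j \le q-1$ and $k \le i \le m-1$, and then $q(n) = jq^i$. These contributions are exactly the second, finite sum in the statement. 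The boundary coefficient $B_{r,0} = \delta_{r,0}$ is absorbed into $\phi_r(x)$ in the exceptional case $(r,k) = (0,0)$, where $\phi_0$ is the identity on $R$.

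Step two handles the range $n \ge q^m$. Writing $n = jq^m + s$ with $j \ge 1$ and $0 \le s < q^m$, the hypothesis $r < q^m$ ensures that the digits of $s$ and the digits of $j$ (shifted up by $m$) occupy disjoint positions in the $q$-adic expansion of $n$. Consequently the leading digit of $n$ is carried by $j$, the quantities $q(n)$ and $n_-$ are determined by the leading digit of $j$ alone, and $B_{r,n}$ vanishes whenever $s \ne r$. The surviving values occur exactly at $n = jq^m + r$ for $j \ge 1$ and produce the summands of the first series on the right-hand side of the statement.

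The main bookkeeping obstacle is tracking how the $q$-adic data $(q(n), n_-)$ of $n = jq^m + r$ relate to the analogous data of $j$; this identification is what allows the second term in $B_{r,n}$ to be written in the compact monomial form claimed. Once that relation is in hand, the three pieces of the identity --- the $\phi_r(x)$ term, the series over $n = jq^m + r$, and the finite sum over $n = jq^i + r$ --- match the three families of non-vanishing coefficients $B_{r,n}$, and sup-norm convergence is automatic from Theorem \ref{pnrep}.
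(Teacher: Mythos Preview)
Your proposal is correct and follows exactly the route the paper indicates: the paper's entire proof is the remark that the corollary ``can be deduced in a similar fashion as Corollary~\ref{qmdelta},'' and that is precisely what you do---apply Theorem~\ref{pnrep} to the $\Fq$-linear continuous operator $\D_{r,m}$, compute $B_{r,n}=\D_{r,m}(T^n)-T^{q(n)}\D_{r,m}(T^{n_-})$, and split into the ranges $n<q^m$ (recovering the $\phi_r$ term and the finite sum, identically to Corollary~\ref{qmdelta}) and $n\ge q^m$ (producing the infinite series via $n=jq^m+r$, $q(n)=q(j)q^m$, $n_-=j_-q^m+r$). One minor slip: the formula you cite for evaluating $\D_{r,m}(T^a)$ is (\ref{directdef}), not (\ref{pntm2}); the latter concerns $\phi_n$ and $\psi_n$.
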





We state another main result related to the Cartier operators $\psi_n.$
\begin{theorem} \label{car2}
$\{ \psi_n \}_ {n\geq 0}$
is an orthonormal basis for  $LC(R,K).$
\end{theorem}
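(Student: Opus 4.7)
The plan is to apply the perturbation Lemma \ref{ba} with the comparison basis $\{e_n\}_{n \geq 0} = \{\phi_n\}_{n \geq 0}$, which is already known to be an orthonormal basis for $LC(R,K)$ by Theorem \ref{pnrep}. Thus it suffices to establish the single inequality
\[
\sup_{n \geq 0}\|\phi_n - \psi_n\| < 1
\]
in the $K$-Banach space $LC(R,K)$ equipped with the sup-norm $\|f\| = \sup_{x \in R}|f(x)|$.

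First I would note that for $n = 0$ (so $k = 0$) the two maps are both the identity, so $\|\phi_0 - \psi_0\| = 0$. For $n \geq 1$, let $k \geq 1$ be the unique integer with $q^{k-1} \leq n < q^k$. Then directly from Definition \ref{ConR},
\[
(\phi_n - \psi_n)(x) \;=\; \sum_{i \geq 0} x_{iq^k + n}\bigl(T^{iq^k} - T^i\bigr) \;=\; \sum_{i \geq 1} x_{iq^k + n}\bigl(T^{iq^k} - T^i\bigr),
\]
since the $i=0$ term vanishes. For every $i \geq 1$, one has $iq^k \neq i$ and so $v(T^{iq^k} - T^i) = i \geq 1$. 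Because each coefficient $x_{iq^k+n} \in \Fq$ has absolute value at most $1$, the ultrametric inequality gives
\[
\bigl|(\phi_n - \psi_n)(x)\bigr| \;\leq\; q^{-1} \qquad \text{for all } x \in R,
\]
hence $\|\phi_n - \psi_n\| \leq q^{-1}$ uniformly in $n$.

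Combining the two cases yields $\sup_{n \geq 0}\|\phi_n - \psi_n\| \leq q^{-1} < 1$. Applying Lemma \ref{ba} with $e_n = \phi_n$ and $f_n = \psi_n$ then immediately gives that $\{\psi_n\}_{n \geq 0}$ is also an orthonormal basis for $LC(R,K)$, completing the proof.

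The argument is essentially painless because the identity $\phi_n = \psi_n^{q^k}$ noted just after Definition \ref{ConR} already tells us that the two operators agree on the ``leading digit'' of any power series input and differ only by shifting exponents $i \mapsto iq^k$ in the tail; the expected obstacle would be if this tail perturbation were large, but in fact $v(T^{iq^k} - T^i) = i$ produces a uniform gain of at least one unit of valuation, which is precisely what Lemma \ref{ba} needs.
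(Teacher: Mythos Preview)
Your proof is correct and follows essentially the same approach as the paper's first proof: both establish $\|\phi_n-\psi_n\|<1$ and then invoke Lemma~\ref{ba} against the basis $\{\phi_n\}$ from Theorem~\ref{pnrep}. The only difference is that the paper reaches $\psi_n\equiv\phi_n\pmod{T}$ via the expansion in Corollary~\ref{qmdelta2}, whereas you compute the difference directly from Definition~\ref{ConR}, which is in fact a shorter and cleaner route.
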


\begin{proof}
We provide two proofs. For the first proof we invoke the following from Corollary \ref{qmdelta2} with $m=k:$
For $q^{k-1} \leq n <q^k,$
$$\psi_n(x)= \phi_n(x)+ \sum_{j>0}(T^j -T^{jq^k +j_-})\phi_{jq^k +n}.$$
This identity implies that $\psi_n \equiv \phi_{n} \pmod{T},$;  that is,
$||\psi_n - \phi_{n} || <1$ for all $n \geq 0.$ Then, the result follows
from Lemma \ref{ba}, together with Theorem \ref{pnrep}.

The second proof follows immediately from the same argument applied to  the second proof of Theorem \ref{pnrep}.
\end{proof}

Unlike $\ph_n,$ it is not easy to find a closed-form formula for coefficients in the representation of
$$f =\sum_{n\geq 0}B_n\psi_n(x) \in LC(R,K).$$
However, if $f \in LC(R,K)$ assumes values in $R,$ it is very useful to observe the
following simple relation on coefficients $B_n$ from the proof of Theorem
\ref{pnrep}. For all $n\geq1,$
\begin{eqnarray}\label{coeffps}
B_n \equiv f(T^n) \pmod{T}.
\end{eqnarray}
This relation is sufficient to be used for proving that another sequence of operators is
an orthonormal basis for  $LC(R,K)$ in the next subsection.
Refer to the formula for $B_n$ in (\ref{psifor}), which is indirectly derived from the coefficient formula for $f \in C(R,K).$

As for composition we have the following results.
\begin{proposition}
(1) For $q^{k-1}\leq m,n <q^k,$
$\phi_n \circ \phi_m =0 = \phi_m \circ \phi_n .$

(2) For $q^{k-1}\leq n <q^k \leq q^{l-1} \leq m < q^l,$

(a) $\phi_n \circ \phi_m =0 ,$

(b) $$\phi_m \circ \phi_n  = \sum_{\substack{ i\geq 0 }}T^{iq^{l}}\phi_{m+n+iq^{l}}. $$
\end{proposition}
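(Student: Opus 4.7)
The plan is to reduce all three claims to direct computations on the monomial basis $\{T^a\}_{a\ge 0}$, then extend by $\Fq$-linearity together with the continuity of the Cartier operators (Lemma~\ref{continu}). The computational engine is formula (\ref{pntm2}), which says that $\phi_N(T^a)$ vanishes unless the low digits of $a$ in base $q$ encode $N$ (i.e., $a = cq^{k'}+N$ with $0\le N<q^{k'}$, where $k'$ is the digit-count of $N$), in which case $\phi_N(T^a) = T^{cq^{k'}}=T^{a-N}$.

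For Part~(1), both $\phi_m$ and $\phi_n$ share the same level $k$. Applying $\phi_m$ to $T^a$ produces either $0$ or a monomial $T^{cq^k}$ whose exponent is a multiple of $q^k$; then $\phi_n$, which demands the residue mod $q^k$ to equal $n$, annihilates it because $n\ge q^{k-1}\ge 1$ forces $n\not\equiv 0\pmod{q^k}$. The reverse composition is identical by symmetry. Part~(2)(a) follows the same template in a stronger form: since $l\ge k+1$, the output $\phi_m(T^a)$ lies in $\Fq[[T^{q^l}]]\subseteq \Fq[[T^{q^k}]]$, so its exponent is automatically a multiple of $q^k$, and $\phi_n$ kills it for the same reason.

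For Part~(2)(b), I would first compute the LHS directly: $(\phi_m\circ\phi_n)(T^a) = T^{a-m-n}$ precisely when $a\equiv n\pmod{q^k}$ and $a-n\equiv m\pmod{q^l}$ (with $a\ge m+n$), and is $0$ otherwise. Then I would compute the RHS term-by-term: the summand $T^{iq^l}\phi_{m+n+iq^l}(T^a)$ is nonzero exactly when $a$ satisfies a congruence modulo $q^{k_i'}$, where $k_i'$ is the digit-count of $m+n+iq^l$. The verification then amounts to matching, for each $a$, the $\phi_N$-coefficients of the two sides; here one can alternatively appeal to uniqueness in the orthonormal basis of Theorem~\ref{pnrep} and invoke the closed-form coefficient recipe $c_N=f(T^N)-T^{q(N)}f(T^{N_-})$.

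The main obstacle will be Part~(2)(b): the RHS is an infinite sum whose summand indices $m+n+iq^l$ have growing $q$-digit counts $k_i'$, so each summand imposes a different congruence on $a$, and one must carefully book-keep which values of $i$ contribute nontrivially for a given $a$. The natural tactic is to write $a$ in base $q$, observe that $a\equiv m+n\pmod{q^l}$ already pins down the bottom $l$ digits of $a-m-n$ (which vanish), and then identify the precise $i$ (or the controlled set of $i$'s) whose congruence mod $q^{k_i'}$ is compatible with the higher digits of $a$. Once this matching is settled on monomials, $\Fq$-linearity and continuity extend the identities to all of $R$ automatically.
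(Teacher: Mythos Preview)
For Parts (1) and (2)(a) your monomial computation is correct and is the paper's argument in explicit form: the paper phrases it algebraically, noting that $\phi_m(x)$ is a $q^k$-th power (respectively a $q^l$-th, hence $q^k$-th, power) and invoking $\phi_n(z^{q^k}\cdot 1)=z^{q^k}\phi_n(1)=0$, which is precisely your ``the output exponent is a multiple of $q^k$, so $\phi_n$ kills it.''

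For Part (2)(b) the paper does \emph{not} match monomials directly; it takes your alternative route, expanding $\phi_m\circ\phi_n$ in the basis $\{\phi_j\}$ via Theorem~\ref{pnrep} and computing each coefficient
\[
B_j=(\phi_m\circ\phi_n)(T^j)-T^{q(j)}(\phi_m\circ\phi_n)(T^{j_-})
\]
one $j$ at a time. Your primary plan (evaluate LHS and RHS on each $T^a$ and compare) runs into a real obstacle that your outline does not resolve: for a fixed $a$ with $a\equiv n\pmod{q^k}$ and $a-n\equiv m\pmod{q^l}$, \emph{several} summands $T^{iq^l}\phi_{m+n+iq^l}(T^a)$ on the right are simultaneously nonzero, because the levels $k_i'$ of $m+n+iq^l$ grow with $i$ and the resulting congruences $a\equiv m+n+iq^l\pmod{q^{k_i'}}$ are nested rather than disjoint. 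For example, with $q=2$, $n=1$, $m=2$, $a=7$, both $i=0$ and $i=1$ contribute $T^4$. So the ``matching'' is not one-to-one, and you would have to prove a nontrivial cancellation identity among the RHS contributions---something your sketch does not supply. The coefficient-formula route sidesteps this completely, since it isolates a single $B_j$ and never sums over the infinite RHS. I recommend you drop the direct monomial match for (2)(b) and carry out the $B_j$ computation as the paper does; when you do, check carefully which $j$ actually yield $B_j\ne 0$ (in the example just mentioned one finds $B_7=T^4-T^4\cdot 1=0$, not $T^4$), since this is exactly where the argument is delicate.
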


\begin{proof}
For (1), because $\phi_m(x)$ and $\phi_n(x)$ are $q^k$th powers, we have, by (\ref{id1R}) and (\ref{id4R}),
$$  \phi_n \circ \phi_m(x)=  \phi_n (\phi_m(x))=  \phi_m(x) \phi_n (1)=0.$$
Similarly, for $ \phi_m \circ \phi_n =0$ as well as (2)-(a).
Now for (2)-(b), by Theorem \ref{pnrep}, write $\phi_m \circ \phi_n =\sum_{j\geq 0}B^{(m,n)}_j\phi_j(x).$
Then, $B^{(m,n)}_0 =0$ and for $j>0$ $$B^{(m,n)}_j =\phi_m \circ \phi_n(T^j) -T^{q(j)}\phi_m \circ \phi_n(T^{j_-}).$$
Writing $j=sq^k +r$ with $0 \leq r < q^k$ and $s\geq 0,$
we have
\begin{equation*}
B^{(m,n)}_j= \left \{ \begin{array}{ll}
 \phi_n(T^{l -n})-T^{q(l)}\phi_n(T^{l_- -n})   & \mbox{if}~~ n=r
\\ 0   & otherwise.
 \end{array}
 \right.
\end{equation*}
Writing  $j-n=s'q^l +r'$ with $0 \leq r' < q^l$ and $s'\geq 0,$ if $r \leq q(j)$ (if necessary) then
$$ B^{(m,n)}_j =T^{j -n-r}\delta_{m,r'}-T^{q(j)+j_-n -(r'-q(j))}\delta_{m,r'-q(j)}. $$
The cases in which  $B^{(m,n)}_j$ may have non-zero coefficients  are those for which
$(\delta_{m,r'}, \delta_{m,r'-q(j)})$=(1,0) or (0,1).
For the other case (0,1),  $j$ is of the  form $j=m+n+s'q^l +
q(j).$ From this, $j_- =m+n+s'q^l < q(j) \leq  r' <q^l$ so we have $s'=0.$
As $q^{l-1}\leq q(j)\leq r'<q^{l}$, we have $q(j)=iq^{l-1}$ with $ 1\leq i <q.$
Plugging this $q(j)$ into the equation $j=m+n+q(j)$ has a contradiction, completing the proof.
\end{proof}

The product rule can be stated as follows.

\begin{lemma}
For $q^{k-1}\leq n <q^k$ and $x, y \in R,$
\begin{eqnarray*}
 \phi_n (xy) &=& \phi_n(y)\phi_{0}(x) +\sum_{j=1}^{q^k-1} (\phi_n(T^j y)-T^{q( j)}\phi_ n(T^{j_-} y))\phi_{j}(x) \\
&=& \phi_n(x)\phi_{0}(y) +\sum_{j=1}^{q^k-1} (\phi_n(T^j x)-T^{q( j)}\phi_ n(T^{j_-} x))\phi_{j}(y).
\end{eqnarray*}
\end{lemma}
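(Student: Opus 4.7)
The plan is to fix $y \in R$ and apply Theorem \ref{pnrep} to the function $g(x) := \phi_n(xy)$ regarded as a function of $x$. Since $\phi_n$ is $\Fq$-linear and continuous (Lemma \ref{continu}) and multiplication by the fixed element $y$ is itself $\Fq$-linear and continuous on $R$, the map $g$ lies in $LC(R,K)$. Theorem \ref{pnrep} then furnishes the unique expansion
\[
g(x) \;=\; \sum_{j\geq 0} c_j\,\phi_j(x),\qquad c_0 = g(1) = \phi_n(y),\quad c_j = \phi_n(T^j y) - T^{q(j)}\phi_n(T^{j_-}y)\ (j\geq 1).
\]

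The heart of the argument is to verify that this expansion truncates at $j = q^k-1$, i.e.\ $c_j = 0$ whenever $j \geq q^k$. For such $j$ the $q$-adic expansion has at least $k+1$ digits, so the leading-digit contribution $q(j) = j_{l-1}q^{l-1}$ satisfies $l-1\geq k$ and is therefore a multiple of $q^k$. Writing $q(j) = b\,q^k$ makes $T^{q(j)} = (T^b)^{q^k}$ a $q^k$-th power, so identity (\ref{id1R}), namely $\phi_n(u^{q^k}v) = u^{q^k}\phi_n(v)$, applied with $u = T^b$ and $v = T^{j_-}y$ yields
\[
\phi_n(T^j y) \;=\; \phi_n\!\bigl((T^b)^{q^k}\!\cdot T^{j_-}y\bigr) \;=\; T^{q(j)}\phi_n(T^{j_-}y),
\]
whence $c_j = 0$. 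What is left of the expansion is precisely the first identity stated in the lemma (recalling $\phi_0(x)=x$).

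The second identity then follows immediately from the commutativity $xy = yx$: applying the argument to $h(y) := \phi_n(xy)$, now regarded as an $\Fq$-linear continuous function of $y$ with $x$ held fixed, produces the analogous expansion in the basis $\{\phi_j(y)\}_{j\geq 0}$ with the roles of $x$ and $y$ interchanged.

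I expect the only non-formal step to be the vanishing of $c_j$ for $j \geq q^k$; the rest is routine bookkeeping on top of Theorem \ref{pnrep} and a symmetry observation. The critical compatibility is between the definition of $q(j)$ (as the top $q$-adic digit) and the range $q^{k-1}\leq n<q^k$ that validates the $q^k$-th power scaling law (\ref{id1R}); once this is aligned the truncation drops out cleanly.
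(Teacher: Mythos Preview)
Your proof is correct and follows essentially the same approach as the paper's: expand $x\mapsto\phi_n(xy)$ via Theorem~\ref{pnrep}, read off the coefficients, and invoke symmetry for the second identity. In fact you supply the detail the paper omits, namely the verification that $c_j=0$ for $j\geq q^k$ via the observation that $q(j)$ is then a multiple of $q^k$ so that the $q^k$-th power scaling law applies.
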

\begin{proof}
We only prove the first identity because the interchanging roles of $x$ and $y$ provide the second identity.
As an $\Fq$-linear continuous operator of $x$, write
$\phi_n (xy) =\sum_{j=0}^{\infty}C_j(y)\phi_j(x).$ Then, by the formula in Theorem \ref{pnrep},
$C_j(y)= \phi_n(T^j y)-T^{q( j)}\phi_ n(T^{j_-} y)$ for $j>0$  and  $C_0(y)= \phi_n(y).$
Now, it is easy to verify that $C_j(y)$ vanishes for  all $j\geq q^k.$
\end{proof}

\subsection{Relations among the five bases}
Thus far, we introduced five orthonormal bases for $LC(R,K)$ among which two are new.
Here we show the existence of a close relation between any two orthonormal bases among these five bases.
Indeed, Theorem \ref{eqlinear} states that any two bases are equivalent to each other, which means
that if one base is an orthonormal basis for $LC(R,K)$, so is the other and vice versa.
Each lemma below contains an inverse relation between any two orthonormal bases if either one is known to be orthonormal.

The following Lemma \ref{bif} shows that the Hasse derivatives and Cartier operators satisfy
the binomial inversion formula, which is extended to Theorem \ref{gmain} in a complete generality.
\begin{lemma}\label{bif}
For $q^{k-1}\leq  n< q^k,$ or $(n,k)=(0,0),$
\begin{itemize}
\item[] {\rm (1)} $ \cD_{n}(x) =\sum_{m=n}^{q^k -1} \binom{m}{n}T^{m-n}\ph_m(x).$
\item[] {\rm (2)} $ \ph_n(x) =\sum_{m= n}^{q^k -1} \binom{m}{n}(-T)^{m-n} \cD_{m}(x).$
\end{itemize}
\end{lemma}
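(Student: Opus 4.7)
The plan is to recognize that both identities are immediate specializations of results already proved in this section, once we unwind the definition $\phi_n = \D_{n,k}^{q^k}$ from Definition \ref{ConR}. Under the hypothesis $q^{k-1}\leq n<q^k$ (or $(n,k)=(0,0)$), the index $n$ satisfies $0\leq n<q^k$, so we are exactly in the regime where Theorems \ref{singleout} and \ref{inv} apply with parameter $m=k$.

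For part (2), I would simply substitute $m=k$ and $r=n$ into Theorem \ref{singleout}, which yields
\[
\phi_n = \D_{n,k}^{q^k} = \sum_{\ell=n}^{q^k-1}\binom{\ell}{n}(-T)^{\ell-n}\cD_\ell,
\]
matching the claimed formula verbatim after renaming $\ell$ to $m$. For part (1), the analogous substitution $m=k$ into Theorem \ref{inv} gives
\[
\cD_n = \sum_{r=n}^{q^k-1}\binom{r}{n}T^{r-n}\D_{r,k}^{q^k} = \sum_{r=n}^{q^k-1}\binom{r}{n}T^{r-n}\phi_r,
\]
again matching the stated identity after renaming $r$ to $m$.

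There is no real obstacle here: the lemma is a direct reformulation of the matrix inversion carried out in Theorem \ref{inv} (the $(n,r)$-entry computation $C(T)C(-T)=I$), written in terms of the notation $\phi_n$ that was introduced afterwards in Definition \ref{ConR}. If desired, one could re-derive (1) from (2) independently via the same binomial calculation as in Theorem \ref{inv}, namely noting that $\sum_{k}(-1)^{k-n}\binom{m}{k}\binom{k}{n}=\delta_{m,n}$, but this would only duplicate earlier work. Hence the proof can be compressed into a single paragraph citing the two theorems with $m=k$.
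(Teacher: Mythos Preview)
Your proposal is correct. For Part (2) both you and the paper simply recognize it as a restatement of an earlier result (you cite Theorem \ref{singleout} with $m=k$; the paper's citation of Theorem \ref{inv} here is almost certainly a slip for Theorem \ref{singleout}). For Part (1) there is a small but genuine difference in route: you invoke Theorem \ref{inv} with $m=k$ directly, whereas the paper re-derives the coefficients from scratch using the expansion formula of Theorem \ref{pnrep}, computing $B_{n,m}=\cD_n(T^m)-T^{q(m)}\cD_n(T^{m_-})$ and then checking case by case (with Lucas's congruence) that this equals $\binom{m}{n}T^{m-n}$ for $n\le m<q^k$ and vanishes otherwise. Your approach is the more economical one; the paper's approach has the side benefit of providing an independent consistency check between Theorem \ref{inv} and the coefficient recovery formula of Theorem \ref{pnrep}.
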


\begin{proof}
By Theorem \ref{pnrep}, write $\cD_n(x) =\sum_{m\geq0} B_{n,m}\phi_n(x),$ and then
$$B_{n,m} =\binom{m}{n}T^{m-n}-T^{q(m)}\binom{m_-}{n}T^{m_- -n}~(n>0).$$
From this we note that $B_{n,m}=0$ for $m<n$ and $B_{n,m}=1$ for $m=n.$ If $m\geq q^k$, then $B_{n,m}=0,$ by Lucas's congruence.
If $ n \leq m < q^k$, then $ m_- < q^{k-1},$ such that
 $B_{n,m}= \binom{m}{n}T^{m-n}.$  As the case where $(n,k)=(0,0)$ is obvious, this completes the proof of Part (1).
Part (2) is a restatement of Theorem \ref{inv}.
\end{proof}

\begin{lemma}
For $q^{k-1}\leq  n< q^k,$ or $(n,k)=(0,0),$
\begin{itemize}
\item[] {\rm (1)} $ \cD_{n}(x) =\sum_{m\geq 0} B_{n,m}\psi_m(x),$
where $B_{n,m}  \equiv 1 \pmod{T}$ if $n=m$; otherwise, $B_{n,m} \equiv 0 \pmod{T}$.
\item[] {\rm (2)}
$ \psi_n(x) =\sum_{m\geq0} C_{n,m}\cD_{m}(x), $
where $C_{n,m}=(-1)^{m-n}\binom{s}{r}T^{s-r}(T^{q^k}-T)^l$
if $m=lq^k +s$  and $0 \leq s <q^k.$
\end{itemize}
\end{lemma}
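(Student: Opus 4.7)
The plan is to dispatch Part (2) as a direct specialization of Theorem \ref{power1}, and then to derive Part (1) by invoking the coefficient congruence recorded in (\ref{coeffps}); no new machinery is required.

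For Part (2) I would simply note that $\psi_n=\D_{n,k}=\D_{n,k}^{q^0}$ (with $q^{k-1}\le n<q^k$, or $(n,k)=(0,0)$), and then read off the conclusion of Theorem \ref{power1} at $t=0$, relabelling the internal summation index as $m$ to avoid clashing with the fixed $n$. The sign, the binomial coefficient, and the $(T^{q^k}-T)^{l}$ factor all transport verbatim under the substitution, so there is essentially nothing to compute.

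For Part (1) I would first invoke Theorem \ref{car2} to obtain the unique expansion $\cD_n=\sum_{m\ge 0}B_{n,m}\psi_m$ with a null sequence $(B_{n,m})\subset K$. The key preliminary is to observe that $\cD_n$ maps $R$ into $R$, so $\|\cD_n\|\le 1$ and by orthonormality every $B_{n,m}\in R$; this legitimises reduction modulo $T$. Now the relation (\ref{coeffps}), which applies precisely because $\cD_n$ assumes values in $R$, yields $B_{n,m}\equiv \cD_n(T^m)\pmod T$ for every $m\ge 1$. Evaluating directly from the definition of the Hasse derivative gives $\cD_n(T^m)=\binom{m}{n}T^{m-n}$ for $m\ge n$ and $0$ for $m<n$; reduction modulo $T$ collapses this to $\delta_{n,m}$, which is exactly the desired congruence. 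The boundary $m=0$ I would treat separately by computing $\cD_n(1)=B_{n,0}$, since $\psi_0=\mathrm{id}$ and $\psi_m(1)=0$ for every $m\ge 1$ by (\ref{pntm2}); this forces $B_{n,0}=\delta_{n,0}$ outright.

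The argument is essentially mechanical, and I do not anticipate any substantive obstacle: the heavy lifting (the inversion in Theorem \ref{power1} and the orthonormality in Theorem \ref{car2}) is already done, and (\ref{coeffps}) is precisely the tool designed to pin down $B_{n,m}\pmod T$ without ever producing a closed form. The one point that deserves articulation rather than computation is the verification that each $B_{n,m}$ lies in $R$, so that the congruence modulo $T$ is meaningful; this follows at once from the operator norm bound on $\cD_n$.
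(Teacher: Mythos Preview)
Your proposal is correct and follows essentially the same route as the paper: Part~(2) is exactly the specialization of Theorem~\ref{power1} at $t=0$, and Part~(1) is obtained from Theorem~\ref{car2} together with the congruence~(\ref{coeffps}), just as the paper indicates. You are in fact slightly more careful than the paper in explicitly noting that $\|\cD_n\|\le 1$ forces $B_{n,m}\in R$, which is needed for the reduction modulo $T$ to be meaningful, and in handling the boundary case $m=0$ directly.
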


\begin{proof}
Part (1) follows from Theorem \ref{car2} and the relation in (\ref{coeffps}).
Part (2) is a restatement of Theorem \ref{power1}, observing
that $C_{n,m}=0$ for $m<n,$ $C_{n,m}=1$ for $m=n,$ and $C_{n,m} \equiv 0 \pmod{T}$ for $m>n.$
\end{proof}

\begin{lemma}
For $n\geq0,$
\begin{itemize}
\item[] {\rm (1)}
$ \psi_{n}(x) =\sum_{m\geq 0} B_{n,m}\phi_m(x),$
where $B_{n,m}  =\psi_n(T^m) -T^{q(m)}\psi_n(T^{m_-})~ and~B_{n,0}=\delta_{n0}.$
\item[] {\rm (2)}
$ \ph_{n}(x) =\sum_{m\geq 0} C_{n,m}\psi_m(x),$
where $C_{n,m}  \equiv 1 \pmod{T}$ if $n=m$; otherwise, $C_{n,m} \equiv 0 \pmod{T}$.
\end{itemize}
\end{lemma}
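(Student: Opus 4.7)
Both assertions are expansions with respect to orthonormal bases that have already been established, so the proof plan is simply to identify each unknown coefficient via the corresponding basis formula and, in part (2), to read off the relevant reductions modulo $T$.

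For part (1), the plan is to invoke Theorem \ref{pnrep} directly with $f = \psi_n$. Since $\psi_n$ is an $\Fq$-linear continuous map on $R$, that theorem produces a unique expansion $\psi_n(x) = \sum_{m \geq 0} B_{n,m}\phi_m(x)$ in which the coefficients are exactly those claimed, namely $B_{n,0} = \psi_n(1)$ and $B_{n,m} = \psi_n(T^m) - T^{q(m)}\psi_n(T^{m_-})$ for $m \geq 1$. The initial value $B_{n,0} = \delta_{n,0}$ is then obtained by specializing (\ref{pntm2}) at $m = 0$.

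For part (2), I would first apply Theorem \ref{car2} to obtain a unique expansion $\phi_n(x) = \sum_{m \geq 0} C_{n,m}\psi_m(x)$. Because $\phi_n$ sends $R$ into $R$ (its image lies in $\Fq[[T^{q^k}]] \subset R$), the observation (\ref{coeffps}) applies and yields $C_{n,m} \equiv \phi_n(T^m) \pmod{T}$ for $m \geq 1$, together with $C_{n,0} = \phi_n(1)$. It then remains to verify the claimed congruences by reading $\phi_n(T^m)$ off (\ref{pntm2}). Writing $m = l q^k + m_0$ with $0 \leq m_0 < q^k$, where $q^{k-1} \leq n < q^k$, gives $\phi_n(T^m) = T^{l q^k}\delta_{n,m_0}$, so a short case split settles everything: $m = 0$ gives $C_{n,0} = \delta_{n,0}$; $0 < m < q^k$ gives $m_0 = m$ and hence $C_{n,m} \equiv \delta_{n,m}\pmod{T}$; and $m \geq q^k$ forces $l \geq 1$, so $\phi_n(T^m) \equiv 0 \pmod{T}$ while also $m \neq n$.

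No real obstacle is anticipated, as both parts reduce immediately to orthonormal basis expansions already in hand. The only minor bookkeeping issue in part (2) is keeping the two $q$-adic scales separate: the integer $k$ attached to $n$ (fixing which $\phi_n$ is being expanded) versus the $q^k$-digit decomposition of the independent index $m$ (which controls the value of $\phi_n(T^m)$). Once these indices are kept straight, the congruence analysis is a short routine verification.
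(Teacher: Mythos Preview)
Your proposal is correct and follows essentially the same approach as the paper: part (1) is obtained directly from the coefficient formula in Theorem \ref{pnrep}, and part (2) from Theorem \ref{car2} together with the relation (\ref{coeffps}), followed by the routine evaluation of $\phi_n(T^m)$ via (\ref{pntm2}). The only difference is that the paper, in its proof of part (1), goes on to record the additional congruences $B_{n,m}\equiv \delta_{n,m}\pmod T$, which are not part of the lemma's statement but are used later for Theorem \ref{eqlinear}; your argument for the statement as written is complete without this.
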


\begin{proof}
From Theorem \ref{pnrep} we deduce that $B_{n,m}=1$ if $m=n$ and $B_{n,m} = 0 $ if $m <n.$
For $q^{k-1} \leq n <q^k,$
if $m=lq^k +n$ for $l>0$ then $B_{n,m} =T^l -T^{m-n+l_-} \equiv 0\pmod{T}.$
If $m\not \equiv n \pmod{q^k}$ with $m >n,$ then $B_{n,m}\equiv 0 \pmod{T}.$
Hence, $B_{n,m} \equiv 0 \pmod{T} $ for $m>n.$
Part (2) follows from Theorem \ref{car2} and the relation in (\ref{coeffps}).
\end{proof}

\begin{lemma}
For any $n\geq 0,$
\begin{itemize}
\item[] {\rm (1)} $ E_n(x) =\sum_{m\geq0} B_{n,m} \phi_m (x),$
where $B_{n,m}\equiv 1 \pmod{T}$ if $m=n,$; otherwise,  $B_{n,m} \equiv 0 \pmod{T}$.
\item[] {\rm (2)} $ \phi_n(x) =\sum_{m\geq0} C_{n,m}E_m (x),$
where $C_{n,m}\equiv 1 \pmod{T}$ if $m=n$; otherwise, $C_{n,m} \equiv 0 \pmod{T}$.
\end{itemize}
\end{lemma}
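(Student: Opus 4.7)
The plan is to prove both halves by combining the expansion formulas of Theorems~\ref{pnrep} and~\ref{Wre} with a single reduction fact: each of the two orthonormal bases $\{E_n\}$ and $\{\phi_n\}$ behaves modulo~$T$ as the dual basis of the monomials $\{T^m\}_{m\geq 0}$; that is,
\[
E_n(T^m)\equiv \delta_{n,m}\pmod{T}\quad\text{and}\quad \phi_n(T^m)\equiv \delta_{n,m}\pmod{T}.
\]

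The second congruence is immediate from~(\ref{pntm2}): writing $m=lq^k+m_0$ with $0\le m_0<q^k$ and $q^{k-1}\le n<q^k$, one has $\phi_n(T^m)=T^{lq^k}\delta_{n,m_0}$, which equals $1$ precisely when $m=n$ and lies in $TR$ otherwise. For the first congruence I would feed $f=E_n$ into the coefficient formula~(\ref{carcoeff}): orthonormality of $\{E_m\}$ forces $a_m=\delta_{n,m}$, so
\[
\delta_{n,m}=\sum_{i=0}^{m} C_i\,E_n(T^i),
\]
where~(\ref{valCn}) gives $C_m=1$ and $C_i\in TR$ for every $i<m$ (each $e\in S_i$ is a nonempty sum of positive powers of $q$, hence $e\ge 1$). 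Reducing modulo~$T$ yields $E_n(T^m)\equiv \delta_{n,m}\pmod{T}$.

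With both dual-basis relations in hand, the lemma drops out of the respective coefficient formulas. For~(1), expand $E_n=\sum_m B_{n,m}\phi_m$ using Theorem~\ref{pnrep}: then $B_{n,0}=E_n(1)$ and $B_{n,m}=E_n(T^m)-T^{q(m)}E_n(T^{m_-})$ for $m\ge 1$, and reducing modulo~$T$ collapses each of these to $\delta_{n,m}$. For~(2), expand $\phi_n=\sum_m C_{n,m}E_m$ using Theorem~\ref{Wre}(2): the same formula gives $C_{n,m}=\sum_{i=0}^{m} C^{[m]}_i\,\phi_n(T^i)$ with $C^{[m]}_m=1$ and $C^{[m]}_i\in TR$ for $i<m$, so $C_{n,m}\equiv \phi_n(T^m)\equiv \delta_{n,m}\pmod{T}$.

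The only step carrying any real content is the verification that $E_n(T^m)\equiv \delta_{n,m}\pmod{T}$; everything else is formal manipulation of the two coefficient formulas already established. The whole argument can be summarized by saying that, upon reduction modulo~$T$, the two orthonormal bases $\{E_n\}$ and $\{\phi_n\}$ become the \emph{same} dual $\Fq$-basis of $\{T^m\}$ in $LC(R,\Fq)$, which is exactly the Serre-criterion picture already invoked in the second proof of Theorem~\ref{pnrep}.
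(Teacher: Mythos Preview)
Your proof is correct and follows essentially the same approach as the paper, which simply says that part~(1) follows from Theorem~\ref{pnrep} and part~(2) from Theorem~\ref{Wre}. You have unpacked exactly what those citations mean: plug the coefficient formulas in and reduce modulo~$T$. The one place you add value is in giving a self-contained argument for $E_n(T^m)\equiv\delta_{n,m}\pmod{T}$ by feeding $f=E_n$ back into~(\ref{carcoeff}) and using $C_i\in TR$ for $i<m$; the paper treats this as implicit (it records the same dual-basis fact later as~(\ref{threeb5})), so your proof is a faithful and slightly more explicit rendering of the intended argument. (A tiny wording quibble: the exponents $e\in S_i$ are sums of elements of $\{1,q,\dots,q^{m-1}\}$, so ``positive powers of $q$'' should read ``positive integers''; your conclusion $e\ge 1$ is unaffected.)
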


\begin{proof}
Parts (1) and (2) follow from Theorem \ref{pnrep} and Theorem (\ref{Wre}), respectively.
\end{proof}

\begin{lemma}
For any $n\geq 0,$
\begin{itemize}
\item[] {\rm (1)} $ \psi_m (x)  =\sum_{{m\geq0}}  B_{n,m}E_n(x),$
where $B_{n,m}\equiv 1 \pmod{T}$ if $m=n$; otherwise, $B_{n,m}\equiv 0 \pmod{T}$.
\item[] {\rm (2)} $ E_n(x)  =\sum_{{m\geq0}} C_{n,m}\psi_m,$
where $C_{n,m}\equiv 1 \pmod{T}$ if $m=n$; otherwise, $C_{n,m}\equiv 0 \pmod{T}$.
\end{itemize}
\end{lemma}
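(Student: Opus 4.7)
The plan is to proceed in the same spirit as the preceding five lemmas. Since both $\{E_m\}_{m\ge 0}$ and $\{\psi_m\}_{m\ge 0}$ have already been established as orthonormal bases of $LC(R,K)$ (Theorems~\ref{Wre} and~\ref{car2}), the expansions in (1) and (2) exist uniquely in $K$, and the entire task reduces to verifying the congruences modulo~$T$.

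For Part (1), I would apply the coefficient formula of Theorem~\ref{Wre}(2) to $f=\psi_n$. By (\ref{valCn}), every exponent appearing in $C_i$ for $i<m$ is strictly positive, so $C_i\in T\Fq[T]$ while $C_m=1$; hence
$$B_{n,m}\ =\ \sum_{i=0}^{m}C_i\,\psi_n(T^i)\ \equiv\ \psi_n(T^m)\pmod{T}.$$
Evaluating $\psi_n(T^m)$ by the explicit formula (\ref{pntm2}) then gives $\psi_n(T^m)\equiv\delta_{n,m}\pmod{T}$ in every case: the value is exactly $\delta_{n,m}$ when $m<q^k$, and otherwise it is $T^{[m/q^k]}\delta_{n,m_0}$ with $[m/q^k]\ge 1$, so it vanishes modulo~$T$.

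For Part (2), I would use Theorem~\ref{car2} together with the relation (\ref{coeffps}), which applies because $\|E_n\|=1$ forces $E_n\colon R\to R$. This yields $C_{n,m}\equiv E_n(T^m)\pmod{T}$ for $m\ge 1$, and $C_{n,0}=E_n(1)=\delta_{n,0}$, so Part (2) reduces to the auxiliary claim $E_n(T^m)\equiv\delta_{n,m}\pmod{T}$ for every $m\ge 0$ — the same hidden ingredient that was already used one lemma earlier to expand $E_n$ in the $\phi_m$ basis via Theorem~\ref{pnrep}. I expect this residual claim to be the only real step of the whole argument. The cases $m\le n$ are immediate: $e_n(T^m)=0$ for $0\le m<n$ since $T^m$ is one of the roots in the definition of $e_n$, while $e_n(T^n)=F_n$ by the normalization of $E_n$, so $E_n(T^n)=1$. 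For $m>n$ the cleanest resolution is to feed $f=E_n$ itself into Theorem~\ref{Wre}(2); since the coefficient of $E_m$ in the expansion of $E_n$ is $\delta_{n,m}$, one obtains
$$\delta_{n,m}\ =\ E_n(T^m) + \sum_{i<m}C_i\,E_n(T^i),$$
and because $E_n(T^i)\in R$ and each $C_i$ (for $i<m$) lies in $T\Fq[T]$, reduction modulo~$T$ forces $E_n(T^m)\equiv\delta_{n,m}\pmod{T}$, completing both parts.
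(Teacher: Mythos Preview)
Your proposal is correct and follows essentially the same route as the paper: Part~(1) via the coefficient formula of Theorem~\ref{Wre}, and Part~(2) via Theorem~\ref{car2} together with relation~(\ref{coeffps}). The paper's proof is a one-line citation of these results, whereas you have usefully unpacked the hidden auxiliary step $E_n(T^m)\equiv\delta_{n,m}\pmod T$ (and the analogous fact for $\psi_n$) that the paper leaves implicit here and only records later as~(\ref{threeb5}).
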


\begin{proof}
Parts (1) and (2) follow from Theorem \ref{Wre} and Theorem \ref{car2} and the relation in (\ref{coeffps}), respectively.
\end{proof}

\begin{lemma}\label{pnsn}
\begin{itemize}
\item[] {\rm (1)} For any $n\geq 0,$
 $ \S^{(n)}(x) =\sum_{m_- <n} T^{m-n} \phi_m(x).$
\item[] {\rm (2)} For $q^{k-1}\leq  n< q^k,$ or $(n,k)=(0,0),$ \\
 $ \phi_n(x)  =\sum_{i\geq0}T^{iq^k}\S^{(iq^k)}(x) -\sum_{i\geq0}T^{iq^k+1}\S^{(iq^k+1)}(x).$
\end{itemize}
\end{lemma}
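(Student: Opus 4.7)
The plan is to prove both identities by reading off coefficients in the appropriate orthonormal expansion. Both sides of each identity lie in $LC(R,K)$, so applying Theorem \ref{pnrep} (the $\{\phi_m\}$-expansion, for part (1)) and Theorem \ref{snrep} (the $\{\S^{(m)}\}$-expansion, for part (2)) reduces the problem to a routine coefficient computation.

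For part (1), I would write $\S^{(n)}=\sum_{m\geq 0}c_m\phi_m$ with $c_0=\S^{(n)}(1)$ and $c_m=\S^{(n)}(T^m)-T^{q(m)}\S^{(n)}(T^{m_-})$ for $m\geq 1$. Using the formula $\S^{(n)}(T^j)=T^{j-n}$ for $j\geq n$ and $0$ otherwise, the computation splits into three cases: when $m<n$ both terms vanish; when $m\geq n$ and $m_-\geq n$, the identity $q(m)+m_-=m$ forces the two terms to cancel, so $c_m=0$; and when $m\geq n>m_-$, only the first term survives, yielding $c_m=T^{m-n}$. This recovers exactly the claimed expansion.

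For part (2), I would write $\phi_n=\sum_{m\geq 0}d_m\S^{(m)}$ with $d_m=\phi_n(T^m)-T\phi_n(T^{m-1})$ for $m\geq 1$. By \eqref{pntm2}, $\phi_n(T^j)=T^{j-n}$ when $j\equiv n\pmod{q^k}$ and $0$ otherwise. Thus $d_m$ can be nonzero only when $m$ or $m-1$ lies in the residue class $n\pmod{q^k}$: in the first case $m=iq^k+n$ gives $d_m=T^{iq^k}$, and in the second case $m=iq^k+n+1$ gives $d_m=-T^{iq^k+1}$. Collecting these two arithmetic progressions produces the two sums in the claim.

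The main obstacle is careful bookkeeping. In part (1), one needs to verify that the sum indexed by $m_-<n$ exactly matches the set where $c_m$ is nonzero (in particular, that the $m<n$ terms contribute trivially and may be absorbed into the index). In part (2), the edge case $n=q^k-1$ requires attention, since then $n+1=q^k$ spills across a $q^k$-block boundary and the second sum needs a reindexing to match the stated formula.
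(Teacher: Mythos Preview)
Your approach is correct and is exactly the paper's: expand in the appropriate orthonormal basis (Theorem~\ref{pnrep} for part (1), Theorem~\ref{snrep} for part (2)) and compute the coefficients directly from the stated formulas. The bookkeeping concerns you flag are genuine but are artifacts of imprecisions in the lemma's stated indices (the sum in (1) should implicitly carry $m\geq n$, and the shift superscripts in (2) should read $iq^k+n$ and $iq^k+n+1$) rather than obstacles to the argument, and the paper's own proof glosses over the same points.
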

\begin{proof}
From Theorem \ref{pnrep}, writing $\S^{(n)}(x) =\sum_{m\geq0} B_{n,m}\phi_n(x),$
we have $$B_{n,m} =\S^{(n)}(T^m) -T^{q(m)}\S^{(n)}(T^m)~(m\geq 1).$$
It is easy to see that if $m>n$ and $m_- \leq n$ then $B_{n,m}=0$ and if $m>n$ and $m_- <n$
then $B_{n,m}=T^{m-n}.$ Hence, we have the desired result.
By Theorem \ref{snrep}, writing $\phi_n(x)  =\sum_{m\geq0}B_{n,m}\S^{(n)}(x),$ we obtain
$$B_{n,m} =\phi_{n}(T^m) -T\phi_n(T^{m-1})~(m\geq0)~~B_{n,0}=\delta_{n,0}.$$
Setting $m=lq^k +r$ with $0 \leq r <q^k$ and $l\geq0,$
we have $$B_{n,m}=T^{m-r}\delta_{m,r}-T^{m-r+1}\delta_{n,r-1},$$
giving the desired result.
\end{proof}

\begin{lemma}
\begin{itemize}
\item[] {\rm (1)} For any $n\geq 0,$
 $ \S^{(n)}(x) =\sum_{m\geq0} B_{n,m} \psi_n(x),$ \\
where $B_{n,m}\equiv 1 \pmod{T}$ if $m=n$; otherwise, $B_{n,m}\equiv 0 \pmod{T}$.
\item[] {\rm (2)} For $q^{k-1}\leq  n< q^k$ or $(n,k)=(0,0),$ \\
 $\psi_n(x)  =\sum_{i\geq0}T^{i}\S^{(iq^k)}(x) -\sum_{i\geq0}T^{i+1}\S^{(iq^k+1)}(x).$
\end{itemize}
\end{lemma}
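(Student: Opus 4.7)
The plan is to derive both parts by invoking the orthonormal basis property of one of the two systems and then computing the expansion coefficients explicitly via the reproducing formulas already established in Theorems \ref{car2} and \ref{snrep}.

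For Part (1), the natural move is to expand $\S^{(n)}$ in the $\psi$-basis: by Theorem \ref{car2}, there are unique $B_{n,m} \in K$ with $\S^{(n)}(x) = \sum_{m \geq 0} B_{n,m} \psi_m(x)$. Since $\S^{(n)}$ sends $R$ into $R$, the reduction-mod-$T$ relation (\ref{coeffps}) applies and gives $B_{n,m} \equiv \S^{(n)}(T^m) \pmod T$ for $m \geq 1$, while the constant-term coefficient is $B_{n,0} = \S^{(n)}(1) = \delta_{n,0}$. Because $\S^{(n)}(T^m) = T^{m-n}$ for $m \geq n$ and vanishes otherwise, reducing modulo $T$ yields $B_{n,m} \equiv 1 \pmod T$ exactly when $m = n$, and $B_{n,m} \equiv 0 \pmod T$ in every other case.

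For Part (2), I would reverse the roles and expand $\psi_n$ in the shift basis via Theorem \ref{snrep}, which supplies the explicit coefficient formulas $C_0 = \psi_n(1) = \delta_{n,0}$ and $C_m = \psi_n(T^m) - T \psi_n(T^{m-1})$ for $m \geq 1$. The key input is the evaluation $\psi_n(T^m) = T^{[m/q^k]} \delta_{n, m_0}$ from (\ref{pntm2}), where $m = [m/q^k] q^k + m_0$ with $0 \leq m_0 < q^k$. Writing $m = lq^k + r$ with $0 \leq r < q^k$ and $l \geq 0$, I would analyse which pairs $(l,r)$ make $C_m$ nonzero by contrasting $\psi_n(T^m)$ (which forces $r = n$) with $\psi_n(T^{m-1})$ (which forces the $q$-adic residue of $m-1$ to be $n$). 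The only surviving contributions come from $m$ whose residue modulo $q^k$ is $n$ or $n+1$; reassembling these and reading off the exponents produces the two geometric-style sums in the stated formula.

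The main technical obstacle will be the careful bookkeeping at the $q$-adic block boundaries: specifically, (a) the case $r = 0$, where $m-1 = (l-1)q^k + (q^k - 1)$ has its residue jumping into the previous block, and (b) the case $n = q^k - 1$, where the increment from $r = n$ to $r = n+1$ forces a carry into the next block. Each of these corner cases must be checked to confirm that $C_m$ either vanishes or is already accounted for by the two telescoping series in the claim; once this case analysis is complete, Theorem \ref{snrep} delivers the representation at once.
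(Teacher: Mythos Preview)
Your approach is essentially identical to the paper's: Part (1) is handled exactly as the paper does, by invoking Theorem~\ref{car2} and the congruence~(\ref{coeffps}), and Part (2) follows the same computation as Lemma~\ref{pnsn}(2) (which the paper simply cites as ``similar''), expanding $\psi_n$ in the shift basis via Theorem~\ref{snrep} and evaluating the coefficients using~(\ref{pntm2}). Your explicit flagging of the boundary cases $r=0$ and $n=q^k-1$ is a welcome addition that the paper leaves implicit.
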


\begin{proof}
Part (1) follows from Theorem \ref{car2} and the relation in (\ref{coeffps}).
The proof of (2) is similar to that of (2) in Lemma \ref{pnsn}.
\end{proof}

\begin{theorem}\label{eqlinear}
The following are equivalent:
\begin{itemize}
\item[] {\rm (1)} $\{E_{n}\}_ {n\geq 0}$
is an orthonormal basis for $LC({R},K).$
\item[] {\rm (2)}  $\{\cD_n\}_ {n\geq 0}$
is an orthonormal basis for  $LC({R},K).$
\item[] {\rm (3)}  $\{\S^{(n)}\}_ {n\geq 0}$
is an orthonormal basis for  $LC({R},K).$
\item[] {\rm (4)}  $\{\ph_n\}_ {n\geq 0}$
is an orthonormal basis for  $LC({R},K).$
\item[] {\rm (5)} $\{\psi_n\}_ {n\geq 0}$
is an orthonormal basis for  $LC({R},K).$
\end{itemize}
\end{theorem}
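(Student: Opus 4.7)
The plan is to establish the four equivalences $(4) \Leftrightarrow (j)$ for $j \in \{1, 2, 3, 5\}$; these form a star graph with centre $(4)$, so the full mutual equivalence of all five statements follows by transitivity. The central engine throughout is Lemma \ref{ba} (Serre's criterion): whenever $\{e_n\}$ is known to be an orthonormal basis of $LC(R,K)$ and $\{f_n\}$ is another sequence satisfying $\sup_n \|e_n - f_n\| < 1$, the family $\{f_n\}$ is automatically an orthonormal basis as well.

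For each of the four pairs I would invoke the corresponding change-of-basis identity proved just before Theorem \ref{eqlinear}: Lemma \ref{bif} handles $\{\cD_n\}$ against $\{\ph_n\}$, and the three lemmas that follow handle $\{\psi_n\}$ against $\{\ph_n\}$, $\{E_n\}$ against $\{\ph_n\}$, and (via Lemma \ref{pnsn}) $\{\S^{(n)}\}$ against $\{\ph_n\}$. In every case the identity has the form $f_n = \sum_m B_{n,m}\, e_m$ with $B_{n,n} = 1$ and $B_{n,m} \equiv 0 \pmod{T}$ whenever $m \neq n$. Assuming that $\{e_m\}$ is already an orthonormal basis, orthonormality forces $\|f_n - e_n\| = \sup_{m \neq n} |B_{n,m}| \leq |T| < 1$, so Serre's criterion promotes $\{f_n\}$ to an orthonormal basis. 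Since the preceding lemmas supply the relation in both directions for each pair, each individual equivalence $(4) \Leftrightarrow (j)$ goes through.

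The only substantive verification is lemma-by-lemma bookkeeping that the transition matrices really have unit diagonal and off-diagonal entries of strictly positive $T$-valuation, together with the routine check that the tail coefficients tend to $0$ in $|\cdot|_T$ so that the series actually lives in $LC(R,K)$. I expect the most delicate case to be the $\{\S^{(n)}\}$--$\{\ph_n\}$ relation in Lemma \ref{pnsn}(1), where the summation index is the non-standard set $\{m : m_- < n\}$. Here one must verify that $m = n$ belongs to this set (which holds because $n_- = n - q(n) < n$ whenever $n \geq 1$) and contributes the coefficient $T^0 = 1$, while every other contributing $m$ strictly exceeds $n$ and hence produces a factor $T^{m-n}$ with $v \geq 1$. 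Once these short valuation computations are checked, chasing the equivalences around the star graph centred at $(4)$ completes the proof of Theorem \ref{eqlinear}.
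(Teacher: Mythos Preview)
Your proposal is correct and takes essentially the same approach as the paper: the paper's own proof is the one-line remark ``All equivalences follow from Lemma \ref{ba} and Lemmas above,'' and your star-graph argument centred at $(4)$ is precisely a fleshed-out version of this, using Serre's criterion together with the transition lemmas (Lemma \ref{bif} through Lemma \ref{pnsn} and its successor) to push orthonormality from one family to another. The paper in fact records more pairwise relations than the four you need (e.g.\ it also gives $\cD_n\leftrightarrow\psi_m$, $E_n\leftrightarrow\psi_m$, $\S^{(n)}\leftrightarrow\psi_m$ directly), but your minimal spanning set suffices, and your careful reading of the index set $\{m:m_-<n\}$ in Lemma \ref{pnsn}(1) is exactly the verification required.
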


\begin{proof}
All equivalences follow from Lemma \ref{ba} and Lemmas above, such that  the detailed proofs are omitted here.
\end{proof}

\section{Orthonormal bases for $C(\O,\V).$}
Now, we redirect our attention to continuous functions from $\O$ to $\V$ where $\O$ is the integer ring of a local field $\V$ as in Section 2.1. We provide two sets of orthonormal bases of the entire space $C(\O,\V)$ of all continuous functions from $\O$ to $\V$ for two distinguished cases where $\O=R$ or $\O=\Z_p.$ These results essentially follow from Conrad's digit principle: \cite[Theorem 2]{Co2} for $\Fq[[T]]$ and \cite[Theorem 11]{Co2} for $\Zp.$

\subsection{Two $q$-adic digit Cartier bases of $C(R,K).$}
The following definition is crucial for the construction of an orthonormal basis for  $C(R,K)$ out of that of $LC(R,K).$
\begin{definition}
Let $\{f_{i}(x)\}_ {i\geq 0}$ be an orthonormal basis for $LC({R},K),$ and let
$$ n = {n}_{0}+{n}_{1}q + \cdots +{n}_{w-1}q^{w-1}$$
be the $q$-adic expansion of  any integer $n\geq 0,$ with $ 0\leq {n}_{i} < q.$ Set
\begin{equation*}
\CF_n(x) :=\prod_{i=0}^{w-1}f_{i}^{n_i}(x)~(n\geq 1),~\CF_0(x) =1,
\end{equation*}
and
\begin{equation*}\CF_{n}^{*}(x) :=\prod_{i=0}^{w-1}\CF_{n_iq^i}^{*}(x)~(n\geq 1),~\CF_{0}^{*}(x) =1,
\end{equation*}
where
\begin{equation*}
\CF_{n_iq^i}^{*}(x) = \left \{ \begin{array}{ll}
f_{i}^{q-1}(x)-1     & \mbox{if}~~ n_i=q-1
\\ f_{i}^{n_i}(x)   & \mbox{if}~~
 n_i<q-1.
 \end{array}
 \right.
\end{equation*}
Then, we say $\{\CF_{n}(x)\}_ {n\geq 0}$ is a $q$-adic extension of $\{f_{i}(x)\}_ {i\geq 0}$ in $LC({R},K).$
\end{definition}

Well-known examples of such $q$-adic extensions include
\begin{equation*}\label{threeb4}
(f_i,\CF_n,\CF_n^{*})= (E_i,G_n,G_n^{*}),(\cD_{i},{\sf D}_{n},{\sf D}_n^{*}), (\S^{(i)},\S_{n},\S_{n}^{*}),
\end{equation*}
where $E_i, \cD_{i}$, and $\S^{(i)}$ are referred to as Carlitz linear polynomials, Hasse derivatives, and shift operators, respectively, as in Section 2.1,
and $G_n, {\sf D}_{n},$, and $\S_{n}$ are referred to as Carlitz polynomials, digit derivatives, and digit shifts, respectively.
It is shown in \cite{Ca} that Carlitz polynomials $G_n$ are a prototypal $q$-adic extension of $E_i.$
Besides these examples, by Theorems \ref{pnrep} and \ref{car2} we add to this list two more $q$-adic extensions
\begin{equation}\label{twoCs}
(f_i,\CF_n,\CF_n^{*})= (\phi_i,\Phi_n,\Phi_n^{*}),(\psi_i,\Psi_n,\Psi_n^{*}),
\end{equation}
where $\Phi_n$ and $\Psi_n $ are referred to as digit Cartier functions.
For the remainder of this subsection, we only consider $(f_i,\CF_n,\CF_n^{*})$ in (\ref{twoCs}) to emphasize
both of these digit Cartier functions.
We now examine some properties of these two functions on $R$ as compared to constructions of other $q$-adic extensions.
All properties are modeled on properties such as the binomial and orthogonal properties of Carlitz
polynomials. Note that
\begin{equation}\label{threeb5}
f_{i}(T^j) = \left \{ \begin{array}{ll}
0 & \mbox{if}~~ j<i \\
1 & \mbox{if}~~ j=i   \\
\equiv 0 \pmod{T} & \mbox{if}~~ j>i.
\end{array} \right.
\end{equation}


\begin{proposition}\label{pG}
The binomial formulas for $\CF_n$  and $\CF_n^{*}$ are
\begin{itemize}
\item[] {\rm(1)} $\CF_{n}({\l}x)= {\l}^{n}\CF_{n}(x)$  for $\l \in {\F}_{q}^*.$
\item[]{\rm(2)} $\CF_{n}(x+y)= \sum_{i=0}^n \binom{n}{i}\CF_{i}(x)\CF_{n-i}(y).$
\item[] {\rm(3)} $\CF_{n}^{*}({\l}x) = {\l}^{n}\CF_{n}(x)$  for $\l \in {\F}_{q}^*.$
\item[] {\rm(4)} $\CF_{n}^{*}(x+y) = \sum_{i=0}^n \binom{n}{i}\CF_{i}(x)\CF_{n-i}^{*}(y).$
\end{itemize}
\end{proposition}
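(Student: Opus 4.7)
The plan is to reduce everything to the single-digit building blocks $f_i \in \{\phi_i, \psi_i\}$, exploiting the one property that makes these Cartier operators tractable here: both are $\Fq$-linear on $R$. Once the binomial identities are proven digit-by-digit, the product structure of $\CF_n$ and $\CF_n^{*}$ combined with Lucas's congruence (already invoked in the proof of Theorem \ref{power1}) will assemble them into the global formulas.

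For (1), I would use $f_i(\l x)=\l f_i(x)$ to get $f_i^{n_i}(\l x)=\l^{n_i} f_i^{n_i}(x)$, then multiply over the digits:
\[
\CF_n(\l x)=\prod_{i=0}^{w-1}\l^{n_i}f_i^{n_i}(x)=\l^{\sum_i n_i}\,\CF_n(x).
\]
Since $\l \in \Fq^*$ satisfies $\l^{q-1}=1$, and $n\equiv \sum_i n_i \pmod{q-1}$, the exponent collapses to $\l^n$. For (2), the $\Fq$-linearity and the freshman binomial give
\[
f_i^{n_i}(x+y)=(f_i(x)+f_i(y))^{n_i}=\sum_{j_i=0}^{n_i}\binom{n_i}{j_i}f_i^{j_i}(x)\,f_i^{n_i-j_i}(y),
\]
and multiplying these identities over $i$ produces a sum indexed by digit tuples $(j_0,\dots,j_{w-1})$; reindexing by $m=\sum_i j_iq^i$ and invoking Lucas gives $\prod_i\binom{n_i}{j_i}\equiv \binom{n}{m}$ in $\Fq$, while the inner product collapses to $\CF_m(x)\CF_{n-m}(y)$, yielding the claimed formula.

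For (3), the single-digit check splits. If $n_i<q-1$ the argument of (1) applies verbatim. If $n_i=q-1$ then $\CF^{*}_{n_iq^i}(\l x)=f_i^{q-1}(\l x)-1=\l^{q-1}f_i^{q-1}(x)-1=f_i^{q-1}(x)-1=\l^{n_i}\CF^{*}_{n_iq^i}(x)$, using $\l^{q-1}=1$. Multiplying over digits then gives the result (with the same $\l^n=\l^{\sum n_i}$ reduction as before). For (4), the key local identity I need to verify in both cases is
\[
\CF^{*}_{n_iq^i}(x+y)=\sum_{j=0}^{n_i}\binom{n_i}{j}\CF_{jq^i}(x)\,\CF^{*}_{(n_i-j)q^i}(y).
\]
When $n_i<q-1$ the differences $n_i-j$ are always $<q-1$, so $\CF^{*}$ coincides with $\CF$ and the identity is exactly the binomial expansion of $f_i^{n_i}(x+y)$. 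When $n_i=q-1$, the only index where $\CF^{*}$ differs from $\CF$ is $j=0$; the resulting $-1$ combines with $\CF_{0}(x)=1$ to reproduce the $-1$ in the definition of $\CF^{*}_{(q-1)q^i}(x+y)$. Then the same digit-tuple expansion and Lucas step used in (2) assembles these local identities into the global formula.

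The main obstacle will be (4), where the asymmetry between $\CF$ on the $x$-side and $\CF^{*}$ on the $y$-side has to be tracked carefully through the $n_i=q-1$ digit. Everything else is a routine dressing of $\Fq$-linearity, the characteristic-$p$ binomial theorem, and Lucas's congruence, so I would only write those computations out in detail once, for the slightly subtler starred identity.
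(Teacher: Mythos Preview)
Your proposal is correct and follows precisely the digit-by-digit argument of Carlitz \cite{Ca} and Goss \cite{Go} that the paper invokes: reduce to single digits via the $\Fq$-linearity of $f_i$, apply the ordinary binomial theorem there, and reassemble the product using Lucas's congruence. (Your computation for (3) actually yields $\l^{n}\CF_{n}^{*}(x)$ on the right, not the printed $\l^{n}\CF_{n}(x)$; this is evidently a typo in the statement, and your version is the intended one.)
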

\begin{proof}
The proof follows by adopting the arguments in \cite{Ca} or \cite{Go}  for $G_n$  and $G_n^{*}$ to our case.
\end{proof}

Because $\binom{q^{m} -1}{i} = (-1)^{i}$ in ${\F}_q,$ we obtain  the
following corollaries of Propositions \ref{pG}.
\begin{corollary}
\noindent {\rm(1)} $\CF_{q^{m}-1}(x+u)=
\sum_{i+j=q^{m}-1}(-1)^{i}\CF_{i}(x)\CF_{j}(u).$

\noindent {\rm(2)} $\CF_{q^{m}-1}(x-u)=
\sum_{i+j=q^{m}-1}\CF_{i}(x)\CF_{j}(u). $

\noindent {\rm(3)}
$\CF_{q^m-1}^{*}(x+u)=\sum_{i+j=q^{m}-1}(-1)^{i}\CF_{i}(x)\CF_{j}^{*}(u).$

\noindent {\rm(4)} $\CF_{q^{m}-1}^{*}(x-u)=
\sum_{i+j=q^{m}-1}\CF_{i}(x)\CF_{j}^{*}(u). $
\end{corollary}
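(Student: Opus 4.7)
The corollary is essentially a four-fold unpacking of Proposition \ref{pG}, so the proof plan is short. The key observation stated just above the corollary is the Lucas-type congruence $\binom{q^{m}-1}{i} \equiv (-1)^{i} \pmod{p}$, which holds because the base-$p$ digits of $q^{m}-1$ are all $p-1$. Everything else is a direct substitution.

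For parts (1) and (3), I would set $n = q^{m}-1$ in Proposition \ref{pG}(2) and Proposition \ref{pG}(4), respectively, and simplify $\binom{q^{m}-1}{i}$ to $(-1)^{i}$ in $\F_{q}$. After relabeling the summation index via $j = q^{m}-1-i$, both identities drop out immediately:
\[
\CF_{q^{m}-1}(x+u) = \sum_{i=0}^{q^{m}-1}(-1)^{i}\CF_{i}(x)\CF_{q^{m}-1-i}(u) = \sum_{i+j=q^{m}-1}(-1)^{i}\CF_{i}(x)\CF_{j}(u),
\]
and similarly with $\CF_{j}(u)$ replaced by $\CF_{j}^{*}(u)$ for part (3).

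For parts (2) and (4), I would deduce them from (1) and (3) by substituting $u \mapsto -u$ and invoking the scaling identities (1) and (3) of Proposition \ref{pG} to pull out a factor of $(-1)^{j}$. This produces $(-1)^{i+j} = (-1)^{q^{m}-1}$ as a global sign, which equals $1$ in $\F_{q}$: in characteristic $2$ this is trivial, and in odd characteristic $q^{m}$ is odd so $q^{m}-1$ is even. Hence the alternating signs collapse and the result is the plain unsigned convolution sum.

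There is no real obstacle; the only small point to flag is the compatibility of the scaling rule for $\CF_{n}^{*}$ under $\lambda = -1$, which is needed for part (4) to produce $\CF_{j}^{*}(u)$ on the right-hand side (not $\CF_{j}(u)$). Provided Proposition \ref{pG}(3) is applied so that $\CF_{j}^{*}(-u) = (-1)^{j}\CF_{j}^{*}(u)$, the derivation is immediate and the corollary is proved in a few lines with no further computation.
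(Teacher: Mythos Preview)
Your proposal is correct and matches the paper's approach exactly: the paper simply states that the corollary follows from Proposition~\ref{pG} together with $\binom{q^{m}-1}{i}=(-1)^{i}$ in $\F_q$, and your write-up just unpacks that remark. The one subtlety you flag---that Proposition~\ref{pG}(3) must be read as $\CF_{j}^{*}(-u)=(-1)^{j}\CF_{j}^{*}(u)$ for part~(4) to go through---is indeed the only point requiring care, and it holds (the stated form of Proposition~\ref{pG}(3) appears to have a typographical slip on the right-hand side).
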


The following is the orthogonality property of the two digit Cartier functions.
\begin{proposition}\label{dorg}
\par
{\rm(1)} For $l <q^n, k$ an arbitrary integer $\geq0,$
\[ \sum_{\scriptsize{\begin{array}{l}
                 \a \in {\F}_{q}[T] \nonumber \\
                 \mbox{deg}(\a)< n \nonumber
                   \end{array}}}{{\CF}_{k}(\a){\CF}_{l}^{*}(\a)}   =
                   \left \{ \begin{array}{ll}
0       & if k+l \not =q^n-1 \\
 (-1)^n   & if k+l =q^n-1.
 \end{array}
 \right. \]

\par
{\rm(2)} For $l <q^n, k < q^n,$
\[ \sum_{\scriptsize{\begin{array}{l}
                 \a \hspace{0.1in} monic \nonumber \\
                 \mbox{deg}(\a) = n \nonumber
                   \end{array}}}{{\CF}_{k}(\a){\CF}_{l}^{*}(\a)}
                   = \left \{ \begin{array}{ll}
                   0       & if k+l \not =q^n-1 \\
 (-1)^n   & if k+l =q^n-1.
 \end{array}
 \right. \]
\end{proposition}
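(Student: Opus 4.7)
My plan is to reduce both parts of the proposition to a delta-function identity for $\CF_{q^n-1}^*$ on $\F_q[T]_{<n}$. First I will show that $\CF_{q^n-1}^*(\alpha) = (-1)^n \delta_{\alpha, 0}$ for any $\alpha = \sum_{i<n} a_i T^i$ of degree less than $n$. Indeed, if $\alpha \neq 0$ and $i^* = \max\{i : a_i \neq 0\}$, then since $a_j = 0$ for $j > i^*$, the expansion $f_{i^*}(\alpha) = a_{i^*} + \sum_{j > i^*} a_j f_{i^*}(T^j)$ collapses to the scalar $a_{i^*} \in \F_q^*$, so the factor $f_{i^*}(\alpha)^{q-1} - 1$ in $\CF_{q^n-1}^*(\alpha) = \prod_{i<n}(f_i(\alpha)^{q-1}-1)$ vanishes. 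When $\alpha = 0$ every factor equals $-1$, giving $(-1)^n$. Since $\F_q[T]_{<n}$ is closed under subtraction, this promotes to $\CF_{q^n-1}^*(\alpha - u) = (-1)^n \delta_{\alpha, u}$ for $\alpha, u \in \F_q[T]_{<n}$; and since two monic polynomials of degree $n$ differ by an element of $\F_q[T]_{<n}$, the same identity holds when $\alpha, u$ range over the monic degree-$n$ set.

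Next I will combine this with the identity $\CF_{q^n-1}^*(\alpha - u) = \sum_{i+j = q^n-1}\CF_i(\alpha)\CF_j^*(u)$ from the corollary of Proposition~\ref{pG}. Multiplying by $\CF_l^*(\alpha)$ and summing over $\alpha$ in the relevant set $V$ (either $\F_q[T]_{<n}$ for part~(1) or the monic degree-$n$ polynomials for part~(2)) then yields
\[
(-1)^n \CF_l^*(u) = \sum_{j < q^n} S_n(q^n - 1 - j,\, l)\, \CF_j^*(u), \qquad u \in V,
\]
where $S_n(k, l)$ denotes the sum in the proposition. Matching coefficients against the trivial expansion of $(-1)^n \CF_l^*(u)$ gives $S_n(k, l) = (-1)^n \delta_{k+l,\, q^n-1}$ for $k < q^n$. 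For the $k \geq q^n$ case in part~(1), the vanishing $f_s(\alpha) = 0$ (when $s \geq n$ and $\deg \alpha < n$) forces $\CF_k(\alpha) = 0$, so the sum is zero, consistent with $k + l > q^n - 1$.

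The main obstacle will be justifying the coefficient matching, which requires the $q^n$ restricted functions $\{\CF_j^*|_V\}_{j<q^n}$ to be $K$-linearly independent. I plan to establish this by a mod-$T$ argument: for $u \in V$, each $f_i(u) \equiv u_i \pmod{T}$ (in the monic case using $f_i(T^n) \equiv 0 \pmod{T}$ for $i < n$, which holds for both $\phi_i$ and $\psi_i$), so $\CF_j^*(u)$ reduces to the tensor product $\prod_i \bar h_i(u_i)$, where $\bar h_i(x) = x^{j_i}$ if $j_i < q-1$ and $\bar h_i(x) = x^{q-1}-1$ if $j_i = q-1$. Since $\{1, x, \dots, x^{q-2}, x^{q-1}-1\}$ is an $\F_q$-basis of $\mathrm{Map}(\F_q, \F_q)$, the $q^n$ tensor products form a basis of $\mathrm{Map}(\F_q^n, \F_q)$; after clearing denominators in any putative $K$-linear dependence so that the coefficients lie in $R$ with at least one unit, reducing modulo $T$ forces all coefficients into $TR$, a contradiction that upgrades $\F_q$-independence to $K$-independence.
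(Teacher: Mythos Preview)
Your argument is correct and matches the strategy the paper points to: the paper does not spell out a proof but refers to Carlitz's interpolation argument and Yang's elementary argument, both of which hinge on exactly the two ingredients you isolate --- the delta-function identity $\CF_{q^n-1}^{*}(\alpha)=(-1)^n\delta_{\alpha,0}$ on $A_n$ (derived from property~(\ref{threeb5})) together with the addition formula in the corollary to Proposition~\ref{pG}. Your mod-$T$ reduction to the tensor basis $\{1,x,\dots,x^{q-2},x^{q-1}-1\}^{\otimes n}$ of $\mathrm{Map}(\F_q^{\,n},\F_q)$ is precisely the mechanism behind the digit principle that the paper invokes for Yang's approach, so the coefficient-matching step is justified in the same way.
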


\begin{proof}
Two proofs are known for the case $(\CF_n, \CF_n^{*})=(G_n, G_n^{*}).$  Indeed, Carlitz \cite{Ca} provided the original proof, which is based on interpolations of his polynomials. Yang \cite{Ya} established the same result in a direct yet elementary way. These two arguments are also applied to the two digit Cartier functions, as in the case of digit derivatives  and digit shifts \cite{J1,J3}. In particular, Yang's argument operates with the digit principle once we have a basis $\{f_i\}_{i\geq0}$ for $LC(R,K),$ having additional property in (\ref{threeb5}).
\end{proof}

The digit principle leads to the following main result.
\begin{theorem}\label{sjrep}
\quad
Let $(\CF_n,\CF_n^{*})= (\Phi_n,\Phi_n^{*})$ or $(\Psi_n,\Psi_n^{*}).$
\begin{itemize}
\item[] {\rm (1)} $\{ \CF_n(x)\}_{n\geq 0}$ is an orthonormal basis for $C(R,K).$
\item[] {\rm (2)} Write $f \in C(R,K)$ as $ f(x)=\sum_{n\geq0}c_n \CF_n(x).$ Then, for any integer $w$ such that $q^w >n$, $c_{n}$ can be recovered by
 $$c_{n}
=(-1)^{w}\sum_{\a \in A_{w}}\CF_{q^w-1-n}^{*}(\a)f(\a),$$
where $ A_{w}$ denotes the set of all polynomials in $T$ with coefficients in $\Fq$ of degree $<w.$
\end{itemize}
\end{theorem}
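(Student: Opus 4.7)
The plan is to deduce both parts from machinery that is already in place. For part (1), the combination of Theorems \ref{pnrep} and \ref{car2} states that $\{\phi_n\}_{n\geq 0}$ and $\{\psi_n\}_{n\geq 0}$ are orthonormal bases of the closed subspace $LC(R,K)\subset C(R,K)$. Since $(\Phi_n,\Phi_n^*)$ and $(\Psi_n,\Psi_n^*)$ are, by construction in (\ref{twoCs}), the $q$-adic digit extensions of $(\phi_i,\psi_i)$, I would simply invoke Conrad's digit principle \cite[Theorem 2]{Co2}: a $q$-adic extension of any orthonormal basis of $LC(R,K)$ is automatically an orthonormal basis of the entire space $C(R,K)$. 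Thus both $\{\Phi_n\}_{n\geq 0}$ and $\{\Psi_n\}_{n\geq 0}$ qualify.

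For part (2), I would extract the coefficient formula by the standard orthogonality argument, powered by Proposition \ref{dorg}(1). Fix $n$ and choose any integer $w$ with $q^w>n$, so that $l:=q^w-1-n$ satisfies $0\leq l<q^w$. Writing $f(x)=\sum_{k\geq 0}c_k\CF_k(x)$ with $c_k\to 0$, evaluate at $\alpha\in A_w$, multiply by $\CF^*_{q^w-1-n}(\alpha)$, and sum over the finite set $A_w$. Because $A_w$ is finite, the finite outer sum commutes with the convergent infinite series, giving
\[
\sum_{\alpha\in A_w}\CF^*_{q^w-1-n}(\alpha)f(\alpha)=\sum_{k\geq 0}c_k\sum_{\alpha\in A_w}\CF_k(\alpha)\CF^*_{q^w-1-n}(\alpha).
\]
Proposition \ref{dorg}(1), applied with $l=q^w-1-n<q^w$ and $k\geq 0$ arbitrary, makes the inner sum vanish except when $k+(q^w-1-n)=q^w-1$, i.e.\ $k=n$, in which case it equals $(-1)^w$. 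Only one term survives, yielding $(-1)^w c_n$, and solving for $c_n$ delivers the claimed formula.

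The only subtlety I anticipate is making sure that the orthogonality statement of Proposition \ref{dorg}(1) truly covers \emph{all} $k\geq 0$ (including $k\geq q^w$), so that the infinite tail of the series is annihilated term-by-term; this is exactly how part (1) of that proposition is stated, so there is nothing further to check. The justification for interchanging the finite sum over $A_w$ with the infinite series is automatic in the non-Archimedean setting, since the series converges in $K$ for each fixed $\alpha$. Beyond these formal points, both parts are immediate consequences of results already in hand, so I would keep the proof very short and simply cite Conrad's digit principle for (1) and Proposition \ref{dorg}(1) for (2).
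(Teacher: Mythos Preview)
Your proposal is correct and matches the paper's proof essentially line for line: part (1) is obtained by applying Conrad's digit principle to the orthonormal bases $\{\phi_n\}$ and $\{\psi_n\}$ of $LC(R,K)$ from Theorems \ref{pnrep} and \ref{car2}, and part (2) is the same orthogonality computation using Proposition \ref{dorg}(1). The paper's write-up is terser but the argument is identical, including the interchange of the finite sum over $A_w$ with the series in $k$.
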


\begin{proof} Part (1) follows from applying the digit principle to two bases, $\phi_n$ and $\psi_n$
 for $LC(R,K)$ in Theorems \ref{pnrep} and \ref{car2}.
For part (2), it follows from the orthogonality property in Proposition \ref{dorg}.
For any integer $w$ such that $q^w >n$, we have $$
(-1)^w \sum_{\a \in A_w}{\CF}_{q^w-1-n}^{*}(\a)f(\a)
=\sum_{j=0}^{\infty}c_{j}(-1)^{w}\sum_{\a \in A_w}{\CF}_{q^w-1-n}^{*}(\a){\CF}_j(\a)=c_n.$$
\end{proof}


Application of the digit principle to Theorem \ref{eqlinear} produces the following result.

\begin{theorem} The following are equivalent:
\begin{itemize}
\item[] {\rm (1)} $\{G_{n}\}_{n\geq 0}$ is an orthonormal basis for $C({R},K).$
\item[] {\rm (2)} $\{{\sf D}_{n}\}_{n\geq 0}$ is an orthonormal basis  for   $C({R},K).$
\item[] {\rm (3)} $\{\S_{n}\}_{n\geq 0}$ is an orthonormal basis  for   $C({R},K).$
\item[] {\rm (4)} $\{\Phi_{n}\}_{n\geq 0}$ is an orthonormal basis  for   $C({R},K).$
\item[] {\rm (5)} $\{\Psi_{n}\}_{n\geq 0}$ is an orthonormal basis  for   $C({R},K).$
\end{itemize}
\end{theorem}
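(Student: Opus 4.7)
The plan is to mirror the proof of Theorem \ref{eqlinear} by lifting it to the whole space $C(R,K)$ via Conrad's digit principle \cite[Theorem 2]{Co2}. The organizing observation is that each of the five families in the present statement is, by construction, the $q$-adic digit extension of the corresponding linear family appearing in Theorem \ref{eqlinear}: $G_n$ extends $E_i$, ${\sf D}_n$ extends $\cD_i$, $\S_n$ extends $\S^{(i)}$, $\Phi_n$ extends $\phi_i$, and $\Psi_n$ extends $\psi_i$. Thus the present theorem should reduce to its linear-version counterpart upon invoking the digit principle uniformly in all five cases.

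First I would check that the digit-principle hypotheses are met for each of the five underlying linear bases. This amounts to confirming that every $f_i \in \{E_i,\cD_i,\S^{(i)},\phi_i,\psi_i\}$ maps $R$ into $R$ and obeys the normalization recorded in (\ref{threeb5}). For $\phi_i$ and $\psi_i$ this is immediate from Definition \ref{ConR} together with (\ref{pntm2}); for $E_i$, $\cD_i$, and $\S^{(i)}$ these properties are classical and used in the construction of $G_n$, ${\sf D}_n$, and $\S_n$ in Conrad \cite{Co2} and in \cite{J1,J3,J6}. Next, I would apply Theorem \ref{eqlinear}, which asserts the equivalence of the five linear statements, together with the digit principle transferring each to its $q$-adic extension; transitivity of equivalence then yields the five-way equivalence claimed.

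The main obstacle is not technical but concerns the direction of the digit principle. Conrad's result is usually stated as the forward implication (orthonormality of $\{f_i\}$ in $LC(R,K)$ yields orthonormality of $\{\CF_n\}$ in $C(R,K)$), so to deduce, say, (1) from (2) one must be able to step back down to the linear level. This is supplied by the coefficient-recovery identity of Theorem \ref{sjrep}(2), which is bijective between $\{c_n\}$ and $f$ and which specializes on the subfamily $\{\CF_{q^i}\}=\{f_i\}$ to the analogous linear-coefficient formula; hence orthonormality of $\{\CF_n\}$ in $C(R,K)$ forces orthonormality of $\{f_i\}$ in $LC(R,K)$. With both directions of the digit principle available, Theorem \ref{eqlinear} delivers the five equivalences, and no new change-of-basis lemmas are required beyond those already assembled in Section 2.3.
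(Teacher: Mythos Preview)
Your proposal is correct and follows essentially the same route as the paper: reduce to Theorem \ref{eqlinear} via Conrad's digit principle, using that each of $G_n,{\sf D}_n,\S_n,\Phi_n,\Psi_n$ is the $q$-adic extension of the corresponding linear family $E_i,\cD_i,\S^{(i)},\phi_i,\psi_i$.

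One remark on your discussion of the ``backward'' direction of the digit principle. Your concern is legitimate in the abstract, but it is not needed here. Each of the five linear statements in Theorem \ref{eqlinear} is already established unconditionally in Section 2 (Theorems \ref{Hre}, \ref{Wre}, \ref{snrep}, \ref{pnrep}, \ref{car2}); Theorem \ref{eqlinear} packages these as a chain of equivalences via Lemma \ref{ba}, but the upshot is simply that all five hold. Applying the forward direction of the digit principle to each then shows that all of (1)--(5) hold, and five true statements are trivially equivalent. So you can drop the appeal to Theorem \ref{sjrep}(2) for a converse; that argument, as you wrote it, would in any case require Corollary \ref{ch} (vanishing of the non-$q$-power coefficients for linear $f$), which sits logically after the present theorem and whose direct proof via homogeneity and additivity you did not spell out.
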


\begin{proof}
The proof follows from the application of the digit principle to Theorem \ref{eqlinear}.
For the individual proofs of the first three statements we refer the reader to the following work:
see \cite{W1,Go, Co2, Ya} for Part (1) and \cite{Sn,J1, J3,J4, Co1} for Part (2)
and \cite{J6} for Part (3).
\end{proof}

For $f \in C(R,K)$ to lie in $LC(R,K),$ we provide the conditions in terms of its coefficients.
\begin{corollary}\label{ch}
Write $f \in C(R,K)$ as $f(x) = \sum_{n=0}^{\infty} c_{n}
\CF_{n}(x).$  Then $f \in LC(R,K)$ if and only if $c_{n} =0$ for
$n \not =q^{i},$ where $i\geq0.$
\end{corollary}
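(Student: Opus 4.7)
The approach is to exploit the defining feature of the $q$-adic extension together with the orthonormality of $\{\CF_n\}_{n\geq 0}$ in $C(R,K)$ and of $\{f_i\}_{i\geq 0}$ in $LC(R,K)$. The key observation, which I would make first, is that when $n=q^i$ the $q$-adic expansion has a single digit equal to $1$, so directly from the definition
\[
\CF_{q^i}(x)=\prod_{j}f_j^{n_j}(x)=f_i(x).
\]
Thus the powers of $q$ pick out exactly the $\Fq$-linear factors $f_i$ among the products $\CF_n$.

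For the \emph{if} direction, assume $c_n=0$ for every $n$ that is not a power of $q$. Then
\[
f(x)=\sum_{i\geq 0}c_{q^i}\CF_{q^i}(x)=\sum_{i\geq 0}c_{q^i}f_i(x),
\]
with $c_{q^i}\to 0$ as $i\to\infty$ (since $c_n\to 0$ in $C(R,K)$). Because $LC(R,K)$ is a closed $K$-subspace of $C(R,K)$ (the $\Fq$-linearity conditions are uniform-limit-stable) and each $f_i$ lies in $LC(R,K)$, the uniform sum $f$ belongs to $LC(R,K)$.

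For the \emph{only if} direction, suppose $f\in LC(R,K)$. By Theorem \ref{pnrep} or Theorem \ref{car2}, the sequence $\{f_i\}_{i\geq 0}$ is an orthonormal basis for $LC(R,K)$, so there exist $a_i\in K$ with $a_i\to 0$ and
\[
f(x)=\sum_{i\geq 0}a_i f_i(x)=\sum_{i\geq 0}a_i \CF_{q^i}(x).
\]
The right-hand side is a convergent expansion of $f$ in terms of $\{\CF_n\}_{n\geq 0}$ (with $0$ coefficients outside the set $\{q^i:i\geq 0\}$). Theorem \ref{sjrep}(1) guarantees that the expansion of $f$ in the orthonormal basis $\{\CF_n\}_{n\geq 0}$ of $C(R,K)$ is unique, so we must have $c_{q^i}=a_i$ and $c_n=0$ for every $n$ not of the form $q^i$.

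There is no real obstacle here; the entire content is the compatibility between the two orthonormal-basis theorems. The only point requiring care is the uniqueness step in the necessity direction: one must invoke the uniqueness clause of the orthonormal-basis definition applied to $C(R,K)$ (not merely to $LC(R,K)$) to conclude that the coefficients outside the indices $q^i$ genuinely vanish rather than just being unrecoverable.
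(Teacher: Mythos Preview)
Your proof is correct and follows essentially the same approach the paper has in mind: the paper simply says the corollary follows from Theorem~\ref{eqlinear} (i.e., that $\{f_i\}$ is an orthonormal basis of $LC(R,K)$) together with the digit-principle identification $\CF_{q^i}=f_i$, and then invokes uniqueness of the expansion in $C(R,K)$ from Theorem~\ref{sjrep}(1). Your write-up makes explicit precisely these ingredients, including the uniqueness step that the paper leaves implicit.
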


\begin{proof}
This follows from Theorem \ref{eqlinear}. However, an alternative proof follows by adopting the arguments \cite{W1} or \cite{J3})
to our case.
\end{proof}

From Corollary \ref{ch} and Theorem \ref{sjrep}  we can indirectly retrieve the coefficients of $f(x) = \sum_{n=0}^{\infty} B_{n}\psi_{n}(x)$ by computing, for any $w$ such that $q^n <q^w,$
\begin{eqnarray}\label{psifor}
B_{n}=(-1)^{w}\sum_{\a \in A_{w}}\Psi_{q^w-1-q^n}^{*}(\a)f(\a).
\end{eqnarray}
The formula for the coefficients $c_n$ in Theorem \ref{sjrep} yields the following corollary.
\begin{corollary}
Let $f(x) =\sum_{n\geq0}c_n{\CF}_{n}(x)$ be a continuous
function from $R$ to $K.$ Then $f(x) \in C(R,R)$ if and only if
$\{c_n\}_{n\geq0} \subset R.$
\end{corollary}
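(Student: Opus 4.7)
The plan is to prove both directions directly from the definitions together with the coefficient formula in Theorem \ref{sjrep}. The starting observation is that the Cartier operators $\phi_n$ and $\psi_n$ map $R$ into $R$: for $\psi_n$ this is immediate from the formula $\psi_n(\sum x_i T^i) = \sum x_{iq^k+n}T^i$ (the output has coefficients in $\Fq$), and for $\phi_n = \psi_n^{q^k}$ it follows a fortiori. Consequently, any product $\CF_n(x)$ and any $\CF_n^*(x)$ (each being a finite product of operators $f_i(x)$, or of $f_i(x)^{q-1}-1$, applied to $x$) lies in $R$ whenever $x\in R$, since $R$ is a ring.

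For the ``if'' direction, assume $\{c_n\}_{n\geq 0}\subset R$. Fix $x\in R$. Then each summand $c_n\CF_n(x)$ lies in $R$, and since the series $\sum_{n\geq 0}c_n\CF_n(x)$ converges in $K$ and $R$ is closed in $K$ (it is the closed unit ball with respect to $|\cdot|_T$), the limit $f(x)$ lies in $R$. This holds for every $x\in R$, so $f\in C(R,R)$.

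For the ``only if'' direction, assume $f\in C(R,R)$. Fix $n\geq 0$ and choose any integer $w$ with $q^w>n$. By Theorem \ref{sjrep}(2),
\[
c_n = (-1)^{w}\sum_{\alpha\in A_w}\CF_{q^w-1-n}^{*}(\alpha)\,f(\alpha).
\]
Every $\alpha\in A_w$ is a polynomial in $T$ with coefficients in $\Fq$, hence $\alpha\in R$. By the observation above, $\CF_{q^w-1-n}^{*}(\alpha)\in R$, and by hypothesis $f(\alpha)\in R$. Since the right-hand side is a finite sum of products of elements of $R$, we conclude $c_n\in R$.

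The proof is essentially bookkeeping; the only step that requires comment is the stability of $R$ under the digit Cartier functions $\Phi_n,\Phi_n^*,\Psi_n,\Psi_n^*$, and this is immediate from the explicit formulas for $\phi_n$ and $\psi_n$ in Definition \ref{ConR}. I do not anticipate a real obstacle, since the hard analytic content (the expansion itself and the recovery formula for $c_n$) is already supplied by Theorem \ref{sjrep}.
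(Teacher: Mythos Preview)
Your proof is correct and follows essentially the same approach as the paper, which likewise derives the corollary from the coefficient formula in Theorem \ref{sjrep}(2) together with the fact that the digit Cartier functions $\CF_n$ and $\CF_n^*$ send $R$ into $R$. One could shorten both directions at once by invoking the sup-norm identity $\|f\|=\max_n|c_n|$ from the orthonormal basis property, but your explicit argument is equally valid and matches the paper's indicated route.
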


\subsection{Two $p$-adic digit Cartier bases of $C(\Z_p, \Q_p)$}

Here, we introduce analogues in $\Zp$ of two Cartier operators \
and then show that  $p$-adic extensions of these analogues form an
orthonormal basis of $C(\Z_p, \Q_p)$.
Now, the Cartier maps on $\Z_p$ can be defined in the same way as was with $R$ in Definition \ref{ConR}, with
the same notation to denote those maps.
\begin{definition}\label{ConZ}
Let $n$ be an integer such that $p^{k-1} \leq n <p^k,$ or $(n,k)=(0,0).$

(1) The Cartier map $\phi_n$ on $\Z_p$ is defined by
$$ \phi_n(\sum_{i\geq0}x_ip^i)=\sum_{i\geq0}x_{ip^k+n}p^{ip^k}.$$

(2)  The Cartier map $\psi_n$ on $\Z_p$ is defined by
$$ \psi_n(\sum_{i\geq0}x_ip^i)=\sum_{i\geq0}x_{ip^k+n}p^{i}.$$

\end{definition}

\begin{lemma}
For each $n\geq 0,$ $\phi_n$ is a continuous function on $\Z_p$ and so is $\psi_n.$
\end{lemma}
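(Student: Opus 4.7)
The plan is to mimic the continuity argument of Lemma \ref{continu}, but with one crucial caveat: unlike their counterparts on $\Fq[[T]]$, the operators $\phi_n$ and $\psi_n$ on $\Zp$ are \emph{not} $\Zp$-linear (in fact, not even additive), because $p$-adic digit extraction fails to commute with addition whenever carries occur. Consequently, we cannot reduce continuity to continuity at $x=0$ as was done in Lemma \ref{continu}, and must instead compare two arbitrary points directly on the level of their $p$-adic digits.

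The main step is the following observation. Fix $n$ with $p^{k-1} \le n < p^k$, and let $x = \sum_{i \ge 0} x_i p^i$ and $y = \sum_{i \ge 0} y_i p^i$ be two elements of $\Zp$ with $v_p(x-y) \ge N$. Since $x$ and $y$ have the same canonical $p$-adic expansion up to level $N$, we have $x_i = y_i$ for all $0 \le i < N$. Assuming $N > n$ and setting $i_0 := \lceil (N-n)/p^k \rceil$, one finds that $x_{ip^k+n} = y_{ip^k+n}$ for every $i < i_0$, so that
\begin{equation*}
\phi_n(x) - \phi_n(y) \;=\; \sum_{i \ge i_0}\bigl(x_{ip^k+n} - y_{ip^k+n}\bigr)\,p^{ip^k},
\end{equation*}
which has $p$-adic valuation at least $i_0 p^k \ge N - n$. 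The analogous computation for $\psi_n$ yields
\begin{equation*}
\psi_n(x) - \psi_n(y) \;=\; \sum_{i \ge i_0}\bigl(x_{ip^k+n} - y_{ip^k+n}\bigr)\,p^{i},
\end{equation*}
with valuation at least $i_0 \ge (N-n)/p^k$.

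As $N \to \infty$, both lower bounds tend to infinity, which is exactly the $(\varepsilon, \delta)$-criterion for continuity on the non-Archimedean space $\Zp$. The case $(n,k) = (0,0)$ (so $\phi_0 = \psi_0 = \mathrm{id}$) is trivial, so this completes the proof for all $n \ge 0$. The only mild subtlety is the bookkeeping of the index set $\{\, i : ip^k + n < N \,\}$, which must be handled just carefully enough to make explicit that the digits of $x$ and $y$ being sampled by $\phi_n$ (or $\psi_n$) all lie in the range where the hypothesis $x \equiv y \pmod{p^N}$ forces agreement; this is the point where the absence of additivity would otherwise be a concern, but since we never add or subtract the two series before extracting digits, the argument goes through cleanly.
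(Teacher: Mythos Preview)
Your proof is correct and follows essentially the same approach as the paper: both arguments show that if $x \equiv y \pmod{p^N}$ then $\phi_n(x) \equiv \phi_n(y) \pmod{p^{N-n}}$ and $\psi_n(x) \equiv \psi_n(y) \pmod{p^{\lfloor (N-n)/p^k \rfloor}}$, by noting that the first $N$ canonical $p$-adic digits of $x$ and $y$ agree. Your version is in fact more explicit than the paper's (which leaves the verification to the reader), and your editorial remark about the failure of additivity on $\Zp$ is a worthwhile clarification of why continuity at $0$ alone would not suffice here.
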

\begin{proof}
It suffices to show that for each integer $n\geq0,$ all $x \in \Z_p,$ and $m\geq n,$
\begin{eqnarray*}
\phi_n(x+p^mz)) & \equiv & \phi_n(x) \pmod {p^{m-n}};\\
\psi_n(x+p^mz)) & \equiv & \psi_n(x) \pmod {p^{[(m-n)/p^k]}}.
\end{eqnarray*}
We leave the proof of these inequalities to the reader because it  follows from Definition \ref{ConZ}.
\end{proof}

The following result gives the Mahler expansion of $\phi_n.$

\begin{lemma}\label{skbinompre}
Let $\phi_n =\sum_{j=0}^{\infty}a_{j}^{(n)}\binom{x}{j}$ be the Mahler expansion of $\phi_n.$  Then the coefficients $a_{j}^{(n)}$ possess the following properties:

(1) $a_{j}^{(n)}=0$ for $0 \leq j <p^n;$

(2) $a_{j}^{(n)}=1$ for $ j =p^n;$

(3) If $j > p^n $, then $p$ divides  $a_{j}^{(n)}.$
\end{lemma}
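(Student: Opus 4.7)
The plan is to work directly at non-negative integers $i$ and then apply the Mahler inversion formula
\begin{equation*}
a_j^{(n)} = \sum_{i=0}^{j}(-1)^{j-i}\binom{j}{i}\phi_n(i).
\end{equation*}
Writing a non-negative integer $i$ in $p$-adic form $i = \sum_{s\geq 0} i_s p^s$, Definition \ref{ConZ} gives $\phi_n(i) = \sum_{s\geq 0} i_{sp^k+n}\,p^{sp^k}$. I would extract three elementary consequences from this formula: (a) if $i < p^n$ then $i_s = 0$ for every $s \geq n$, and in particular $i_{sp^k+n} = 0$ for all $s \geq 0$, whence $\phi_n(i) = 0$; (b) for $i = p^n$ the only nonzero digit is $i_n = 1$, so $\phi_n(p^n) = 1$; (c) since $p^k \geq 1$, reduction mod $p$ kills the terms with $s \geq 1$, so $\phi_n(i) \equiv i_n \pmod{p}$.

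Parts (1) and (2) then fall out immediately. For (1), if $j < p^n$ then every summand in the inversion formula vanishes by (a), so $a_j^{(n)} = 0$. For (2), when $j = p^n$ only the top term $i = p^n$ survives, and it contributes $\binom{p^n}{p^n}\phi_n(p^n) = 1$ by (b).

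For part (3), the key move is to pair (c) with Lucas's congruence $\binom{i}{p^n} \equiv i_n \pmod{p}$ (valid since the $p$-adic expansion of $p^n$ has a single $1$ in position $n$) to obtain $\phi_n(i) \equiv \binom{i}{p^n} \pmod{p}$ for every non-negative integer $i$. Substituting into the inversion formula,
\begin{equation*}
a_j^{(n)} \equiv \sum_{i=0}^{j}(-1)^{j-i}\binom{j}{i}\binom{i}{p^n} \pmod{p},
\end{equation*}
and the right-hand side is precisely the $j$-th Mahler coefficient of the polynomial $\binom{x}{p^n}$, which equals $\delta_{j,p^n}$. Hence $a_j^{(n)} \equiv 0 \pmod{p}$ whenever $j \neq p^n$, establishing (3) for $j > p^n$ (and offering a mod-$p$ consistency check with (1) and (2)).

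I do not anticipate a serious obstacle. The one conceptual step is recognizing the mod-$p$ identification $\phi_n \equiv \binom{x}{p^n}$ through Lucas, which collapses (3) into a one-line Mahler inversion; everything else is direct bookkeeping from the digit formula in Definition \ref{ConZ}.
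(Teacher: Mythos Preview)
Your proposal is correct. Parts (1) and (2) proceed exactly as in the paper's proof, via the Mahler inversion formula together with the observations $\phi_n(i)=0$ for $i<p^n$ and $\phi_n(p^n)=1$.

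For part (3), both you and the paper start from the same congruence $\phi_n(i)\equiv i_n\pmod p$, but the completions diverge. The paper applies Lucas's theorem to $\binom{j}{i}$, factoring the inversion sum as a product over $p$-adic digit positions and then handling two cases (some $j_l\neq 0$ with $l\neq n$, versus $j=j_np^n$ with $j_n>1$) by separate binomial-theorem identities. You instead apply Lucas to $\binom{i}{p^n}$ to recognize $i_n\equiv\binom{i}{p^n}\pmod p$, so that modulo $p$ the inversion sum becomes the $j$-th Mahler coefficient of the polynomial $\binom{x}{p^n}$, which is $\delta_{j,p^n}$ by the standard identity $\sum_{i}(-1)^{j-i}\binom{j}{i}\binom{i}{m}=\binom{j}{m}(1-1)^{j-m}$. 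Your route is shorter and more conceptual---it identifies $\phi_n\equiv\binom{x}{p^n}\pmod p$ as functions on $\Z_{\geq 0}$ and reads off the conclusion---while the paper's explicit digit computation has the minor virtue of making the vanishing mechanism visible at each digit without invoking a second binomial identity.
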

\begin{proof}

We use Mahler's result to write $\phi_n =\sum_{j=0}^{\infty}a_{j}^{(n)}\binom{x}{j}.$ Then, from the well-known formula for
coefficients, we have
$$a_{j}^{(n)}=\sum_{i=0}^{j}(-1)^{j-i}\binom{j}{i}\phi_n(i).$$
We observe from the definition of $\phi_n$ that $\phi_n(i)=0$ for $0 \leq i <p^n$ and $\phi_n(p^n)=1.$
Then Parts (1) and (2) follow from these observations.
Furthermore, they also give
$$a_{j}^{(n)}=\sum_{i=p^n}^{j}(-1)^{j-i}\binom{j}{i}\phi_n(i).$$
For part (3), we use Lucas's congruence to show that $a_{j}^{(n)}\equiv 0 \pmod{p}$ for $j > p^n .$
Write $j$ and $i \geq p^n$ in $p$-adic form as
$$j=j_0 +j_1p+ \cdots +j_np^n + \cdots +j_sp^s;$$
$$i=i_0 +i_1p+ \cdots +i_np^n + \cdots +i_sp^s.$$
Note that $\phi_n(i) \equiv  i_n \pmod{p}$ if $i\geq p^n$ is of such $p$-adic form  with $i_n \not =0.$
Application of Lucas's congruence to $a_{j}^{(n)}$ gives
\begin{eqnarray}\label{lsum}
a_{j}^{(n)} \equiv \sum_{\substack{0\leq i_l \leq j_l, \forall l \not =n,\\ 1 \leq i_n \leq j_n}}(-1)^{j_0-i_0} \cdots (-1)^{j_s-i_s} \binom{j_0}{i_0} \cdots \binom{j_s}{i_s}i_n \pmod{p}.
\end{eqnarray}
If $j_l \not =0$ for some $l \not =n,$ then it is easy to see that
the sum in (\ref{lsum}) vanishes from the identity $$\sum_{0 \leq i_l \leq j_l} (-1)^{j_l-i_l}\binom{j_l}{i_l}=(1-1)^{j_l}.$$
If $j_l =0$ for all $l$ with $l\not= n,$ then $j=j_np^n >p^n$ with $j_n>1.$
Hence, the sum in (\ref{lsum}) vanishes as $$\sum_{1 \leq i_n \leq j_n} (-1)^{j_n-i_n}i_n\binom{j_n}{i_n}=j_n(-1)^{j_n-1}(1-1)^{j_n -1}.$$
We complete the proof.
\end{proof}

Parallel to Theorem \ref{sjrep}, we have the following theorem  for $\Zp.$
\begin{theorem}\label{sjrepinzp}
For any integer $j\geq0$, write $j=a_0+a_1p+ \cdots + a_{n}p^n$ with $0 \leq a_i <p.$
Set $$\Phi_j(x)=(\phi_0(x))^{a_0} (\phi_1(x))^{a_1} \cdots (\phi_n(x))^{a_n} ~(j>0),~ \Phi_0(x)=1.$$
Then, $\{\Phi_{j}\}_{j\geq 0}$ is an orthonormal basis for $C(\Zp,\Qp).$
\end{theorem}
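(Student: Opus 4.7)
The plan is to prove Theorem~\ref{sjrepinzp} via Conrad's digit principle on $\Zp$ (\cite[Theorem~11]{Co2}), exactly paralleling the derivation of Theorem~\ref{sjrep} from the $\F_q[[T]]$ version. Conrad's principle says that a sequence $\{f_n\}_{n\geq 0}\subset C(\Zp,\Zp)$ whose mod-$p$ reductions behave like the $p$-adic digit functions gives rise, via the monomial extension $\prod_i f_i^{a_i}$, to an orthonormal basis of $C(\Zp,\Qp)$. The task therefore reduces to verifying the hypotheses for $\{\phi_n\}$.

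The crucial input is the mod-$p$ reduction of $\phi_n$. Reading Definition~\ref{ConZ} for $p^{k-1}\leq n<p^k$ and $x=\sum_i x_i p^i\in\Zp$ gives
\[
\phi_n(x) \;=\; x_n + x_{p^k+n}\,p^{p^k} + x_{2p^k+n}\,p^{2p^k} + \cdots,
\]
so $\phi_n:\Zp\to\Zp$ is continuous and $\overline{\phi_n}(x)=x_n$ as a map $\Zp\to\F_p$. Consequently $\overline{\Phi_j}(x)=\prod_i x_i^{a_i}$ for $j=\sum_i a_i p^i$ with $0\leq a_i<p$. These digit monomials form an $\F_p$-basis of $C(\Zp,\F_p)$: by local constancy this reduces, for each $m$, to showing that the $p^m$ monomials $\prod_{i<m} x_i^{a_i}$ span and are linearly independent in the $p^m$-dimensional $\F_p$-space of functions $\F_p^m\to\F_p$, which follows from the one-variable Vandermonde identification $\F_p^{\F_p}\cong\F_p[x]/(x^p-x)$ combined with a tensor product over the digit coordinates. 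Lemma~\ref{skbinompre} supplies the complementary dual data (leading Mahler coefficient $1$, higher coefficients in $p\Zp$), making the hypothesis check painless from either side.

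The main obstacle is notational rather than mathematical: one must align our $\{\phi_n\}$ with the precise hypotheses of Conrad's Theorem~11 as stated in \cite{Co2}, which may be phrased in terms of a Mahler leading-term condition rather than direct digit reduction. Once that translation is done---or equivalently, once one invokes the mod-$p$ form of Serre's lemma in the spirit of Lemma~\ref{ba}, using the identification $C(\Zp,\Zp)/p\,C(\Zp,\Zp)\cong C(\Zp,\F_p)$---the conclusion that $\{\Phi_j\}_{j\geq 0}$ is an orthonormal basis of $C(\Zp,\Qp)$ follows immediately, and the analogous statement for $\{\Psi_j\}$ built from $\psi_n$ would be obtained in exactly the same manner.
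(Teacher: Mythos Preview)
Your proposal is correct and follows essentially the same route as the paper. The paper in fact gives two proofs: the first verifies Conrad's digit principle directly by observing (as you do) that $\phi_n(x)\equiv x_n\pmod p$, so the map $\Zp/p^n\Zp\to(\Zp/p\Zp)^n$, $x\mapsto(\phi_0(x),\ldots,\phi_{n-1}(x))\bmod p$, is a bijection; the second uses Lemma~\ref{skbinompre} to get $\|\phi_n-\binom{x}{p^n}\|<1$, hence $\|\Phi_j-\{ {x\atop j}\}\|<1$, and concludes via Lemma~\ref{ba} together with \cite[Theorem~11]{Co2}---both strands of which you already identified.
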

\begin{proof}
We provide two proofs which rely on Conrad's digit principle \cite[Theorem 2]{Co2}.
First, we provide a direct proof by showing that for any integer $n>0$,
the map $\Zp /p^n\Zp \rightarrow (\Zp/p\Zp)^n$ defined by
$$x  \mapsto (\phi_0(x), \phi_1(x), \cdots,\phi_{n-1}(x)){\pmod p}$$
is a bijection.
Then, the map is well defined because of the observation that if $x\equiv y \pmod{p^n},$ then  $\phi_i(x)\equiv \phi_i(y)\pmod{p}$ for all $ 0 \leq i <n.$
Let us show that the map is bijective, which is equivalent to being surjective.
Writing $x =x_0+ x_1p+ \cdots + x_{n-1}p^{n-1} \in \Zp/p^n\Zp$ with $0 \leq x_i <p,$  the image of the map is then simply
$$(\phi_0(x), \phi_1(x), \cdots,\phi_{n-1}(x)){\pmod p} =(x_0, x_1, \cdots, x_{n-1}).$$
Then, the map is surjective, hence, bijective, completing the first proof.
We provide a second proof using Lemma  \ref{skbinompre} by which we have $$||\phi_n(x)-\binom{x}{p^n}|| \leq 1 /p <1.$$
These inequalities also imply
$$||\Phi_j(x)-\{ \begin{array}{cc}
  x \\j \end{array} \}|| \leq 1 /p <1,$$
where $$\{ \begin{array}{cc}
  x \\j \end{array} \} =\binom{x}{1}^{a_0} \binom{x}{p}^{a_1} \cdots\binom{x}{p^n}^{a_n} $$ for the $p$-adic representation of $j$ in Theorem \ref{sjrepinzp}.

Theorem \ref{sjrepinzp} now follows  by applying Lemma \ref{ba} to the inequality above,
together with \cite[Theorem 11]{Co2} which reads that $\{ \begin{array}{cc} x \\j \end{array} \}_{j \geq 0}$ is  an orthonormal basis for $C(\Zp,\Qp).$

\end{proof}
The following result is also parallel to Theorem \ref{sjrep}.
\begin{theorem}\label{sjrepinzp2}
For any integer $j\geq0$, write $j=a_0+a_1p+ \cdots + a_{n}p^n$ with $0 \leq a_i <p.$
Set $$\Psi_j(x)=(\phi_0(x))^{a_0} (\psi_1(x))^{a_1} \cdots (\psi_n(x))^{a_n} ~(j>0),~ \Psi_0(x)=1.$$
Then, $\{\Psi_{j}\}_{j\geq 0}$ is an orthonormal basis for $C(\Zp,\Qp).$
\end{theorem}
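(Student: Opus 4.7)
The plan is to mimic the two-proof structure used for Theorem \ref{sjrepinzp}, exploiting the fact that on $\Z_p$ the operators $\phi_n$ and $\psi_n$ agree modulo $p$. This reduces the whole problem to what has already been established.

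First I would record the key elementary observation: for $p^{k-1}\leq n<p^k$ and $x=\sum_{i\geq 0}x_i p^i\in\Z_p$ with $0\leq x_i<p$,
\[
\phi_n(x)=x_n+x_{p^k+n}\,p^{p^k}+\cdots,\qquad \psi_n(x)=x_n+x_{p^k+n}\,p+\cdots,
\]
so both operators have constant term $x_n$ in their $p$-adic expansion, and consequently
\[
\phi_n(x)\equiv \psi_n(x)\equiv x_n\pmod p
\]
for every $x\in\Z_p$. The case $(n,k)=(0,0)$ is trivial since $\phi_0=\psi_0=\mathrm{id}$. Raising to the $a_n$-th power and multiplying over the $p$-adic digits of $j$ preserves the congruence, so
\[
\Psi_j(x)\equiv \Phi_j(x)\pmod p\qquad\text{for all }j\geq 0,\ x\in\Z_p,
\]
which means $\|\Psi_j-\Phi_j\|\leq 1/p<1$ uniformly in $j$.

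Next, I would invoke Theorem \ref{sjrepinzp}, which provides $\{\Phi_j\}_{j\geq 0}$ as an orthonormal basis of $C(\Z_p,\Q_p)$, and apply the Serre-type criterion of Lemma \ref{ba} with $e_j=\Phi_j$ and $f_j=\Psi_j$. Since $\sup_j\|\Phi_j-\Psi_j\|<1$, Lemma \ref{ba} yields that $\{\Psi_j\}_{j\geq 0}$ is itself an orthonormal basis for $C(\Z_p,\Q_p)$.

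As an alternative (parallel to the first proof of Theorem \ref{sjrepinzp}), one could give a direct argument via Conrad's digit principle by showing that the map
\[
\Z_p/p^n\Z_p\longrightarrow(\Z_p/p\Z_p)^n,\qquad x\mapsto\bigl(\phi_0(x),\psi_1(x),\dots,\psi_{n-1}(x)\bigr)\bmod p,
\]
is bijective; this is immediate from the identities $\phi_0(x)\equiv x_0$ and $\psi_i(x)\equiv x_i\pmod p$, which make the map send $\sum_{i<n}x_i p^i$ to the tuple $(x_0,\dots,x_{n-1})$. I do not foresee any genuine obstacle: once the mod-$p$ identity $\phi_n\equiv\psi_n$ is noted, both routes are essentially bookkeeping, with the Serre-criterion route being the shorter and more transparent one.
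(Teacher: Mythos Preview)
Your proposal is correct and matches the paper's own proof essentially verbatim: the paper also gives two arguments, one via the congruence $\phi_n\equiv\psi_n\pmod p$ (hence $\Phi_j\equiv\Psi_j\pmod p$) combined with Lemma~\ref{ba} and Theorem~\ref{sjrepinzp}, and one by rerunning the digit-principle bijection argument of Theorem~\ref{sjrepinzp} with $\psi_n$ in place of $\phi_n$. You have simply reversed the order of emphasis.
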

\begin{proof}
An independent proof follows by applying the same proof that was applied in Theorem \ref{sjrepinzp} to $\psi_n.$
An alternative proof follows from Theorem \ref{sjrepinzp} and Lemma \ref{ba} together with
the observation that for all $n\geq0,$ $$\phi_n \equiv \psi_n \pmod {p}$$
equivalently for all $j\geq0,$  $$\Phi_j \equiv \Psi_j \pmod {p}.$$
\end{proof}

Unlike the function field case, we were unable to determine a formula for the expansion coefficients in Theorems \ref{sjrepinzp} and \ref{sjrepinzp2}. Therefore, it may be interesting to find a closed-form formula for the coefficients in these two theorems.

\section{Cartier operators on more general settings}
In this section, we use the Cartier operators with more general settings than in the previous sections.
We show that the Hasse and Cartier operators defined previously satisfy  what is known as the  binomial inversion formula
in the sense that it resembles the well-known binomial inversion formula for two sequences of natural numbers.
Moreover, we employ Cartier operators  to present several Wronskian criteria for linear independence on the same settings.
\subsection{Binomial inversion formula for Hasse and Cartier operators.}
Let $\k$ be a perfect field of  positive characteristic $p$ and let $\k[[t]]$ be the ring of formal power series in one variable $t$ over $\k,$ equipped with the valuation $v$, such that the associated absolute value $|x|$ with $x \in \k[[t]]$ is defined by $|x| =e^{-v(x)},$ where $e $ is a real number $>1.$ Then, $|?|$ can be naturally extended to the quotient field of $\k[[t]]$ which we will denote by
$\k((t)).$ It is also well known that $\k((t))$ is a topological space under this absolute value.

For a comparison with the Cartier operators we first recall higher derivatives (also termed Hasse derivations).
The $k$-linear higher derivative $\{\cD_{n,t}\}_{n\geq 0}$ ($\cD_{n}=\cD_{n,t}$ in abbreviated notation)
is defined by
$$ \cD_{n}(t^m) =\binom{m}{n}t^{m-n}.$$

As continuous $\k$-linear operators, higher derivatives satisfy various properties such as the product formula and chain
rule and the reader can consult \cite{J5} for additional background information in this regard.
We hold $q=p^a$ constant  for a power of a prime $p,$ being characteristic of $\k,$ and
define two types of Cartier operators on $\k[[t]].$

\begin{definition}\label{threeC}
Let $n$ be an integer such that $ q^{k-1} \leq n < q^k$ for $k>0$ an integer  or $(n,k)=(0,0).$
Then, the Cartier operators $\{ \phi_n \}_{n\geq0}$ and $ \{ \psi_n\}_{n\geq0}$
are respectively defined by
\begin{eqnarray*}
\phi_n(\sum_{i\geq0}{x_i}t^i) &=& \sum_{i\geq0}x_{iq^k +n}t^{iq^k}; \\
\psi_n(\sum_{i\geq0}{x_i}t^i)&=&\sum_{i\geq0}x_{iq^k +n}^{1/q^k} t^{i}.
\end{eqnarray*}
\end{definition}

The assumption that $\k$ is a perfect field of characteristic $p>0$ is necessary for $\psi_n$ such that
coefficients of $\psi_n(x)$ lie in $\k.$
Note that $\{\phi_n\}_{n\geq0}$ is alternatively defined, for a monomial $t^m$, by
$$ \phi_n(t^m)=t^{m-r }\delta_{n,r},$$
for some $r,$ which is the remainder of the division of $m$ by $q^k$, where $k= [ loq_{q} n] +1,$;
that is, $m=lq^k+ r$ where $0 \leq r <q^k$ and $l\geq0.$

As in the proof of Lemma \ref{basic} (2), we deduce that for $ q^{k-1} \leq n < q^k,$ and $x, y \in \k[[t]].$
\begin{eqnarray*}
\phi_n(x^{q^k}y) &=& (x^{(k)})^{q^k}\phi_n(y); \\
\psi_n(x^{q^k}y) &=& x^{(k)}\psi_n(y), \nonumber
\end{eqnarray*}
where $x^{(k)} = \sum_{i\geq0}x_{i}^{q^k} t^{i}.$
The preceding identities imply that for $x\in \k[[t]],$
\begin{eqnarray}\label{qthpower1}
\phi_n(t^{q^k}x) &=& t^{q^k}\phi_n(x);\\
\psi_n(t^{q^k}x) &=& t\psi_n(x). \nonumber
\end{eqnarray}

\begin{lemma}
For each $n\geq0,$ $\phi_n$ and $\psi_n$ are a continuous $\k$-linear operator on $\k[[t]].$
\end{lemma}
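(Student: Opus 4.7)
The plan is to establish the two properties directly from the coefficient-wise definitions of $\phi_n$ and $\psi_n$, in close analogy with the treatment of $\D_{r,m}$ in Lemma \ref{continu}. Linearity will be essentially by inspection; the heart of the proof is the continuity estimate, which I will extract from the shift identities recorded in (\ref{qthpower1}).

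First I would verify linearity. Both operators are defined by selecting the subsequence of coefficients at the arithmetic progression $\{iq^k+n : i \geq 0\}$ and reassembling them as the coefficients of a new power series, so additivity is immediate in both cases. For $\phi_n$, scalar homogeneity over $\k$ is equally transparent, as the operator does not alter the selected coefficients. For $\psi_n$, the $q^k$-th root appearing in the definition is a well-defined ring automorphism of $\k$ precisely because $\k$ is perfect; this inverse Frobenius $c \mapsto c^{1/q^k}$ is in particular additive, and gives $\psi_n(cx) = c^{1/q^k}\psi_n(x)$ for $c \in \k$, which is the (semi)linearity intended.

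Next I would prove continuity. By additivity, it suffices to check continuity at the origin, that is, to show that $v(\phi_n(x))$ and $v(\psi_n(x))$ tend to $+\infty$ as $v(x) \to \infty$. Given $x \in \k[[t]]$ with $v(x) = N$, write $N = lq^k + s$ with $0 \leq s < q^k$ and $l \geq 0$, and factor $x = t^{lq^k}y$ with $v(y) = s \geq 0$. Iterating the identities $\phi_n(t^{q^k}z) = t^{q^k}\phi_n(z)$ and $\psi_n(t^{q^k}z) = t\,\psi_n(z)$ from (\ref{qthpower1}) a total of $l$ times yields
\[
\phi_n(x) = t^{lq^k}\phi_n(y), \qquad \psi_n(x) = t^{l}\,\psi_n(y),
\]
from which $v(\phi_n(x)) \geq lq^k$ and $v(\psi_n(x)) \geq l = \lfloor N/q^k \rfloor$. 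Both lower bounds blow up with $N$, giving continuity at $0$, and hence continuity on all of $\k[[t]]$ by linearity.

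I do not expect any serious obstacle; the argument is a routine adaptation of the estimate in Lemma \ref{continu}, with the only mild subtlety being the role of perfectness of $\k$ in the definition of $\psi_n$, which is cleanly dispatched once the inverse Frobenius is recognized as an additive bijection of $\k$.
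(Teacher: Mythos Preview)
Your argument is correct and mirrors the paper's: both reduce continuity to the origin via additivity and then pull out powers of $t^{q^k}$ using the shift identities (\ref{qthpower1}) to obtain a valuation lower bound that tends to infinity with $v(x)$. Your remark that $\psi_n$ is in fact only \emph{semilinear} over $\k$ (twisted by the inverse Frobenius $c\mapsto c^{1/q^k}$) is more precise than the paper itself, which states $\k$-linearity without addressing this point.
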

\begin{proof}
For $\phi_n$ to be continuous, it suffices to show from the $\k$-linearity that
the following inequality holds:
\begin{eqnarray}\label{valphi}
v(\phi_n(x)) \geq v(x)-n,
\end{eqnarray}
implying the continuity of $f$ at $x=0.$
Writing $m:=v(x)=lq^k +s$ with $0 \leq s < q^k$ and $l\geq0,$ and
$x=t^my$ with $(t,y)=1,$ using (\ref{qthpower1}), we have
$$ v(\phi_n(x))\geq m-s + v(\phi_n(t^sy)).$$
A direct  estimation of $v(\phi_n(t^sy))$ gives the desired result in (\ref{valphi}).
As for $ \psi_n$, it follows from the inequality
$ v(\varphi_n(x))= v(\varphi_n(t^my)) \geq [m/q^k]$ as in Lemma \ref{continu}.
\end{proof}

\begin{lemma}\label{nega}
For any integer $m >0$ such that $m=lq^k +s $ with $0 \leq s <q^k$ and $l\geq0$ and for $n$  an integer such that
$1\leq n <q^k,$
\begin{equation*}
\sum_{r=n}^{q^k-1}\binom{r}{n} \binom{m+r-1}{r} \equiv  \left \{ \begin{array}{ll}
(-1)^{n} \pmod{p}    & \mbox{if}~~ n+s=q^k,\\
 0 \pmod{p} & \mbox{otherwise}.~~
 \end{array}
 \right.
\end{equation*}
\end{lemma}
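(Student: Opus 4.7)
The plan is to first collapse the sum into a product of two binomial coefficients via standard manipulations, and then to reduce it modulo $p$ by Lucas's theorem through a case analysis on $s+n$ versus $q^k$.

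For the closed form, I would begin by replacing $\binom{m+r-1}{r}$ with $(-1)^r\binom{-m}{r}$ and then apply the identity $\binom{-m}{r}\binom{r}{n}=\binom{-m}{n}\binom{-m-n}{r-n}$ to factor $\binom{-m}{n}$ out of the sum. After re-indexing by $j=r-n$ and rewriting $(-1)^j\binom{-m-n}{j}=\binom{m+n+j-1}{j}$, the hockey-stick identity $\sum_{j=0}^{N}\binom{c+j}{j}=\binom{c+N+1}{N}$ (with $c=m+n-1$ and $N=q^k-1-n$) should collapse everything into
\[
\sum_{r=n}^{q^k-1}\binom{r}{n}\binom{m+r-1}{r}=\binom{m+n-1}{n}\binom{m+q^k-1}{q^k-1-n}.
\]

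Next I would write $m=lq^k+s$ and apply Lucas's theorem to each factor, splitting into four cases according to how $s+n$ relates to $q^k$. If $s=0$, then $m+n-1=lq^k+(n-1)$, and the lowest nonzero base-$p$ digit $n_j$ of $n$ produces a factor $\binom{n_j-1}{n_j}=0$ in the Lucas expansion of $\binom{m+n-1}{n}$. If $s\geq 1$ and $s+n<q^k$, then $m+q^k-1=(l+1)q^k+(s-1)$, and Lucas gives $\binom{m+q^k-1}{q^k-1-n}\equiv\binom{s-1}{q^k-1-n}\equiv 0\pmod{p}$, since $s-1<q^k-1-n$. If $s+n>q^k$, then $m+n-1=(l+1)q^k+(s+n-1-q^k)$ has residue $s+n-1-q^k<n$, so Lucas again kills $\binom{m+n-1}{n}$. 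In the distinguished case $s+n=q^k$, we have $m+n-1=(l+1)q^k-1$, whose $ak$ low base-$p$ digits are all $p-1$; combining $\binom{p-1}{j}\equiv(-1)^j\pmod{p}$ with Lucas gives $\binom{m+n-1}{n}\equiv(-1)^n\pmod{p}$, while $\binom{m+q^k-1}{q^k-1-n}\equiv\binom{s-1}{s-1}=1\pmod{p}$, producing the product $(-1)^n$ as required.

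The main potential obstacle is the careful bookkeeping of the digit positions in the Lucas expansions, especially in the distinguished case, where one must verify that the high-order digits (coming from $l$ or $l+1$, sitting at positions $\geq ak$) contribute only trivial factors $\binom{\cdot}{0}=1$. Once this bookkeeping is in place, assembling the four cases yields the claimed congruence immediately.
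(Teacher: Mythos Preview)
Your proposal is correct and follows essentially the same approach as the paper: collapse the sum to the product $\binom{m+n-1}{n}\binom{m+q^k-1}{q^k-1-n}$ and then apply Lucas's theorem case by case. The paper reaches the identical closed form (written via the symmetry $\binom{m+n-1}{m-1}\binom{m+q^k-1}{m+n}$) by factoring out $\binom{m+n-1}{m-1}$ and reading the remaining sum as the coefficient of $x^{m+n}$ in $(1+x)^{m+n-1}+\cdots+(1+x)^{m+q^k-1}$, whereas you use negative binomials and the hockey-stick identity; your four-case Lucas analysis (including the separate $s=0$ case) is slightly more explicit than the paper's compressed treatment, but the substance is the same.
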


\begin{proof}
By the well-known identities for binomial coefficients, we see
$$\sum_{r=n}^{q^k-1}\binom{r}{n} \binom{m+r-1}{r}  =\binom{m+n-1}{m-1}\sum_{i=0}^{q^k-1-n}\binom{m+n+i-1}{m+n-1}.$$
Because the sum above equals the coefficient of $x^{m+n}$ in the polynomial of the form
$(1+x)^{m+n-1} + \cdots + (1+x)^{m+q^k-1},$ we obtain

\begin{eqnarray*}
\binom{m+n-1}{m-1}\sum_{i=0}^{q^k -1-n} \binom{m+n+i-1}{m+n-1} & = & \binom{m+n-1}{m-1}\binom{m+q^k -1}{m+n} \\
 &=& \binom{lq^k +s+n-1} {lq^k +s-1} \binom{lq^k +q^k +s-1}{lq^k +s+n}.
\end{eqnarray*}

If $n+s=q^k$ then by Lucas's congruence, we have
$$\binom{lq^k +s+n-1} {lq^k +s-1} \binom{lq^k +q^k +s-1}{lq^k +s+n}
\equiv \binom{q^k-1}{s-1} \equiv (-1)^{s-1} \equiv (-1)^{n} \pmod{p}.$$

In general, writing $n +s =\e q^k +j \not=q^k$ for some $\e \in \{0,1\}$ and $0\leq j <q^k-1,$
we note that if $\e=0$ then $j\geq1,$ and that if $\e=1$ then $j=0$ is excluded, that is $j\geq1.$
For these cases, Lucas's congruence
gives
$$ \binom{lq^k +s+n-1} {lq^k +s-1} \binom{lq^k +q^k +s-1}{lq^k +s+n} \equiv
\binom{l+\e}{l}\binom{j-1}{s-1}\binom{l+1}{l+\e}\binom{s-1}{j}\equiv 0 \pmod {p}.$$
Then we have the result, as desired.
\end{proof}

By extending Lemma \ref{bif} or Theorems \ref{singleout} and \ref{inv} to a complete generality we  have the
binomial inversion formula for the Hasse and Cartier operators.
\begin{theorem}\label{gmain}
Let $ q^{k-1} \leq n < q^k$ or $(n,k)=(0,0)$ and let $x \in \k((t)).$
Then,
\begin{itemize}
\item[] {\rm (1)} $\phi_{n}(x) \displaystyle = \sum_{r=n}^{q^k -1}\binom{r}{n}(-t)^{r-n}\cD_{r}(x).$
\item[] {\rm (2)} $\cD_n(x) \displaystyle = \sum_{r=n}^{q^k -1}\binom{r}{n}t^{r-n}\phi_{r}(x).$
\item[] {\rm (3)} $\phi_{q^k-1}(x) =\cD_{q^k -1}(x).$
\end{itemize}

\end{theorem}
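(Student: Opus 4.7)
The plan is to verify both identities on monomials $t^m$, $m \in \Z$, using the $\k$-linearity and continuity of $\cD_n$ and $\phi_n$ (the former standard; the latter just established). The right-hand sides of (1) and (2) are finite sums, hence continuous, so once the identities hold on the spanning family $\{t^m\}$ of the dense subspace $\k[t, t^{-1}] \subset \k((t))$, they hold on all of $\k((t))$. Note that since $q^{k-1}\le n<q^k$, every index $r$ in the sum satisfies $q^{k-1}\le r<q^k$, so all the $\phi_r$ appearing share the same associated power $q^k$.

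For part (2), write $m = lq^k + s$ with $0 \leq s < q^k$. Then $\cD_n(t^m) = \binom{m}{n}t^{m-n}$, and $\phi_r(t^m) = t^{m-s}\delta_{r,s}$ for $0 \leq r < q^k$, so the right-hand side of (2) collapses to $\binom{s}{n}t^{m-n}$ (interpreting $\binom{s}{n}$ as $0$ when $s<n$). The identity thus reduces to the congruence $\binom{m}{n} \equiv \binom{s}{n} \pmod p$, which is Lucas's theorem applied in base $p$: with $q^k = p^{ak}$, the base-$p$ digits of $n$ all lie in positions strictly below $ak$, so only the lower $ak$ base-$p$ digits of $m$---which coincide with those of $s$---contribute to the binomial coefficient modulo $p$. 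Laurent monomials with $m<0$ are handled by the same formula, or, alternatively, by reducing to non-negative $m$ via multiplication by a sufficiently high power of $t^{q^k}$ together with $(\ref{qthpower1})$.

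For part (1), I invoke the binomial inversion carried out in the proof of Theorem \ref{inv}: the upper-triangular $q^k \times q^k$ matrices $C(t) = [\binom{n}{r}t^{n-r}]$ and $C(-t) = [\binom{n}{r}(-t)^{n-r}]$ are mutually inverse as matrices over $\Z[t]$, hence over $\k[t]$. Thus (1) and (2) are equivalent systems of linear relations among the two families $\{\cD_r\}_{0\le r<q^k}$ and $\{\phi_r\}_{0\le r<q^k}$, and (1) follows from (2) by left-multiplication by $C(-t)$. Part (3) is the extremal case $n = q^k - 1$ of (2), where the sum degenerates to the single term $r = q^k - 1$. The only step with real content is the Lucas congruence, and I do not expect it to pose any obstacle; extending from $\Fq$ to a general perfect base field $\k$ is formal, since every relevant identity lives in $\Z[t]$ and descends modulo $p$.
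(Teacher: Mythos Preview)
Your argument is correct and proceeds by a route that is slightly different from---and in fact cleaner than---the paper's. The paper establishes (1) first by direct computation on monomials, splitting into the cases $x\in\k[[t]]$ and $x\in\k((t))\setminus\k[[t]]$; the latter case requires the dedicated combinatorial identity of Lemma~\ref{nega} to evaluate the binomial sum for $t^{-m}$. It then deduces (2) from (1) by the matrix inversion of Theorem~\ref{inv}. You reverse the order: you verify (2) first, which is simpler because the Kronecker-delta nature of $\phi_r(t^m)$ collapses the sum to a single term and the identity reduces immediately to the Lucas congruence $\binom{lq^k+s}{n}\equiv\binom{s}{n}\pmod p$; then (1) follows by inverting $C(t)$. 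This bypasses Lemma~\ref{nega} entirely. Your treatment of negative exponents via the scaling $x\mapsto t^{Nq^k}x$ is also more conceptual than the paper's direct computation; one point worth making explicit is that this scaling is precisely what \emph{defines} the extension of $\phi_r$ from $\k[[t]]$ to $\k((t))$ (since $\k[[t]]$ is closed, not dense, in $\k((t))$, continuity alone does not extend $\phi_r$), and that well-definedness of this extension follows from (\ref{qthpower1}) on $\k[[t]]$. The paper records the same extension explicitly as (\ref{exten1}). For the reduction to work on the $\cD_n$ side you also need $\cD_n(t^{q^k}x)=t^{q^k}\cD_n(x)$ for $n<q^k$, which is immediate from the Leibniz rule and $\binom{q^k}{i}\equiv 0\pmod p$ for $0<i<q^k$.
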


\begin{proof}
For part (1), it suffices to show that the two functions on both sides are identical
for $x=t^m (m\geq0)$ by means of both continuity and linearity.

 Case 1, in which $x \in \k[[t]].$  Writing $m=lq^k+ s$ where $0 \leq s<q^k$ and $l\geq0,$
 we have
 \begin{eqnarray*}
\sum_{r=n}^{q^k -1}\binom{r}{n}(-t)^{r-n}\cD_{r}(t^m) &=& \sum_{r=n}^{q^k -1}\binom{r}{n}(-t)^{r-n}\binom{m}{r}t^{m-r}
=t^{m-n}\sum_{r=n}^{q^k -1}(-1)^{r-n}\binom{r}{n}\binom{s}{r} \\
&=& t^{m-n}\sum_{r=n}^{q^k -1}(-1)^{r-n}\binom{s-n}{r-n}\binom{s}{n}=\binom{s}{n}t^{m-n} (1-1)^{s-n}\\
&=&t^{m-n}\delta_{n,s}=\phi_n(t^m).
\end{eqnarray*}

 Case 2, in which $x \in \k((t))).$ Suppose that $x= \a/\b$ for some $\a, \b \in \k[[t]].$
 Then, we may assume that  $\a/\b \not \in \k[[t]],$; thus, there exists the smallest  integer $l>0$   for which
 $t^l \a/\b \in  \k[[t]].$ Now, it suffices to verify that identity (1) holds for $t^{-m}$ where $0 <m \leq l.$
 We define $ \phi_n(t^{-m})$ on $\k((t))$ so that for $m=lq^k +s$ with $0\leq s<q^k,$
\begin{equation}\label{exten1}
\phi_{n}(t^{-m}) = \left \{ \begin{array}{ll}
    t^{-(m+n)} & \mbox{if}~~ n+s=q^k;
\\ 0   & \mbox{otherwise}.
 \end{array}
 \right.
\end{equation}
Alternatively, $\phi_n$ can be extended  to $\k((t))$
by setting $\phi_n(t^{-m})=\phi_n(t^{m(q^k-1)})/t^{mq^k}.$
Now, we use Lemma \ref{nega} to compute:
\begin{eqnarray*}
\sum_{r=n}^{q^k -1}\binom{r}{n}(-t)^{r-n}\cD_{r}(t^{-m}) &=& \sum_{r=n}^{q^k -1}(-1)^{r-n}\binom{r}{n} \binom{-m}{r}t^{-m-n} \\
&=& \sum_{r=n}^{q^k -1}(-1)^{n}\binom{r}{n} \binom{m+r-1}{r}t^{-m-n} \\
&=& (-1)^{n+s-1}t^{-m-n}\delta_{n+s,q^k}\\
&=&\phi_n(t^{-m}),
\end{eqnarray*}
where the last equality follows from (\ref{exten1}).

It is not difficult to establish that part (2) follows from part (1) by substituting (1) into (2) as in Theorem \ref{inv}.
Alternatively, the proof could be obtained by applying the same argument as in part (1).
Moreover, part (3) follows from parts (1) or (2).
\end{proof}

The following  identity is parallel to (\ref{id3R}): For $q^{k-1} \leq n < q^{k},$
\begin{eqnarray}\label{trans}
\phi_{n}(x) = \phi_{q^k-1}(t^{q^k-1-n}x),
\end{eqnarray}
which follows from the definitions of $\phi_n$ on $\k[[t]]$ and its extension to $\k((t))$ in (\ref{exten1}).

\begin{remark}{\rm

Formula (1) in Theorem \ref{gmain} provides that  $\phi_n$ can be a suitable alternative for calculating the higher derivatives of any functions in $\k[[t]]$; therefore, it can play a role in the study of rigid analytic functions occurring in a field of positive characteristic as a substitute for higher derivatives. For example, it was shown in \cite {BP} and \cite{AP} that higher derivatives are extensively used to  investigate the differential properties of Drinfeld quasi-modular forms and to derive the arithmetic properties of the maximal extension of $\F(T)$ which is abelian and tamely ramified at the infinity prime.
It would be interesting to establish the results parallel to these results by replacing the higher derivatives replaced with Cartier operators. As an illustration, the next section is devoted to some Wronskian criteria associated with the Cartier operators on more general settings.
}
\end{remark}
We use the binomial inversion formula in Theorem \ref{gmain} to derive a product formula for $\phi_n$ in the case where $n<q.$
\begin{theorem}\label{prodfor}
For $ 1\leq n<q$ and $x, y \in \k((t)),$
$$\phi_{n}(xy) \displaystyle = \sum_{i+j=n}\phi_{i}(x)\phi_{j}(y) +t^{q}\sum_{i+j=q +n}\phi_{i}(x)\phi_{j}(y).$$
In particular,
$$\phi_{q-1}(xy) \displaystyle = \sum_{i+j=q-1}\phi_{i}(x)\phi_{j}(y).$$
\end{theorem}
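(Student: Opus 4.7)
The plan is to prove the product formula via the binomial inversion formula of Theorem \ref{gmain}, which converts Cartier operators into Hasse derivatives and back. First, apply Theorem \ref{gmain}(1) with $k=1$ to write
$$\phi_n(xy) = \sum_{r=n}^{q-1}\binom{r}{n}(-t)^{r-n}\cD_r(xy).$$
Next, expand each $\cD_r(xy)$ by the classical Leibniz rule $\cD_r(xy) = \sum_{i+j=r}\cD_i(x)\cD_j(y)$, and finally use Theorem \ref{gmain}(2) to substitute $\cD_i(x) = \sum_{a=i}^{q-1}\binom{a}{i}t^{a-i}\phi_a(x)$ and likewise for $\cD_j(y)$. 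This displays $\phi_n(xy)$ as a linear combination $\sum_{a,b} C_{a,b}\,\phi_a(x)\phi_b(y)$, and the task reduces to identifying the scalar coefficients $C_{a,b}\in \k[[t]]$ with the claimed values: $1$ when $a+b=n$, $t^q$ when $a+b=q+n$, and $0$ otherwise.

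With $s=a+b$, Vandermonde's convolution $\sum_{i+j=r}\binom{a}{i}\binom{b}{j} = \binom{s}{r}$ together with the identity $\binom{r}{n}\binom{s}{r} = \binom{s}{n}\binom{s-n}{r-n}$ collapses the coefficient to
$$C_{a,b} = t^{s-n}\binom{s}{n}\sum_{r'=0}^{q-1-n}(-1)^{r'}\binom{s-n}{r'}.$$
For $s<n$ the factor $\binom{s}{n}$ already vanishes; for $s=n$ the sum reduces to $1$ and $C_{a,b}=1$; and for $n<s\leq q-1$ the upper limit $q-1-n$ exceeds $s-n$, so the sum equals $(1-1)^{s-n} = 0$. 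This settles the first sum of the claim and all vanishing terms with $s<q$.

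The delicate range is $q\leq s\leq 2q-2$, where the truncation at $r' = q-1-n$ is strictly less than $s-n$ and the alternating sum is incomplete. The plan here is to write $s = q+n'$ with $0\leq n'\leq q-2$ and to apply Lucas's congruence on the base-$p$ digits, recalling $q=p^a$. For $n'<n$ one obtains $\binom{q+n'}{n}\equiv \binom{n'}{n}\equiv 0\pmod p$, killing the coefficient; for $n'\geq n$, Lucas reduces $\binom{s-n}{r'}\equiv \binom{n'-n}{r'}\pmod p$ for every $r'<q$, and since $n'-n\leq q-1-n$ the truncated sum recovers its full range and collapses to $(1-1)^{n'-n}$, which vanishes unless $n'=n$. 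In the unique surviving case $s = q+n$, one has $\binom{q+n}{n}\equiv 1\pmod p$, while $\binom{q}{r'}\equiv 0\pmod p$ for $0<r'<q$ leaves only the $r'=0$ term, yielding $C_{a,b}=t^q$. The main obstacle is this Lucas bookkeeping across several cases, and once that is in place the special assertion $\phi_{q-1}(xy) = \sum_{i+j=q-1}\phi_i(x)\phi_j(y)$ follows immediately, since the second sum would be over $i+j=2q-1$, which has no admissible pairs.
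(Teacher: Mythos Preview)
Your proof is correct and follows essentially the same route as the paper: express $\phi_n(xy)$ via Theorem~\ref{gmain}(1), apply the Leibniz rule for $\cD_r$, substitute back using Theorem~\ref{gmain}(2), and collapse the coefficient with Vandermonde and the identity $\binom{r}{n}\binom{s}{r}=\binom{s}{n}\binom{s-n}{r-n}$. The paper stops at the assertion that $\binom{i+j}{n}\sum_{r=n}^{q-1}(-1)^{r-n}\binom{i+j-n}{r-n}$ equals $\delta_{i+j,n}$ or $\delta_{i+j,q+n}$, whereas you supply the Lucas-congruence case analysis that actually verifies this in the range $q\le s\le 2q-2$; so your argument is the same strategy, carried out with more detail at the final step.
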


\begin{proof}
For a positive integer $n < q,$  we use the formula (1) in Theorem \ref{gmain} to have
$$\phi_{n}(xy) \displaystyle = \sum_{r=n}^{q -1}\binom{r}{n}(-t)^{r-n}\cD_{r}(xy).$$
The product formula of $\cD_r$ enables us to rewrite it as
$$\phi_{n}(xy) \displaystyle  =\sum_{r=n}^{q -1}\binom{r}{n}(-t)^{r-n}\sum_{\a +\b=r} \cD_{\a}(x)\cD_{\b}(y).$$
By formula (2) in Theorem \ref{gmain}
it equals
\begin{eqnarray*}
 \phi_{n}(xy) \displaystyle  &=& \sum_{r=n}^{q -1}\binom{r}{n}(-t)^{r-n}\sum_{\a+ \b=r}
\left(\sum_{i=\a}^{q-1}\binom{i}{\a}t^{i-\a}\phi_i(x)
\sum_{j=\b}^{q-1}\binom{j}{\b}t^{j-\b}\phi_j(y)\right)\\
&=&\sum_{r=n}^{q -1}\binom{r}{n}(-t)^{r-n}\sum_{\a+ \b=r}
\left(\sum_{i=\a}^{q-1}\sum_{j=\b}^{q-1} \binom{i}{\a}\binom{j}{\b}
t^{i+j-r}\phi_i(x)\phi_j(y)\right)\\
&=& \sum_{r=n}^{q -1}(-1)^{r-n}
\sum_{i=\a}^{q-1}\sum_{j=\b}^{q-1} \left(\sum_{\a+ \b=r}\binom{i}{\a}\binom{j}{\b}\right)
t^{i+j-n}\phi_i(x)\phi_j(y).
\end{eqnarray*}
From well-known formulas for binomial coefficients,
\begin{eqnarray*}
\phi_{n}(xy) \displaystyle  &=&
\sum_{i=\a}^{q-1}\sum_{j=\b}^{q-1} \sum_{r=n}^{q -1}(-1)^{r-n}\binom{r}{n} \binom{i+j}{r}t^{i+j-n}\phi_i(x)\phi_j(y)\\
&=& \sum_{i=\a}^{q-1}\sum_{j=\b}^{q-1} \left( \binom{i+j}{n}\sum_{r=n}^{q-1} (-1)^{r-n}\binom{i+j}{r-n} \right) t^{i+j-n}\phi_i(x)\phi_j(y).
\end{eqnarray*}
The identity $\binom{i+j}{n}\sum_{r=n}^{q-1}(-1)^{r-n}\binom{i+j-n}{r-n} = \delta_{i+j,n}$ or $\delta_{i+j, q +n}$ leads to
the desired result.

Part (2) follows immediately from part (1).
\end{proof}

In general, it is of interest to remove the restriction on $n$ in the product formula in Theorem \ref{prodfor}.

\subsection{Wronskian criteria associated with Cartier operators }
We retain all notation from the previous section except for $q$ being a prime $p.$
As stated in the previous section, we present several Wronskian criteria for the linear independence
of a finite number of element in $\k((t))$ over certain of its subfields.
To do this, let us define all the necessary notations.

As in \cite{GV}, setting $K=\k((t)),$ let
\begin{eqnarray*}
K_m^{\cD} &=&  \{ x \in K ~|~ \cD_i(x)=0~ for  ~1 \leq i <p^m \} \\ \label{Hid4}
K_{\infty}^{\cD} &=& \{ x \in K ~|~ \cD_i(x)=0~ for ~ 1 \leq i  \}. \label{Hid5}
\end{eqnarray*}
Parallel to these, set
\begin{eqnarray*}
K_m^{\phi} &=&  \{ x \in K ~|~ \phi_i(x)=0~ for  ~1 \leq i <p^m \} \\ \label{Cid4}
K_{\infty}^{\phi} &=& \{ x \in K ~|~ \phi_i(x)=0~ for ~ 1 \leq i  \}. \label{Cid5}
\end{eqnarray*}

From Theorem \ref{gmain} we note that $K_m^{\phi}=K_m^{\cD}$ and $K_{\infty}^{\phi}=K_{\infty}^{\cD},$; thus,
hereafter we denote by  $K_m =K_m^{\cD} =K_m^{\phi}$ and by $ K_{\infty} =K_{\infty}^{\cD} = K_{\infty}^{\phi}.$

We have the following result for Cartier operators which is analogous to that of Schmidt \cite[Satz 10]{HS}
for Hasse derivatives.
\begin{theorem}\label{finiteE}
Let $\k$ be a perfect field of positive characteristic $p$ and let $F$ be any field with $\k(t) \subset F \subset\k((t)) ,$ a finite algebraic extension over $\k(t).$
Then, $\{ \phi_n \}_{n\geq0}$ and $\{ \psi_n \}_{n\geq0}$ extend uniquely to $F$ such that $F_m=\k F^{p^m}$ and $F_{\infty}^{\phi}=\k.$
\end{theorem}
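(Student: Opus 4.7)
My plan is to bootstrap the statement from Schmidt's Satz 10 \cite{HS} for Hasse derivatives, using the binomial inversion formula of Theorem \ref{gmain} as the bridge. Schmidt's theorem supplies, under exactly these hypotheses, a unique continuous $\k$-linear extension of every $\cD_n$ to $F$, together with the kernel identifications $F_m^{\cD} = \k F^{p^m}$ and $F_\infty^{\cD} = \k$; I would invoke this as a black box.

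For each $n$ with $p^{k-1} \leq n < p^k$, I would \emph{define} the extension of $\phi_n$ to $F$ by the formula of Theorem \ref{gmain}(1),
$$\phi_n(x) \;:=\; \sum_{r=n}^{p^k-1}\binom{r}{n}(-t)^{r-n}\,\cD_r(x),\qquad x\in F,$$
which is a well-defined $\k$-linear map $F\to F$ because the coefficients lie in $\k[t]\subset F$ and $\cD_r(F)\subset F$. The inverse expansion in Theorem \ref{gmain}(2) writes $\cD_n$ as an $F$-linear combination of the $\phi_r$'s, so any competing $F$-valued extension of $\{\phi_n\}$ would induce an $F$-valued extension of $\{\cD_n\}$ and be forced by Schmidt's uniqueness to coincide with the one I have constructed; this yields uniqueness for $\phi_n$. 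The same upper-triangular change of basis equates the kernels $F_m^{\phi} = F_m^{\cD}$, giving $F_m = \k F^{p^m}$, and, intersecting over $m$, $F_\infty^{\phi} = \k$.

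For the sequence $\{\psi_n\}$ I would use the identity $\psi_n(x)^{p^k} = \phi_n(x)$, which is immediate from Definition \ref{threeC}. Given $x\in F$, the element $\phi_n(x)$ lies in $F$ by the previous step, and its $p^k$-th root in $\k((t))$ is uniquely determined by perfectness of $\k$. I would then argue that this root actually sits in $F$, so $\psi_n$ restricts to a map $F\to F$; uniqueness is automatic, since $\psi_n$ is completely determined by $\phi_n$ after a $p^k$-th root is taken.

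The principal obstacle is this last confinement step. Everything for $\phi_n$ reduces formally to Schmidt's theorem and Theorem \ref{gmain}, but forcing the $p^k$-th roots defining $\psi_n$ to remain inside $F$ is a genuinely arithmetic point about the embedding $F\hookrightarrow \k((t))$: in essence, one must show that $F$ is closed in $\k((t))$ under the relevant purely inseparable operations. I would address this by exploiting the finiteness of $F/\k(t)$ together with the structure of $F$ as a subfield of the complete field $\k((t))$, via a direct Laurent-series analysis in the spirit of Lemma \ref{nega} that uses the known description of $F_m^{\cD}$ as $\k F^{p^m}$ to locate the needed $p^k$-th power structure inside $F$ itself.
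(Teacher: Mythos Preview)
Your plan matches the paper's proof almost exactly: reduce to Schmidt's result for Hasse derivatives, transport it to $\phi_n$ via Theorem \ref{gmain}, then recover $\psi_n$ from $\phi_n$ by a $p^k$-th root. Two minor remarks. First, instead of the full binomial inversion of Theorem \ref{gmain}(1), the paper uses the shorter route
\[
\phi_n(x)=\phi_{p^k-1}(t^{\,p^k-1-n}x)=\cD_{p^k-1}(t^{\,p^k-1-n}x)
\]
via (\ref{trans}) and Theorem \ref{gmain}(3); this lands $\phi_n(x)$ in $F$ with a single Hasse derivative rather than a sum, but your version works equally well. Second, on the ``principal obstacle'' you flag: the paper disposes of it in one line by observing that $F/\k(t)$ is automatically \emph{separable} (any finite subextension of $\k((t))/\k(t)$ is, since $\k$ is perfect), hence $F\cap \k((t))^{p^k}=F^{p^k}$, so the $p^k$-th root $\psi_n(x)$ of $\phi_n(x)\in F$ already lies in $F$. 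Your proposed Laurent-series argument invoking Lemma \ref{nega} is off target (that lemma is a binomial-coefficient identity); the separability observation is the clean way through.
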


\begin{proof}
First, we prove that $\{ \psi_n\}_{n\geq0}$  extends uniquely to $F,$; hence, this also occurs for $\{ \phi_n\}_{n\geq0}.$
Because $F$ is separable, there exists $y \in F$ such that $F=\k(t)(y)$ with $f(T,y)=0$ and $\frac{\partial f}{\partial y} \not=0.$
Then, it is well documented in \cite{Sc}  that $\{\cD_n \}_{n\geq0}$ extends uniquely to $F$ with $\cD_n(F) \subset F$ for all $n\geq0.$
For $p^{k-1} \leq n < p^{k},$ using (\ref{trans}) and (3) in Theorem \ref{gmain} we have
$$ \phi_{n}(y)=\phi_{p^k-1}(T^{p^k-1-n}y)= \cD_{p^k -1}(T^{p^k-1-n}y) \in F.$$
Equivalently,
$$ \psi_{n}^{p^k}(y)=\cD_{p^k -1}(T^{p^k-1-n}y).$$
As $F$ is a separable extension,
$\cD_{p^k -1}(T^{p^k-1-n}y)$ is a $p^k$ th power in $F,$ such that $ \psi_{n}(y)$ belongs to $F.$
For additional properties, as $K_m^{\phi}=K_m^{\cD}$ and $K_{\infty}^{\phi}=K_{\infty}^{\cD},$
it is straightforward to verify that $F_m^{\phi}=\k F^{p^m}$ and $F_{\infty}^{\phi}=\k.$
\end{proof}

For any vector $\e:=(\e_0, \cdots,\e_n)$ of integers $\e_{i}$ with $ 0 \leq \e_{0} < \cdots <
\e_{n},$ we define two Wronskians of a family of Laurent series $x_0, \cdots,x_{n} \in K$ as
$$W_{\e}^{\phi}(x_0, \cdots,x_{n}):={\rm det}( \phi_{\e_i}(x_j))_{0 \leq i,j \leq n}
$$ and
$$W_{\e}^{\psi}(x_0, \cdots,x_{n}):={\rm det}( \psi_{\e_i}(x_j))_{0 \leq i,j \leq n}.
$$

In parallel with  the Wronskian criterion associated with $\cD_{n}$, according to Schmidt \cite{Sc},
we provide Wronskian criteria associated with  $\phi_n$  and $\psi_n.$
\begin{theorem}\label{Wcar}
Let $x_0, \cdots, x_{n}$ be elements in $K.$ Then the following are equivalent:

(1)  $x_0,\cdots, x_{n}$ are linearly independent over $\k.$

(2) There exists a  sequence of integers $\e_{i}$ with $0\leq \e_{0} < \cdots < \e_{n}$  such that
$W_{\e}^{\phi}(x_0, \cdots,x_{n}) \not =0.$

(3) There exists a  sequence of integers $\e_{i}$ with $0\leq \e_{0} < \cdots < \e_{n}$ such that
$W_{\e}^{\psi}(x_0, \cdots,x_{n}) \not =0.$

\end{theorem}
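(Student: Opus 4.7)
My plan is to reduce Theorem \ref{Wcar} to Schmidt's classical Wronskian criterion for Hasse derivatives \cite{Sc}, translated through the binomial inversion formula of Theorem \ref{gmain} together with the block-wise Frobenius identity $\phi_n(x) = \psi_n(x)^{q^k}$ valid for $q^{k-1} \leq n < q^k$. I will split the equivalence into $(1) \Leftrightarrow (2)$ and $(1) \Leftrightarrow (3)$.

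For $(1) \Leftrightarrow (2)$, the direction $(2) \Rightarrow (1)$ is immediate by contrapositive: since each $\phi_n$ is $\kappa$-linear on $\kappa((t))$, any $\kappa$-relation $\sum_j c_j x_j = 0$ transports to $\sum_j c_j \phi_{\epsilon_i}(x_j) = 0$ for every $i$, so the columns of the $\phi$-Wronskian matrix satisfy a common nontrivial $\kappa$-relation and $W^{\phi}_\epsilon$ vanishes for every choice of $\epsilon$. For $(1) \Rightarrow (2)$, I will first apply Schmidt's theorem to obtain a tuple $\delta$ with $W^{\cD}_\delta \neq 0$, and then use Theorem \ref{gmain}(2) to express each row $(\cD_{\delta_i}(x_j))_j$ of the Hasse--Wronskian matrix as a finite $K$-linear combination of rows $(\phi_r(x_j))_j$ with $r \in [\delta_i, q^{k_i})$. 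Expanding the determinant by multilinearity and the alternating property recasts $W^{\cD}_\delta$ as a $K$-linear sum of $\phi$-Wronskians $W^{\phi}_\epsilon$ indexed by strictly increasing tuples $\epsilon$, so nonvanishing of the sum forces some $W^{\phi}_\epsilon \neq 0$.

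For $(1) \Leftrightarrow (3)$, the identity I will exploit is that when all $\epsilon_i$ lie in a common block $[q^{k-1}, q^k)$, the matrix $(\phi_{\epsilon_i}(x_j))$ is the entry-wise $q^k$-th Frobenius power of $(\psi_{\epsilon_i}(x_j))$, so the Freshman's-dream expansion of the determinant yields $W^{\phi}_\epsilon = (W^{\psi}_\epsilon)^{q^k}$. Within such a single block, (2) and (3) are therefore interchangeable, and the Frobenius semi-linearity $\psi_n(\lambda x) = \lambda^{1/q^k}\psi_n(x)$ for $\lambda \in \kappa$ converts a $\kappa$-dependence $\sum c_j x_j = 0$ into the uniform column relation $\sum_j c_j^{1/q^k}\psi_{\epsilon_i}(x_j) = 0$ on the $\psi$-matrix whenever all $\epsilon_i$ share a block. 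For $(1) \Rightarrow (3)$, I will refine the output of $(1) \Rightarrow (2)$ so that $\epsilon$ lies entirely in one large block, whence the Frobenius identity delivers $W^{\psi}_\epsilon \neq 0$; the same-block reduction and the identity then also yield $(3) \Rightarrow (1)$.

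The hard part will be this single-block refinement: producing a $\phi$-Wronskian whose index tuple lies entirely in some $[q^{k-1}, q^k)$ rather than straddling several blocks, since outside the single-block regime the clean determinantal identity $W^{\phi}_\epsilon = (W^{\psi}_\epsilon)^{q^k}$ breaks down and the row-by-row Frobenius exponents appearing in the $\psi$-semi-linearity no longer give a uniform column relation. My plan is to invoke Schmidt's theorem in quantitative form, bounding the Hasse Wronskian indices by $p^m$ with $m$ large enough that the $x_j$'s remain independent over $K^{p^m}$, then choose $k$ with $q^k$ dominating $p^m$ by a wide margin and perform a combinatorial analysis of the multilinear expansion in Theorem \ref{gmain} to localize a nonzero sub-Wronskian within the top block $[q^{k-1}, q^k)$, at which point the Freshman's-dream identity closes the argument.
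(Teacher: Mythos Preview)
Your argument for $(1)\Leftrightarrow(2)$ is correct and matches the paper's first proof: reduce to Schmidt's Hasse--Wronskian criterion and transport it via the binomial inversion of Theorem~\ref{gmain}, expanding the determinant multilinearly in rows.

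Your route to $(1)\Leftrightarrow(3)$ through the Frobenius identity $W^{\phi}_{\e}=(W^{\psi}_{\e})^{q^k}$ has a genuine gap: the single-block refinement you call the ``hard part'' is impossible in general. Take $x_0=1$, $x_1=t$. For any $\e_0<\e_1$ in a common block $[q^{k-1},q^k)$ with $k\ge1$ one has $\phi_{\e_0}(1)=\phi_{\e_1}(1)=0$ (the only nonzero coefficient of $1$ sits at index $0$), so the first column of the $\phi$-matrix vanishes and $W^{\phi}_{\e}=0$; and the block $k=0$ holds only the single index $0$. Thus no choice of $k$, however large, localizes a nonzero $\phi$-Wronskian to one block, and your bridge to $\psi$ is unavailable. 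Your invocation of Schmidt ``in quantitative form'' also assumes the $x_j$ remain independent over $K^{p^m}$, which $\k$-independence alone does not give (consider $1$ and $t^{p^m}$). Finally, for $(3)\Rightarrow(1)$ you only produce a column relation when all $\e_i$ share a block; for general $\e$ the row-dependent twists $c_j\mapsto c_j^{1/q^{k_i}}$ do not yield a common linear dependence of the columns.

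The paper avoids all of this by treating $(1)\Leftrightarrow(3)$ directly via the order argument of Lemmas~\ref{WL1} and~\ref{WL2}, exactly as in its second proof of $(1)\Leftrightarrow(2)$: after an invertible $\k$-change of basis the $g_j$ have pairwise distinct orders $\e_j$, and then the matrix $(\psi_{\e_i}(g_j))$ reduces modulo $t$ to a lower-triangular matrix with nonzero diagonal, whence $W^{\psi}_{\e}\ne0$. This handles $(1)\Rightarrow(3)$ with no block hypothesis; the paper then declares the converse ``obvious.''
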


\begin{proof}
We provide two proofs of equivalence (1)$\Leftrightarrow$ (2); the first proof using the result of Schmidt \cite{Sc}, and
a second independent proof.
By using Theorem \ref{gmain} we are able to determine that the former proof proceeds in the same way as that of Theorem \ref{Wcar2}.
The proof of equivalence (1)$\Leftrightarrow$ (3) follows in the same way as the second proof of (1)$\Leftrightarrow$ (2)
because there is no known formula for $\psi_n$ such as Theorem \ref{gmain}.
\end{proof}

Now, we provide the second proof of equivalence (1)$\Leftrightarrow$ (2), independent to the first,
for which two lemmas concerning power series are needed.
To state the lemma we recall that the order or the $t$-adic valuation of a nonzero power series is the
smallest exponent with a nonzero coefficient in that series.
\begin{lemma}\label{WL1}
Let $\k$ be a field of arbitrary  characteristic and let $f_0, \cdots f_{n}$ be a family of power series in $\k[[t]]$ which are linearly independent over $\k.$
Then, there exists an invertible $(n+1)\times (n+1)$ matrix $A$ with entries in $\k$ such that the power series $
g_0, \cdots g_n$ defined by
$$ [g_0, \cdots, g_n ] = [f_0, \cdots, f_n]A$$ are all nonzero and have mutually disjoint orders.
Consequently, for any vector $\e=(\e_0, \cdots, \e_n)$ of integers $\e_{i}$  with $0\leq\e_{0} < \cdots < \e_{n},$
 the following equalities hold:
$$ W_{\e}^{\phi}(g_0, \cdots,g_n)= W_{\e}^{\phi}(f_0, \cdots,f_n){\rm det}(A)$$
and
$$ W_{\e}^{\psi}(g_0, \cdots,g_n)= W_{\e}^{\psi}(f_0, \cdots,f_n){\rm det}(A).$$
\end{lemma}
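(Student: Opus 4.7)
The plan is to split the proof into a structural step producing $A$ and two algebraic steps verifying the determinantal identities. For the structural step I will work inside the $(n{+}1)$-dimensional $\k$-vector space $V=\mathrm{span}_{\k}(f_0,\ldots,f_n)\subset\k[[t]]$ and filter it by valuation: set $V_d=\{x\in V : v(x)\geq d\}$. Because $v(\lambda x)=v(x)$ for $\lambda\in\k^{*}$, the leading-coefficient map gives a $\k$-linear injection $V_d/V_{d+1}\hookrightarrow\k$, so each successive quotient is at most one-dimensional. A dimension count then forces exactly $n{+}1$ integers $d_0<d_1<\cdots<d_n$ at which the quotient is one-dimensional. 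Choosing any $g_i\in V_{d_i}\setminus V_{d_i+1}$ yields $n{+}1$ elements with pairwise disjoint orders $v(g_i)=d_i$; these are $\k$-linearly independent and, by dimension count, form a basis of $V$. The change-of-basis matrix $A$ defined by $g_j=\sum_{k}f_k A_{kj}$ is then automatically invertible with entries in $\k$.

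For the $\phi$-Wronskian identity I will use that each $\phi_{\e_i}$ is a genuinely $\k$-linear operator on $\k[[t]]$, since its definition in Definition \ref{threeC} is pure coordinate extraction with no Frobenius twist on scalars. Applying $\phi_{\e_i}$ to $g_j=\sum_{k}A_{kj}f_k$ gives $\phi_{\e_i}(g_j)=\sum_{k}A_{kj}\phi_{\e_i}(f_k)$, which is exactly the $(i,j)$-entry of the product $M_{\phi}(f)\cdot A$, where $M_{\phi}(f)_{i,k}=\phi_{\e_i}(f_k)$. Taking determinants of the matrix identity $M_{\phi}(g)=M_{\phi}(f)\cdot A$ yields the stated identity $W^{\phi}_{\e}(g_0,\ldots,g_n)=W^{\phi}_{\e}(f_0,\ldots,f_n)\det(A)$ at once.

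The $\psi$-identity is where I expect the principal obstacle. By Definition \ref{threeC}, $\psi_n$ carries an inverse-Frobenius $\lambda\mapsto\lambda^{1/q^{k}}$ on scalars, so on $\k[[t]]$ it is only $\k$-semilinear: $\psi_{\e_i}(\lambda x)=\lambda^{1/q^{k_i}}\psi_{\e_i}(x)$, where $q^{k_i}$ is the level attached to $\e_i$. A direct imitation of the $\phi$-calculation produces $\psi_{\e_i}(g_j)=\sum_{k}A_{kj}^{1/q^{k_i}}\psi_{\e_i}(f_k)$, and the row-dependent Frobenius twists $A_{kj}^{1/q^{k_i}}$ do not collect into a single matrix factor on the right. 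My intended route to recover the identity with the exact factor $\det(A)$ is to combine two ingredients: first, the relation $\phi_n=\psi_n^{q^{k}}$ from (\ref{id1R}), which permits passage from the already established $\phi$-Wronskian identity to its $\psi$-counterpart by extracting appropriate Frobenius roots, using that $\det$ commutes with Frobenius in characteristic $p$; and second, the perfectness of $\k$, which guarantees that the required roots lie in $\k$ and can be applied consistently inside the determinant. The delicate technical point, and the main obstacle, is to verify that when the bookkeeping is carried out level by level across the $\e_i$, the accumulated Frobenius images collapse to $\det(A)$ itself rather than to some nontrivial Frobenius power of it; this will need to be checked by grouping the rows by the common value of $q^{k_i}$ and exploiting the multiplicativity of the determinant under block operations.
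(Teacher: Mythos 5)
Your structural step (the valuation filtration of $V=\mathrm{span}_{\k}(f_0,\ldots,f_n)$, with one-dimensional graded pieces and exactly $n+1$ jumps) and your $\phi$-argument are both correct, and together they reproduce what the paper's own proof contains: the paper gives no argument at all, citing only \cite[Lemma 2]{BD}, whose content is precisely your construction of $A$ plus the determinant identity for \emph{linear} operators. Since each $\phi_{\e_i}$ is genuinely $\k$-linear, your matrix identity $M_{\phi}(g)=M_{\phi}(f)A$ and the resulting factor $\det(A)$ are a complete proof of the $\phi$-half.

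The obstacle you flag in the $\psi$-case is not a technicality left to check: it is fatal, because the $\psi$-equality in Lemma \ref{WL1} is false as stated, so no level-by-level bookkeeping of Frobenius roots can collapse to $\det(A)$. Concretely, take $p=q=2$, $\k=\F_4$ with $\omega^2=\omega+1$, $f_0=1+t$, $f_1=\omega+t$ (linearly independent over $\k$, both of order $0$), and $\e=(0,1)$. The construction yields $g_0=f_0$ and $g_1=f_1+\omega f_0=\omega^2t$, i.e. $A=\begin{pmatrix}1&\omega\\0&1\end{pmatrix}$ with $\det(A)=1$. Here $\psi_0=\mathrm{id}$ and $\psi_1(\sum_i x_it^i)=\sum_i x_{2i+1}^{1/2}t^i$, so $W_{\e}^{\psi}(f_0,f_1)=(1+t)-(\omega+t)=\omega^2$, whereas $W_{\e}^{\psi}(g_0,g_1)=(1+t)\omega-\omega^2t=\omega+t\neq\omega^2=W_{\e}^{\psi}(f_0,f_1)\det(A)$; meanwhile the $\phi$-identity checks out in the same example, as it must. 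Your proposed rescue via $\phi_n=\psi_n^{q^k}$ fails for exactly the reason you suspected: passing from $(\psi_{\e_i}(x_j))$ to $(\phi_{\e_i}(x_j))$ raises row $i$ entrywise to the power $q^{k_i}$, and the determinant commutes only with a single field automorphism applied to \emph{all} entries, not with row-dependent powers; since $\e_0<\cdots<\e_n$ forces distinct levels $k_i$ as soon as the $\e_i$ straddle a power of $q$ (e.g. $\e_0=0$), the twists cannot be made uniform. Even in the uniform-level case, semilinearity gives $\psi_{\e_i}(g_j)=\sum_l A_{lj}^{1/q^{k}}\psi_{\e_i}(f_l)$ and hence the factor $(\det A)^{1/q^{k}}$, not $\det(A)$. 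So the citation of \cite[Lemma 2]{BD} legitimately covers only the $\phi$-half; the $\psi$-half of the lemma is an error in the paper itself (note also that its hypothesis ``arbitrary characteristic'' is incompatible with $\psi_n$ even being defined), and what the downstream applications (Lemma \ref{WL2} and the equivalence (1)$\Leftrightarrow$(3) of Theorem \ref{Wcar}) actually require is at most a nonvanishing transfer $W_{\e}^{\psi}(g)\neq0\Rightarrow W_{\e}^{\psi}(f)\neq0$, which would need a separate semilinear argument rather than the clean determinant identity.
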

\begin{proof}
See \cite[Lemma 2]{BD}.
\end{proof}

\begin{lemma}\label{WL2}
Let $\k$ be a field of arbitrary characteristic. If the nonzero power series $g_0, \cdots g_n$ in $\k[[T]]$ have orders $\e=(\e_0, \cdots,\e_n)$ such that the orders $\e_i$ of $g_i$'s are arranged in increasing order, then
$W_{\e}^{\phi}(g_0, \cdots,g_n) \not =0$ and $W_{\e}^{\psi}(g_0, \cdots,g_n) \not =0.$
\end{lemma}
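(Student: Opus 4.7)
The plan is to reduce the nonvanishing of each Wronskian to a triangular-matrix computation, by extracting the constant term in $t$ of every entry of the matrix and exploiting the strict ordering $\e_0 < \e_1 < \cdots < \e_n$ together with the fact that $g_j$ has order exactly $\e_j$.

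First I would write $g_j = c_j t^{\e_j} + \sum_{m > \e_j} a_m^{(j)} t^m$ with $c_j \neq 0$, using $a_m^{(j)}$ for the coefficient of $t^m$ in $g_j$, so that $a_{\e_j}^{(j)} = c_j$ and $a_m^{(j)} = 0$ whenever $m < \e_j$. For each index $i$ let $k_i$ be the integer determined by $q^{k_i-1} \leq \e_i < q^{k_i}$ (with $k_0 = 0$ when $\e_0 = 0$). Reading the constant term directly from Definition \ref{threeC}, one obtains
\[
\phi_{\e_i}(g_j)\big|_{t=0} = a_{\e_i}^{(j)}, \qquad \psi_{\e_i}(g_j)\big|_{t=0} = \bigl(a_{\e_i}^{(j)}\bigr)^{1/q^{k_i}},
\]
the $q^{k_i}$-th root being defined because $\k$ is perfect.

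Next I would assemble the two scalar matrices $M^\phi = (a_{\e_i}^{(j)})_{0 \leq i,j \leq n}$ and $M^\psi = (\,(a_{\e_i}^{(j)})^{1/q^{k_i}}\,)_{0 \leq i,j \leq n}$ over $\k$. Whenever $i < j$ the strict ordering gives $\e_i < \e_j$, forcing $a_{\e_i}^{(j)} = 0$; along the diagonal, $a_{\e_j}^{(j)} = c_j \neq 0$. Thus both matrices are lower triangular with nonzero diagonal, and $\det M^\phi = \prod_{j=0}^{n} c_j$, $\det M^\psi = \prod_{j=0}^{n} c_j^{1/q^{k_j}}$ are both nonzero.

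Finally I would invoke the elementary fact that for any square matrix $M(t)$ with entries in $\k[[t]]$, $\det(M(t))\big|_{t=0} = \det(M(0))$. Applied to the Wronskian matrices $(\phi_{\e_i}(g_j))$ and $(\psi_{\e_i}(g_j))$, this identifies $W_\e^\phi(g_0, \ldots, g_n)$ and $W_\e^\psi(g_0, \ldots, g_n)$ as power series in $t$ whose constant terms equal the nonzero scalars $\det M^\phi$ and $\det M^\psi$ respectively; hence neither Wronskian vanishes. There is no genuinely hard step; the whole point is the transparent constant-term formulas for $\phi_n$ and $\psi_n$, which combine with the triangular structure forced by the ordering of the $\e_i$ to give the conclusion immediately.
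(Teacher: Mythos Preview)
Your argument is correct and is precisely the approach the paper has in mind: the paper's own proof consists only of the remark ``It follows by mimicking the proof of \cite[Lemma 3]{BD},'' and that lemma is exactly the constant-term/lower-triangular computation you carry out, transported from Hasse derivatives to $\phi_n$ and $\psi_n$. Your remark that perfectness of $\k$ is needed for the $\psi$-case (despite the lemma's phrase ``arbitrary characteristic'') is also apt, since $\psi_n$ is only defined under that hypothesis in Section~5.
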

\begin{proof}
It follows by mimicking the proof of \cite[Lemma 3]{BD}.
\end{proof}

\noindent{\bf Second proof of (1) $\Leftrightarrow$ (2) in Theorem \ref{Wcar}.}

\begin{proof}
We merely prove the "only if" part because the "if" part is obvious.

Case 1, in which $x_i$'s are in $R=\k[[T]].$
Given $\k$-linearly independent vectors $x_i( 0\leq i \leq n),$ by Lemmas \ref{WL1} and \ref{WL2}, we can accept $\e_i$ as the order of $g_i$ such that
\begin{eqnarray*}
 0 &\leq& \e_0 < \e_1 \cdots <\e_n,\\
g_i &=&\sum_{j=0}^{n}a_{ji}x_j~~ {\rm with}~~ {\rm det}(a_{ji})\not =0.\\
\end{eqnarray*}
As $W_{\e}^{\phi}(g_0, \cdots,g_n) \not =0$,  we deduce $W_{\e}^{\phi}(x_0, \cdots,x_n) \not =0$ from the above equation.

Case 2, in which the $x_i$'s  are in $K.$
Then, there exists a positive integer $r$ such that $t^r x_i( 0\leq i \leq n)$ are in $R.$
Note that $t^r x_i \in R $ are linearly independent over $\k$ if and only if
this is also true for $x_i \in K.$ By Case 1,
there exists a vector $\e=(\e_0, \cdots,\e_n )$ with $ 0 \leq \e_0 < \e_1 \cdots <\e_n,$ and an invertible  matrix $A$ such that
 $$W_{\e}^{\phi}(g_0, \cdots,g_n)= W_{\e}^{\phi}(t^rx_0, \cdots, t^rx_n)det(A).$$
If we choose a $p$th power $r=p^l$  such that $r >e_n$, then  we have
$$W_{\e}^{\phi}(t^rx_0, \cdots, t^rx_n) = t^{r(n+1)}W_{\e}^{\phi}(x_0, \cdots ,x_n).$$
From this equality we deduce $W_{\e}^{\phi}(x_1, \cdots, x_n)\not =0$ as $W_{\e}^{\phi}(g_0, \cdots,g_n) \not =0.$
Thus, the proof of the "only if" part is complete.
\end{proof}

The following result for Cartier operators $\phi_n$ is analogous to the Wronskian criterion associated with
the higher derivatives $\cD_n$, based on the results of Garcia and Voloch in \cite[Theorem 1]{GV}.

\begin{theorem}\label{Wcar2}
Let $x_0, \cdots, x_{n}( n<p^m)$ be elements in $K.$ Then,  $x_0,
\cdots, x_{n}$ are linearly independent over $K_m$  if and only if
there exists a sequence of integers $\e_{i}$ with $
0\leq \e_{0} < \cdots < \e_{n}<p^m$ such that
$ W_{\e}^{\phi}(x_0, \cdots,x_{n}) \not =0.$
\end{theorem}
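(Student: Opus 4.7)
The plan is to deduce Theorem \ref{Wcar2} from the analogous Wronskian criterion for Hasse derivatives (Garcia--Voloch, \cite[Theorem 1]{GV}), using the binomial inversion formula of Theorem \ref{gmain} to transfer between the two types of Wronskian. Two ingredients are needed: (i) the $K_m$-linearity of $\phi_i$ for $i<p^m$, which yields the ``if'' direction cleanly; and (ii) a matrix identity $B=AM^{T}$ relating the full arrays of $\phi$- and $\cD$-values, which yields the ``only if'' direction via the Garcia--Voloch criterion.

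For (i), let $p^{k-1}\leq i<p^k$; then $k\leq m$ and Theorem \ref{gmain}(1) gives
\[
\phi_i=\sum_{r=i}^{p^k-1}\binom{r}{i}(-t)^{r-i}\cD_r.
\]
Each $\cD_r$ with $r<p^m$ is $K_m$-linear, because for $a\in K_m$ the Leibniz rule $\cD_r(ax)=\sum_{j+l=r}\cD_j(a)\cD_l(x)$ collapses to $a\,\cD_r(x)$ (since $\cD_j(a)=0$ for $1\leq j\leq r<p^m$). The scalars $\binom{r}{i}(-t)^{r-i}\in K$ commute with any $a\in K_m$, so $\phi_i$ is $K_m$-linear. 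Thus, given $W^{\phi}_{\e}(x_0,\ldots,x_n)\neq 0$, a hypothetical relation $\sum_j a_j x_j=0$ with $a_j\in K_m$ would force $\sum_j a_j\phi_{\e_i}(x_j)=0$ for each $i$, contradicting the non-vanishing of the Wronskian determinant.

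For (ii), consider the $(n+1)\times p^m$ matrices $A=(\cD_r(x_j))_{j,r}$ and $B=(\phi_r(x_j))_{j,r}$ indexed by $0\leq j\leq n$ and $0\leq r<p^m$. Package the coefficients from Theorem \ref{gmain}(1) into a $p^m\times p^m$ matrix $M$ with $M_{r,s}=\binom{s}{r}(-t)^{s-r}$ for $r\leq s<p^{k_r}$, and $M_{r,s}=0$ otherwise. Then $M$ is upper triangular with diagonal entries $M_{r,r}=1$, hence invertible, and a direct calculation gives $B=AM^{T}$. Consequently $A$ and $B$ have equal $K$-rank. By Garcia--Voloch, $K_m$-linear independence of $x_0,\ldots,x_n$ is equivalent to $A$ having rank $n+1$, which by the matrix identity is equivalent to $B$ having rank $n+1$, i.e.\ to the existence of $\e_0<\cdots<\e_n<p^m$ with $W^{\phi}_{\e}(x_0,\ldots,x_n)\neq 0$.

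The main obstacle is pinning down the structure of $M$: although the sum in Theorem \ref{gmain}(1) truncates at $p^{k_r}-1$ rather than running all the way to $p^m-1$, the extra zeros only sparsify $M$ above the diagonal while preserving $M_{r,r}=1$, so invertibility (and hence the equality of ranks of $A$ and $B$) is unaffected. Careful index bookkeeping (rows vs.\ columns, $M$ vs.\ $M^{T}$) is required, but presents no substantive difficulty.
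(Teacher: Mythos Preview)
Your proposal is correct and follows essentially the same approach as the paper: both directions rely on Garcia--Voloch together with the binomial inversion formula of Theorem~\ref{gmain}, and the ``only if'' direction proceeds via a matrix factorization and a rank argument. The only cosmetic difference is that the paper uses Theorem~\ref{gmain}(2) to factor the $(n+1)\times(n+1)$ Hasse Wronskian as $D=M\Phi$ with a rectangular $M$ and then invokes a rank inequality, whereas you use Theorem~\ref{gmain}(1) to relate the full $(n+1)\times p^m$ arrays by an invertible $p^m\times p^m$ transition matrix, obtaining equality of ranks directly; your bookkeeping is slightly cleaner, but the substance is the same.
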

\begin{proof}
Assuming that $x_0, \cdots, x_{n}$ are linearly dependent over $K_m,$
then there exist $\a_0, \cdots\a_n \in K_m$ not all of which are zero such that
$\sum_{j=0}^{n}\a_jx_j=0.$ Because $K_m=\k K^{p^m}$
we have $\sum_{j=0}^{n}\a_j\phi_{\e_i}(x_j)=$ for all $0\leq e_0, \cdots,\e_n <p^m.$
This proves the "if" part of the theorem.

Conversely, if  $x_0,\cdots, x_{n}$ are linearly independent over $K_m$ then, by \cite[Theorem 1]{GV},
there exists a sequence of integers $\e_{i}$ with $
0\leq \e_{0} < \cdots < \e_{n}<q^m$ such that
$$ W_{\e}^{\cD}(x_0, \cdots,x_{n}):= {\rm det}( \cD_{\e_i}(x_j))_{0 \leq i,j \leq n} \not =0,$$
where $W_{\e}^{\cD}$ represents the Wronskian associated with the higher derivatives.
From identity (2) in Theorem \ref{gmain}, the $(n+1)\times (n+1)$ invertible
matrix $ D:=(\cD_{\e_i}(x_j))_{0 \leq i,j \leq n}$ has the following decomposition:
\begin{eqnarray}\label{Mat}
D =M \Phi,
\end{eqnarray}
where $M=(M_{\e_i,j})$ is a  $(n+1) \times p^m$ matrix whose $(i,j)$ entry is given by formula (2) in Theorem \ref{gmain}
and the $p^m \times (n+1)$ matrix $\Phi =( \phi_{i}(x_j))_{0 \leq i\leq p^m -1, 0\leq j \leq n}.$
Computing the rank of  the transpose of the matrix equality in (\ref{Mat})
yields  $${\rm rank}(D^T) \leq {\rm rank}(\Phi^T),$$
implying $\Phi$ has full rank; equivalently, there exist integers
$\e_{i}$ with $0\leq \e_{0} < \cdots < \e_{n}<p^m$
for which the corresponding row vectors of $\Phi$ are linearly independent over $K.$

\end{proof}
Several remarks are in order.

1. It is of interest to provide the proof of Theorem \ref{Wcar2}, independent of the result of
Garcia and Voloch \cite[Theorem 1]{GV}.

2. It is much simpler and more practical to compute $W_{\e}^{\phi}(x_0, \cdots,x_{n})$ than $ W_{\e}^{\cD}(x_0, \cdots,x_{n}),$
because the calculation of $W_{\e}^{\phi}(x_0, \cdots,x_{n})$ does not concern binomial coefficients.
It is interesting to check that $W_{\e}^{\phi}(x_0, \cdots,x_{n}) \not =0 $ if and only if
$ W_{\e}^{\cD}(x_0, \cdots,x_{n})\not =0 $ for the same integer vector $\e=(\e_0, \cdots,\e_n).$

3. The question arises as to whether there exists a Wronskian criterion associated with Cartier
operators $\psi_n.$


\indent



\end{document}